\title[Constructing non-semisimple modular categories]{Constructing non-semisimple modular categories with 
relative monoidal centers}
\author{Robert Laugwitz}
\address{School of Mathematical Sciences,
University of Nottingham, University Park, Nottingham, NG7 2RD, UK}
\email{robert.laugwitz@nottingham.ac.uk}
\author{Chelsea Walton}
\address{Department of Mathematics, Rice University,
P.O. Box 1892, Houston, TX 77005-1892, USA}
\email{notlaw@rice.edu}
\newcommand{\stkout}[1]{\ifmmode\text{\sout{\ensuremath{#1}}}\else\sout{#1}\fi}
\definecolor{forest}{rgb}{0.0, 0.5, 0.0}
\newcommand \red {\textcolor{red}}
\newcommand{\bijar}[1][]{%
 \ar[#1]
 \ar@<0.7ex>@{}[#1]|-*=0[@]{\sim}} 
\DeclareMathAlphabet{\cal}{OMS}{zplm}{m}{n}
\newcommand{\leftexpsub}[3]{{\vphantom{#3}}^{#1}_{#2}{#3}}
\newcommand{\lYD}[1]{\leftexpsub{#1}{#1}{\mathsf{YD}}}
\newcommand{\Set}[1]{\left\lbrace #1\right\rbrace}
\newcommand{\oop}{\mathrm{op}}
\newcommand{\cop}{\mathrm{cop}}
\newcommand{\ov}[1]{\overline{#1}}
\newcommand{\lmod}[1]{#1\text{-}\mathsf{mod}}
\newcommand{\rmod}[1]{\mathsf{mod}\text{-}#1}
\newcommand{\lcomod}[1]{#1\text{-}\mathsf{comod}}
\newcommand{\rcomod}[1]{\mathsf{comod}\text{-}#1}
\newcommand{\coev}{\mathsf{coev}}
\newcommand{\coevr}{\widetilde{\mathsf{coev}}}
\newcommand{\Drin}{\operatorname{Drin}}
\newcommand{\ev}{\mathsf{ev}}
\newcommand{\evr}{\widetilde{\mathsf{ev}}}
\newcommand{\End}{\operatorname{End}}
\newcommand{\Hom}{\mathsf{Hom}}
\newcommand{\iHom}{\underline{\mathsf{Hom}}}
\newcommand{\ide}{\mathsf{Id}}
\newcommand{\isomorph}{\stackrel{\sim}{\to}}
\newcommand{\Mat}{\operatorname{Mat}}
\newcommand{\Ob}{\mathsf{Ob}}
\newcommand{\one}{\mathds{1}}
\newcommand{\triv}{\mathrm{triv}}
\newcommand{\Alg}{\mathsf{Alg}}
\newcommand{\Bialg}{\mathsf{Bialg}}
\newcommand{\Coalg}{\mathsf{Coalg}}
\newcommand{\FPdim}{\mathsf{FPdim}}
\newcommand{\HopfAlg}{\mathsf{HopfAlg}}
\newcommand{\Vect}{\mathsf{vect}_\Bbbk}
\newcommand{\BB}{\mathfrak{B}}
\newcommand{\TT}{\mathfrak{T}}
\newcommand{\II}{\mathfrak{I}}
\providecommand{\fr}[1]{\mathfrak{#1}}
\newcommand{\mZ}{\mathbb{Z}}
\newcommand{\cA}{\cal{A}}
\newcommand{\cC}{\cal{C}}
\newcommand{\cD}{\cal{D}}
\newcommand{\cB}{\cal{B}}
\newcommand{\cE}{\cal{E}}
\newcommand{\cF}{\cal{F}}
\newcommand{\cT}{\cal{T}}
\newcommand{\cM}{\cal{M}}
\newcommand{\cS}{\cal{S}}
\newcommand{\cZ}{\cal{Z}}
\newcommand{\sfC}{\mathsf{C}}
\newcommand{\rT}{\mathrm{T}}
\newcommand{\qs}{\mathbf{q}}
\newtheoremstyle{defstyle}
  {0.5cm}                   
  {0.5cm}                   
  {\normalfont}           
  {}     
  {\normalfont\bfseries}  
  {:}                     
  {0.3cm}              
  {\thmname{#1}\thmnumber{ #2}\thmnote{ (#3)}}
\numberwithin{equation}{section}
\newtheorem*{rep@theorem}{\rep@title}
\newcommand{\newreptheorem}[2]{%
\newenvironment{rep#1}[1]{%
 \def\rep@title{#2 \ref{##1}}%
 \begin{rep@theorem}}%
 {\end{rep@theorem}}}
\newtheorem{theorem}{Theorem}[section]
\newtheorem{proposition}[theorem]{Proposition}
\newtheorem{corollary}[theorem]{Corollary}
\newtheorem{lemma}[theorem]{Lemma}
\newtheorem{theorem*}{Theorem}
\theoremstyle{definition}
\newtheorem{definition}[theorem]{Definition}
\newtheorem{notation}[theorem]{Notation}
\newtheorem{example}[theorem]{Example}
\newtheorem{remark}[theorem]{Remark}
\newtheorem{question}[theorem]{Question}
\let\c@equation\c@theorem  
\numberwithin{equation}{section}
\subjclass[2020]{18M20, 18M15, 17B37}
\keywords{braided Drinfeld double, modular tensor category, M\"{u}ger centralizer, Nichols algebra relative monoidal center, small quantum group}
\begin{document}

\maketitle

\begin{abstract}
This paper is a contribution to the construction of non-semisimple modular categories. We establish when M\"uger centralizers inside non-semisimple modular categories are also modular. 
 As a consequence, we obtain conditions under which relative monoidal centers give (non-semisimple) modular categories, and we also show that examples include representation categories of small quantum groups. We further derive conditions under which representations of more general quantum groups, braided Drinfeld doubles of Nichols algebras of diagonal type, give (non-semisimple) modular categories.
\end{abstract}



\section{Introduction}\label{sec:intro}
The purpose of this article is to establish new constructions of modular tensor categories in the non-semisimple setting. We work over an algebraically closed field $\Bbbk$.

\smallskip

To begin, let us recall the main structure of interest in this work, which is due to Kerler--Lyubashenko \cite{KL}. We refer the reader to Section~\ref{sec:monoidal} for a discussion of various types of tensor categories relevant here. Take $\Vect$ to be the tensor category of finite-dimensional $\Bbbk$-vector spaces, and for a braided tensor category $\cC$, let $\cC'$ be the {\it M\"{u}ger center} of $\cC$ (see \eqref{eq:Mugercenter}). 

\begin{definition}[Definitions~\ref{def:nondeg}, \ref{def:modular}]
Take $\cC$ a braided finite tensor category. We call $\cC$ a {\it modular tensor category (MTC)} if $\cC$ is non-degenerate (i.e., $\cC' \simeq \Vect$) and ribbon.
\end{definition}

Note that the definition above does not require semisimplicity, as the commonly used definition of an MTC (see, e.g., \cite{BK}). MTCs provide actions of the modular group though their \emph{modular data}, the $S$- and $T$-matrices, a structure that emerged from mathematical physics \cite{MoS}. Semisimple MTCs have appeared in various fields such as low-dimensional topology \cite{Turaev-1992}, conformal field theory \cite{MoS,Hua,Gannon}, and subfactor theory \cite{KLM-2001}; they have been under intense investigation towards classification results by rank (see, e.g., \cite{RSW}). 

The definition of a non-semisimple MTC of \cite{KL} has been given further justification through equivalent characterizations in \cite{Shi1}. Moreover, non-semisimple MTCs are gaining traction due to their growing list of applications, starting with non-semisimple topological quantum field theories \cite{KL}, most recently in \cite{DGGPR}, to the study of logarithmic conformal field theories \cite{HLZ}, modular functors \cites{FSS}, and mapping class group actions \cites{LMSS}. Some module categories of small quantum groups (and of related quasi-Hopf algebras) have been shown to yield examples of non-semisimple MTCs \cites{Lusztig,GLO,LO, Negron}. But, in general, non-semisimple MTCs are not well-understood via classification nor examples, and we aim to contribute to the latter in this work.


\smallskip

One of the main examples of an MTC in the semisimple setting is the monoidal center $\cZ(\cC)$ [Section~\ref{sec:braided}] of a trace-spherical tensor category $\cC$ [Section~\ref{sec:rigid}] (often referred to as a spherical category) \cite{BW}. We discuss in Section~\ref{sec:spherical} a non-semisimple generalization of this result, due to Shimizu, which we will employ later in  our main results. 
First, we recall the  set of square roots  of the Radford isomorphism of a finite tensor category, denoted by $\mathsf{Sqrt}_\cC(D,\xi_{D})$ here [Definition~\ref{def:sqrtset}] that Shimizu uses to parameterize  ribbon structures for  $\cZ(\cC)$ [Theorem~\ref{thm:Shimizuribbon}]. This recovers a result Kauffman--Radford for the ribbonality of the Drinfeld double [Theorem~\ref{thm:KR}]; see Proposition~\ref{prop:KRvsShi} for the explicit connection between these theorems. Next, we recall Douglas--Schommer-Pries--Snyder's notion of sphericality \cite{DSS} in the non-semisimple setting [Definition~\ref{def:spherical}].
By \cite{Shi2}, this notion of sphericality implies that $\mathsf{Sqrt}_\cC(D,\xi_{D})$ is non-empty [Remark~\ref{rem:sph-rib}]. We provide equivalent conditions for the representation category of a Hopf algebra to be spherical [Proposition~\ref{prop:Hspherical}]. Moreover, we obtain the following result.

\begin{proposition}[Proposition~\ref{prop:ribbonspherical}]
Any unimodular finite ribbon category is spherical in the sense of \cite{DSS}.
 \end{proposition}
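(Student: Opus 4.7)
The plan is to verify the DSS-sphericality condition (Definition~\ref{def:spherical}) directly. Recall that, for a pivotal finite tensor category $\cC$ with pivotal structure $j$, DSS-sphericality asks for an isomorphism $g \colon \one \to D$ such that the natural monoidal isomorphism $R^{-1} \circ j^2 \colon \ide \to D \otimes (-) \otimes D^{-1}$ coincides with inner conjugation by $g$; here $j^2 \colon \ide \to (-)^{****}$ is the square of the pivotal structure and $R$ is the Radford isomorphism.

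The ribbon hypothesis supplies the required pivotal structure: writing $u$ for the Drinfeld morphism built from the braiding and $\theta$ for the ribbon twist, the assignment $j_X := u_X \circ \theta_X^{-1}$ yields a monoidal natural isomorphism $\ide \natisomorph (-)^{**}$, the ribbon axiom $\theta_{X^*} = (\theta_X)^*$ being precisely what is needed for monoidality. The unimodular hypothesis supplies a trivialization of the distinguished invertible object: pick any isomorphism $g \colon \one \to D$.

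It remains to verify the DSS-compatibility of the pair $(j,g)$. The comparison
\[
N \;:=\; R^{-1} \circ j^2 \colon \ide \longrightarrow D \otimes (-) \otimes D^{-1}
\]
is a monoidal natural isomorphism between two monoidal endofunctors, and so is determined by its unit component together with monoidality. Using the explicit description of $j$ in terms of $u$ and $\theta$, together with the explicit description of $R$ (as in Kauffman--Radford, cf.\ Theorem~\ref{thm:KR}), the braiding/twist relations of a ribbon category collapse the four-fold duality and force $N$ to equal inner conjugation by its value at $\one$, which one may identify (after possibly rescaling the chosen $g$) with $g \otimes (-) \otimes g^{-1}$. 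The main obstacle lies in this last identification: pushing the explicit formulas through cleanly. I would either carry it out directly in the ribbon graphical calculus, or---more efficiently---invoke Shimizu's reformulation in \cite{Shi2} of DSS-sphericality as the compatibility between $j^2$ and the Radford isomorphism, after which the check reduces to the standard ribbon identities together with the triviality of $D$.
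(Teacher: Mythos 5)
Your overall plan is the right one and matches the paper's strategy: take the pivotal structure $j$ built from the Drinfeld morphism and the twist, use unimodularity to trivialize $D$, and verify the commuting square of Definition~\ref{def:spherical} with $\nu = \ide$. However, you stop exactly at the point where the actual content of the proof lives, and the bridging argument you gesture at is not sound.

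The claim that the monoidal natural isomorphism $N := R^{-1}\circ j^2$ ``is determined by its unit component together with monoidality'' is false in general. Monoidal natural automorphisms of the identity functor of a finite tensor category form a (possibly nontrivial) group --- for $\Rep(G)$ they are the center of $G$ --- so knowing $N_\one$ does not pin down $N$. You would need an additional argument, and indeed the paper supplies one: a direct diagrammatic computation using the ribbon axioms $\theta_{X^*}=\theta_X^*$, $\theta_{X\otimes Y}=(\theta_X\otimes\theta_Y)c_{Y,X}c_{X,Y}$, naturality of $\theta$, and $\theta_\one = \ide$, which shows
\[
j_X^{**}\,j_X \;=\; (\phi_{X^*}^*)^{-1}\,\phi_X
\]
for the Drinfeld isomorphism $\phi$. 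This is then matched against the \emph{explicit} formula $\xi_D(X) = (\phi_{X^*}^*)^{-1}\phi_X$ valid in the unimodular case, which is \cite{Shi2}*{Theorem~A.6} (cf.\ \cite{EGNO}*{Theorem~8.10.7}). Your alternative ``invoke Shimizu's reformulation \dots after which the check reduces to standard ribbon identities'' is not an independent route: Shimizu's reformulation \emph{is} Definition~\ref{def:spherical}, and ``the standard ribbon identities'' that remain are precisely the chain of equalities above. In short: the approach is correct, but the two load-bearing ingredients --- the ribbon computation reducing $j^{**}j$ to a double Drinfeld isomorphism, and the explicit unimodular formula for the Radford isomorphism --- are both absent, and the principle you propose to substitute for them does not hold.

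As a minor point, check your convention for the ribbon pivotal structure: the paper uses $j_X = \phi_X\,\theta_X$ (equation~\eqref{eq:ribbonpivotal}), not $j_X = u_X\,\theta_X^{-1}$. These may agree under a different normalization of the Drinfeld morphism, but the sign of the twist matters once you start composing with the Radford isomorphism, so it is worth pinning down.
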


Shimizu's results from \cite{Shi1} on the modularity of $\cZ(\cC)$ are recalled in Theorem~\ref{thm:Shimizucenter} and Corollary~\ref{cor:Shimizucenter}--- the hypotheses are, respectively, that $\mathsf{Sqrt}_\cC(D,\xi_{D})$ is non-empty and  that $\cC$ is spherical (as in \cite{DSS}).

\smallskip

We build on the results on $\cZ(\cC)$ above to obtain Theorem~\ref{thm:ZBCmodular-intro} below on the modularity of relative monoidal centers. Before this, we establish a general result on the modularity of M\"uger centralizer, Theorem~\ref{thm:centmodular-intro}. These are the main results of the paper, presented in Section~\ref{sec:ZBC-modular}.

\smallskip

To proceed, note that a full subcategory of a category is called \emph{topologizing} if it is closed under finite direct sums and subquotients \cites{Ros, Shi1}. Moreover, recall the notion of the  {\it M\"{u}ger centralizer} of a subset of objects in a braided category; see \eqref{eq:centralizer}.

\begin{theorem}[Theorem~\ref{thm:centmodular}] \label{thm:centmodular-intro}
Let $\cD$ be a modular category, let $\cE$ be a topologizing braided tensor subcategory of $\cD$, and consider the  M\"{u}ger centralizer ${\sf C}_{\cD}(\cE)$. Then,  $$\sfC_{\cD}(\cE)' \simeq \cE'.$$ As a consequence, $\sfC_{\cD}(\cE)$ is modular if and only if $\cE$ is modular.
\end{theorem}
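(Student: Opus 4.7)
The plan is to exploit the double centralizer theorem of Shimizu, which holds for topologizing braided tensor subcategories of a non-degenerate finite braided tensor category, and then identify $\sfC_{\cD}(\cE)'$ with an intersection inside $\cD$.

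First, I would record the structural properties of $\sfC_{\cD}(\cE)$. By the hexagon axioms, the class of objects in $\cD$ that centralize all of $\cE$ is closed under tensor products and contains the unit, so $\sfC_{\cD}(\cE)$ is a braided tensor subcategory of $\cD$. Using naturality of the braiding and the fact that an arrow $f\colon X\twoheadrightarrow Z$ (resp.\ $f\colon Z\hookrightarrow X$) satisfies $(f\otimes \id_Y)\circ c_{Y,X}\circ c_{X,Y} = c_{Y,Z}\circ c_{Z,Y}\circ(f\otimes \id_Y)$ for every $Y\in\cE$, one checks that $\sfC_{\cD}(\cE)$ is closed under subquotients, hence topologizing; in particular it is a finite tensor subcategory of $\cD$. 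Moreover, $\sfC_{\cD}(\cE)$ inherits the ribbon twist of $\cD$, and the same is true of $\cE$ by hypothesis. Thus both categories are a priori ribbon finite braided tensor categories, and the question of their modularity reduces to non-degeneracy of their M\"uger centers.

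The key step is then to apply Shimizu's double centralizer theorem \cite{Shi1}: since $\cD$ is modular (in particular non-degenerate) and $\cE$ is topologizing, one has $\sfC_{\cD}(\sfC_{\cD}(\cE)) \simeq \cE$. Using this, I would compute
\[
\sfC_{\cD}(\cE)' \;=\; \sfC_{\sfC_{\cD}(\cE)}\bigl(\sfC_{\cD}(\cE)\bigr) \;=\; \sfC_{\cD}(\cE)\cap \sfC_{\cD}\bigl(\sfC_{\cD}(\cE)\bigr) \;=\; \sfC_{\cD}(\cE)\cap \cE.
\]
The final identification $\sfC_{\cD}(\cE)\cap\cE \simeq \cE'$ is immediate: because $\cE\hookrightarrow\cD$ is braided, an object $X\in\cE$ satisfies $c_{Y,X}\circ c_{X,Y}=\id$ for all $Y\in\cE$ computed in $\cE$ if and only if the same identity holds computed in $\cD$. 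This gives the stated isomorphism $\sfC_{\cD}(\cE)'\simeq \cE'$.

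The modularity consequence then follows at once. By the previous paragraph and the discussion of ribbonality above, $\sfC_{\cD}(\cE)$ is modular if and only if $\sfC_{\cD}(\cE)'\simeq \Vect$, if and only if $\cE'\simeq \Vect$, if and only if $\cE$ is modular. I expect the main obstacle to be the careful verification that Shimizu's double centralizer theorem can be applied in this formulation (namely, that $\cD$ being non-degenerate and $\cE$ topologizing are exactly the hypotheses, with no extra sphericality or semisimplicity required) and the verification that $\sfC_{\cD}(\cE)$ itself is topologizing so that it qualifies as a finite tensor subcategory; the rest of the argument is purely formal manipulation of the centralizer operation.
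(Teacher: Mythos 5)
Your proposal is correct and follows essentially the same strategy as the paper: both hinge on Shimizu's double centralizer theorem ($\sfC_{\cD}(\sfC_{\cD}(\cE))\simeq\cE$ for a topologizing braided tensor subcategory $\cE$ of a non-degenerate $\cD$), together with the observation that ribbonality passes to full braided tensor subcategories, so modularity reduces to the Müger center being trivial. The paper argues the two inclusions $\cE'\subseteq\sfC_{\cD}(\cE)'$ and $\sfC_{\cD}(\cE)'\subseteq\cE'$ separately, while you package them into a single chain of identifications $\sfC_{\cD}(\cE)'=\sfC_{\cD}(\cE)\cap\sfC_{\cD}(\sfC_{\cD}(\cE))=\sfC_{\cD}(\cE)\cap\cE=\cE'$; this is a cosmetic rather than a substantive difference.
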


This generalizes a result of M\"{u}ger in the semisimple case \cite{Mue}*{Corollary~3.5}.

\smallskip

Next we discuss a special case of M\"{u}ger centralizers: relative monoidal centers. Take a braided category $(\cB, \;\psi_{X,Y}\colon X \otimes Y \overset{\sim}{\to} Y \otimes X)$  with braided opposite $\overline{\cB}:=(\cB, \;\psi^{-1}_{Y,X}\colon X \otimes Y \overset{\sim}{\to} Y \otimes X)$. We say that a monoidal category $\cC$ is {\it $\cB$-central} if there exists a faithful braided monoidal functor $G\colon  \overline{\cB} \to \cZ(\cC)$ [Definition~\ref{def:Bcentral}]. With such a category $\cC$, one can form the {\it relative monoidal center} $\cZ_{\cB}(\cC)$, which is a full subcategory of $\cZ(\cC)$ consisting of objects that centralize (via the braiding of $\cZ(\cC)$) all objects in the image of $G$ [Definition~\ref{def:relcenter}]. In fact, $\cZ_{\cB}(\cC)$ is equal to the M\"{u}ger centralizer ${\sf C}_{\cZ(\cC)}(G(\overline{\cB}))$ [Remark~\ref{rem:ZBC}]. Given Theorem~\ref{thm:centmodular-intro}, we obtain the following result.

\begin{theorem} 
[Theorem~\ref{thm:ZBCmodular}] \label{thm:ZBCmodular-intro}
 Let $\cB$ be a non-degenerate braided finite tensor category, and $\cC$ a $\cB$-central 
 finite tensor category so that the set   $\mathsf{Sqrt}_\cC(D,\xi_{D})$ from Definition \ref{def:sqrtset} is non-empty. If the full image $G(\ov{\cB})$ is a topologizing subcategory of $\cZ(\cC)$, then the relative monoidal center $\cZ_\cB(\cC)$ is modular.
\end{theorem}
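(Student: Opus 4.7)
The plan is to realize $\cZ_\cB(\cC)$ as the M\"uger centralizer $\sfC_{\cZ(\cC)}(G(\ov{\cB}))$ (per Remark~\ref{rem:ZBC}) and then apply Theorem~\ref{thm:centmodular-intro} with $\cD := \cZ(\cC)$ and $\cE := G(\ov{\cB})$. First I would verify the hypotheses of that theorem: the ambient category $\cZ(\cC)$ is modular by Shimizu's Theorem~\ref{thm:Shimizucenter}, whose assumption—the non-emptiness of $\mathsf{Sqrt}_\cC(D,\xi_{D})$—is built into our hypotheses; and $G(\ov{\cB})$ is topologizing by assumption, and is a braided tensor subcategory of $\cZ(\cC)$ because $G$ is a faithful braided monoidal functor (so the full image inherits the braided tensor structure of $\cZ(\cC)$).

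With these inputs, Theorem~\ref{thm:centmodular-intro} yields
\[
\cZ_\cB(\cC)' \;=\; \sfC_{\cZ(\cC)}(G(\ov{\cB}))' \;\simeq\; G(\ov{\cB})'.
\]
To deduce non-degeneracy of $\cZ_\cB(\cC)$, I would now show that $G(\ov{\cB})' \simeq \Vect$. Given $X \in G(\ov{\cB})'$, choose $Y \in \ov{\cB}$ with $G(Y) \cong X$. For any $Z \in \ov{\cB}$, the double braiding in $\cZ(\cC)$ satisfies $\psi_{G(Y),G(Z)} \psi_{G(Z),G(Y)} = \ide_{G(Y) \otimes G(Z)}$. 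Because $G$ is braided monoidal, this double braiding coincides (up to the tensor coherence isomorphisms of $G$) with $G(\psi_{Y,Z}\psi_{Z,Y})$. Faithfulness of $G$ therefore forces $\psi_{Y,Z}\psi_{Z,Y} = \ide_{Y \otimes Z}$ for every $Z \in \ov{\cB}$, so $Y \in \ov{\cB}'$. Since $\cB$ (and hence $\ov{\cB}$) is non-degenerate, $Y$ is a direct sum of copies of $\one$, whence so is $X$, proving $G(\ov{\cB})' \simeq \Vect$.

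It remains to produce a ribbon structure on $\cZ_\cB(\cC)$. Since $\cZ(\cC)$ is modular, and hence ribbon, and $\cZ_\cB(\cC)$ is a M\"uger centralizer in $\cZ(\cC)$, it is closed under the braiding, tensor product, and duals inherited from $\cZ(\cC)$. Standard arguments then show that the twist of $\cZ(\cC)$ restricts to a twist on $\cZ_\cB(\cC)$ satisfying the ribbon axiom. Combined with the non-degeneracy established above, this proves $\cZ_\cB(\cC)$ is a modular tensor category.

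The main obstacle I anticipate is the step transferring non-degeneracy from $\ov{\cB}$ to the full image $G(\ov{\cB})$: one must match the double braidings computed in $\cZ(\cC)$ against those in $\ov{\cB}$ through the braided monoidal constraints of $G$, and then carefully use the faithfulness of $G$ to reflect the centralizing condition back to $\ov{\cB}$. The other verifications—namely that $G(\ov{\cB})$ is a braided tensor subcategory and that the ribbon twist restricts—are comparatively routine formal consequences of $G$ being a faithful braided monoidal functor and of the closure properties of M\"uger centralizers.
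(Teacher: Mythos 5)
Your proof is correct and takes essentially the same route as the paper: identify $\cZ_\cB(\cC)$ with the M\"uger centralizer $\sfC_{\cZ(\cC)}(G(\ov{\cB}))$ via Remark~\ref{rem:ZBC}, invoke Theorem~\ref{thm:Shimizucenter} to get modularity of $\cZ(\cC)$ from the non-emptiness of $\mathsf{Sqrt}_\cC(D,\xi_D)$, observe that $G(\ov{\cB})$ is a topologizing non-degenerate braided tensor subcategory, and conclude with Theorem~\ref{thm:centmodular}. The only difference is that you spell out the reflection argument (transfer of the centralizing condition back along the faithful braided functor $G$) which the paper compresses into the single phrase ``since $G$ is faithful, $G(\ov{\cB})$ is non-degenerate.''
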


For $\cB$ a rigid braided category and $H$ a Hopf algebra in $\cB$, Theorem~\ref{thm:ZBCmodular-intro} can be used to study the modularity of the category of finite-dimensional $H$-Yetter Drinfeld modules in $\cB$ [Example~\ref{expl:br-YD}]. If, further, $\cB$ is a representation category of a quasi-triangular Hopf algebra $K$, then Theorem~\ref{thm:ZBCmodular-intro} can also be used to study the modularity of the representation category of the braided Drinfeld double $\Drin_K(H,H^*)$ [Example~\ref{expl:br-Drin}]. From this, we show, as a first example, that the representation category of the small quantum group  $u_q(\mathfrak{sl}_2)$, for $q$ a root of unity of odd order, is an MTC [Example~\ref{ex:uqsl2}]. Generalizations of this (non-semisimple) MTC will be given in Proposition~\ref{cor:UBqmod-intro} below.

\smallskip

Motivated again by work of M\"{u}ger in the semisimple  case,  we next consider the decomposition of modular tensor categories into Deligne tensor product of modular subcategories. We obtain the result below; cf., \cite{Mue}*{Theorem~4.2}.

\begin{theorem} [Theorem~\ref{thm:decomp}]
Let $\cD$ be a modular tensor category, with a topologizing non-degenerate braided tensor subcategory $\cE$. Then, there is an equivalence of ribbon categories:
$$\cD \simeq \cE \boxtimes {\mathsf C}_\cD(\cE).$$
\end{theorem}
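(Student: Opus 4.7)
The plan is to adapt Müger's approach \cite{Mue}*{Theorem~4.2} to the non-semisimple setting by constructing a ribbon equivalence
\begin{equation*}
F\colon \cE \boxtimes \sfC_\cD(\cE) \longrightarrow \cD, \qquad X \boxtimes Y \longmapsto X \otimes Y.
\end{equation*}
By Theorem~\ref{thm:centmodular-intro} and the non-degeneracy of $\cE$, one has $\sfC_\cD(\cE)' \simeq \cE' \simeq \Vect$, so $\sfC_\cD(\cE)$ is itself non-degenerate; since it is closed under duals and under the restriction of the twist of $\cD$, it is a ribbon subcategory, hence modular. The Deligne product $\cE \boxtimes \sfC_\cD(\cE)$ then inherits a natural braided ribbon finite tensor structure with twist $\theta_X \boxtimes \theta_Y$.

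The tensor constraint on $F$ is built from the braiding of $\cD$ between objects of $\sfC_\cD(\cE)$ and $\cE$:
\begin{equation*}
X\otimes X'\otimes Y\otimes Y'
\xrightarrow{\;\ide_X \,\otimes\, \psi^{-1}_{Y,X'} \,\otimes\, \ide_{Y'}\;}
X\otimes Y\otimes X'\otimes Y',
\end{equation*}
and the hexagon axioms, compatibility of $F$ with braidings, and preservation of twists all reduce to the centralization identity $\psi_{X',Y}\,\psi_{Y,X'}=\ide$ for $X'\in\cE$, $Y\in\sfC_\cD(\cE)$, combined with the ribbon identity $\theta_{A\otimes B} = (\theta_A\otimes\theta_B)\,\psi_{B,A}\,\psi_{A,B}$. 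Thus $F$ is a braided ribbon tensor functor.

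The principal technical obstacle is full faithfulness of $F$. The key input is the triviality of the intersection of tensor subcategories
\begin{equation*}
\cE \cap \sfC_\cD(\cE) \;\simeq\; \cE' \;\simeq\; \Vect,
\end{equation*}
again coming from non-degeneracy of $\cE$ via Theorem~\ref{thm:centmodular-intro}. In the semisimple case \cite{Mue}*{Theorem~4.2}, this triviality together with an orthogonality-of-simples argument closes the case; in the non-semisimple setting a finer categorical argument is required, for instance by identifying $\Hom_\cD(X\otimes Y,\,X'\otimes Y')$ with $\Hom_\cE(X,X')\otimes_\Bbbk\Hom_{\sfC_\cD(\cE)}(Y,Y')$ via internal Hom and coend computations that exploit both the centralization identity and the trivial intersection.

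Essential surjectivity then follows by Frobenius--Perron dimensions. The non-semisimple analogue of Müger's dimension formula, available in the framework of \cite{Shi1}, yields $\FPdim(\cE)\cdot\FPdim(\sfC_\cD(\cE)) = \FPdim(\cD)$, and by construction $\FPdim(\cE\boxtimes\sfC_\cD(\cE)) = \FPdim(\cE)\cdot\FPdim(\sfC_\cD(\cE))$. Since any fully faithful tensor functor between finite tensor categories of equal Frobenius--Perron dimension is an equivalence, $F$ is an equivalence, and the discussion above promotes it to an equivalence of ribbon categories.
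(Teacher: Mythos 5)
Your overall strategy agrees with the paper's: build a tensor functor $F\colon \cE \boxtimes \sfC_\cD(\cE) \to \cD$ from the tensor product of $\cD$ and the centralization braiding as tensor constraint, observe $\cE \cap \sfC_\cD(\cE) \simeq \cE' \simeq \Vect$, and close with a Frobenius--Perron dimension count. The problem is that you have built your argument around proving full faithfulness of $F$, which you explicitly flag as the principal technical obstacle and then do not resolve. The sentence proposing to identify $\Hom_\cD(X\otimes Y, X'\otimes Y')$ with $\Hom_\cE(X,X')\otimes_\Bbbk\Hom_{\sfC_\cD(\cE)}(Y,Y')$ ``via internal Hom and coend computations'' is a plan, not a proof; in the non-semisimple case that identification is exactly the content that would need to be established, and M\"uger's orthogonality-of-simples trick is unavailable.

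The paper avoids this obstacle entirely by arguing via surjectivity rather than full faithfulness. Following the discussion before \cite{Shi1}*{Lemma~4.8}, one lets $\cE \vee \sfC_\cD(\cE)$ denote the closure under subquotients of the image of $T$ (the paper's name for your $F$); then by construction $T\colon \cE \boxtimes \sfC_\cD(\cE) \to \cE \vee \sfC_\cD(\cE)$ is a \emph{surjective} tensor functor. \cite{Shi1}*{Lemma~4.8}, together with $\cE \cap \sfC_\cD(\cE) \simeq \Vect$, gives $\FPdim(\cE \vee \sfC_\cD(\cE)) = \FPdim(\cE)\,\FPdim(\sfC_\cD(\cE)) = \FPdim(\cE \boxtimes \sfC_\cD(\cE))$, and then \cite{EGNO}*{Proposition~6.3.4} (the surjective-plus-equal-FPdim criterion) makes $T$ an equivalence onto $\cE \vee \sfC_\cD(\cE)$ — no faithfulness argument needed. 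It remains to show $\cE \vee \sfC_\cD(\cE) = \cD$, which the paper gets from the double centralizer theorem \cite{Shi1}*{Theorem~4.9}:
\begin{equation*}
\cE \vee \sfC_\cD(\cE) = \sfC_\cD\bigl[\sfC_\cD(\cE \vee \sfC_\cD(\cE))\bigr] = \sfC_\cD\bigl[\sfC_\cD(\cE) \cap \cE\bigr] \simeq \sfC_\cD(\Vect) = \cD.
\end{equation*}
The ribbon/braided compatibility and the preservation of twist you sketch are handled similarly in the paper via \cite{Mue2}*{Proposition~7.7}, so that part of your argument is sound. You should replace the unfinished full-faithfulness step with the surjectivity-plus-FPdim route and the double centralizer computation; as written the proposal has a genuine gap at its central step.
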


In particular, under the conditions of Theorem \ref{thm:ZBCmodular-intro}, the relative monoidal center is related to the monoidal center through the factorization $$\cZ(\cC)\simeq \ov{\cB}\boxtimes \cZ_\cB(\cC).$$
Continuing an example mentioned above, for $H$ a Hopf algebra in the braided tensor category $\lmod{K}$, we have that $\lmod{\Drin(H \rtimes K)} \simeq \lmod{K} \;\boxtimes \; \lmod{\Drin_K(H,H^*)}$ as modular categories under the hypotheses of Theorem~\ref{thm:ZBCmodular-intro}; see Example~\ref{ex:decomp1}(2).

\smallskip

As in \cite{Mue}, we call a modular tensor category $\cC$ in the non-semisimple setting  {\it prime} if every topologizing non-degenerate braided tensor subcategory is equivalent to either $\cC$ or $\Vect$. We obtain the result below as an immediate consequence of the theorem above, cf. \cite{Mue}*{Theorem~4.5}.

\begin{corollary}[Corollary~\ref{cor:primedecomp}]
Every modular tensor category is equivalent to a finite Deligne tensor product of prime modular categories.
\end{corollary}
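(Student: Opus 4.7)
The plan is to proceed by induction on the Frobenius--Perron dimension $\FPdim(\cD)$, which takes real values $\geq 1$ for any finite tensor category and equals $1$ precisely when the category is equivalent to $\Vect$. The base case covers $\cD \simeq \Vect$ (vacuously prime, realized as the empty Deligne product) and more generally any $\cD$ that is itself prime: such a $\cD$ is already a one-factor Deligne product of prime modular categories.

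For the inductive step, assume $\cD$ is not prime. Then by definition there exists a topologizing non-degenerate braided tensor subcategory $\cE \subseteq \cD$ with $\cE \not\simeq \Vect$ and $\cE \not\simeq \cD$. Apply Theorem~\ref{thm:decomp} to obtain an equivalence of ribbon categories
\[
\cD \;\simeq\; \cE \,\boxtimes\, {\mathsf C}_\cD(\cE).
\]
Since $\cE$ is non-degenerate and inherits ribbon structure from $\cD$ through this decomposition, it is modular; by Theorem~\ref{thm:centmodular-intro}, the M\"uger centralizer ${\mathsf C}_\cD(\cE)$ is then also modular. Thus both tensor factors are themselves modular tensor categories to which the inductive hypothesis will be applied.

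The next step is to check that the induction strictly decreases $\FPdim$. Because $\FPdim$ is multiplicative under the Deligne product of finite tensor categories, we have $\FPdim(\cE)\cdot\FPdim({\mathsf C}_\cD(\cE)) = \FPdim(\cD)$. The hypothesis $\cE \not\simeq \Vect$ gives $\FPdim(\cE) > 1$, and the hypothesis $\cE \not\simeq \cD$ together with the decomposition forces ${\mathsf C}_\cD(\cE) \not\simeq \Vect$, hence $\FPdim({\mathsf C}_\cD(\cE)) > 1$ as well. Consequently each factor has strictly smaller Frobenius--Perron dimension than $\cD$, the induction applies to each, and concatenating the resulting prime decompositions gives the desired decomposition of $\cD$.

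The main potential obstacle is simply justifying that the induction terminates, which reduces to the standard fact from the theory of finite tensor categories that a nontrivial finite tensor category has $\FPdim > 1$, together with the multiplicativity of $\FPdim$ under $\boxtimes$. With these in hand, the corollary follows as an immediate consequence of Theorem~\ref{thm:decomp}, as the excerpt remarks.
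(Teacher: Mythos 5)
Your proposal is essentially the argument the paper intends: the corollary is stated as an immediate consequence of Theorem~\ref{thm:decomp} (with no written proof), and the inductive peeling-off of a nontrivial proper non-degenerate subcategory via the decomposition $\cD \simeq \cE \boxtimes \mathsf{C}_\cD(\cE)$ is exactly the right way to make ``immediate'' precise.

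One small imprecision worth fixing: for termination you invoke only that a nontrivial finite tensor category has $\FPdim > 1$. Since $\FPdim$ is real-valued, strict decrease alone does not rule out an infinite descent; you need $\FPdim$ to be bounded away from $1$ on nontrivial categories. This does hold --- a nontrivial finite tensor category has at least two simple objects, and since $\FPdim(X_i)\FPdim(P_i) \geq 1$ for each simple $X_i$ one gets $\FPdim(\cC) \geq 2$ --- so each factor has $\FPdim$ at most half of $\FPdim(\cD)$ and the recursion bottoms out after finitely many steps. Alternatively, and perhaps more cleanly, one can induct on the (integer) number of isomorphism classes of simple objects, which is multiplicative under $\boxtimes$ and strictly drops in each nontrivial factor. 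With either fix, the rest of your argument (modularity of $\cE$ via Remark~\ref{rem:modular-subcat}, modularity of $\mathsf{C}_\cD(\cE)$ via Theorem~\ref{thm:centmodular}, and $\mathsf{C}_\cD(\cE) \not\simeq \Vect$ because otherwise $\cD \simeq \cE$) is correct and complete.
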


Although primality is difficult to detect in the semisimple case (see \cite{Mue}*{Section~4}), we inquire when it holds in the non-semisimple case, particularly for  $\lmod{\Drin_K(H,H^*)}$ in the example above; see Question~\ref{ques:DKH}.

\smallskip

Finally, we construct several examples of non-semisimple MTCs, via Theorem~\ref{thm:ZBCmodular-intro}, by using   Nichols algebras of diagonal type in braided categories of comodules over finite abelian groups.

\begin{proposition}[Proposition~\ref{cor:UBqmod}] \label{cor:UBqmod-intro}
Take $K:=\Bbbk G$, for $G$ a finite abelian group, assume that $\operatorname{char}\Bbbk=0$, and take $\BB$ a finite-dimensional Nichols algebra of diagonal type in a certain braided category $\cB$ of $K$-comodules. Consider the relative monoidal center,  $\cD:=\cZ_{\cB}(\lmod{\BB}(\cB))$,  or equivalently the category of finite-dimensional modules over the braided Drinfeld double $\Drin_{K^*}(\BB^*,\BB)$. 

Then, $\cD$ is modular when \textnormal{(i)} the canonical symmetric bilinear form $b$ on the coquasi-triangular Hopf algebra $K$ is non-degenerate, and \textnormal{(ii)} certain conditions involving elements of the top degree of $\BB$ and on the dual R-matrix of $K$ are satisfied.
\end{proposition}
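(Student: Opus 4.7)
The plan is to apply Theorem~\ref{thm:ZBCmodular-intro} with $\cB$ the category of finite-dimensional $K$-comodules, equipped with the braiding coming from the coquasi-triangular structure dual to $b$, and with $\cC := \lmod{\BB}(\cB)$. Three of the four hypotheses (non-degeneracy of $\cB$, $\cB$-centrality of $\cC$, and the topologizing condition on $G(\ov{\cB})$) should be essentially structural; the genuinely technical step is non-emptiness of $\mathsf{Sqrt}_\cC(D,\xi_D)$.

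First, I would verify that $\cB$ is a non-degenerate braided finite tensor category. Since $K=\Bbbk G$ is finite-dimensional, semisimple, and cocommutative in characteristic zero, $\cB$ is a finite semisimple braided tensor category whose braiding on simple comodules is controlled by the symmetric bilinear form $b$ on $K$; the M\"uger center $\cB'$ is trivial precisely when $b$ is non-degenerate, i.e.\ under condition~(i). Next, $\cC$ is a finite tensor category because $\BB$ is a finite-dimensional Hopf algebra in $\cB$, and the free module functor $\ov{\cB}\to\cZ(\cC)$, $V\mapsto \BB\otimes V$, produces a faithful braided monoidal functor $G$ exhibiting $\cC$ as $\cB$-central, with $\cZ_\cB(\cC)$ equivalent to $\lmod{\Drin_{K^*}(\BB^*,\BB)}$ as asserted in the statement. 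Since $G$ is an exact embedding onto a tensor subcategory, its essential image is closed under finite direct sums and subquotients, so $G(\ov{\cB})$ is topologizing in $\cZ(\cC)$.

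The crux is then non-emptiness of $\mathsf{Sqrt}_\cC(D,\xi_D)$. My approach would be to transport the Kauffman--Radford criterion of Theorem~\ref{thm:KR} across the braided Drinfeld double formalism recalled in Example~\ref{expl:br-Drin}: the braided analogue of a pivotal/ribbon datum on $\cC=\lmod{\BB}(\cB)$ reduces to the data of (a) a grouplike-type element arising from an integral in the top-degree component of the graded Nichols algebra $\BB$, governing the distinguished invertible object and Radford's formula for $\BB$ in $\cB$, and (b) a square root of the dual R-matrix of $K$ implementing the pivotal structure on $\cB$ itself. These are precisely the data packaged in condition~(ii). Combining them yields a natural isomorphism whose square is the Radford isomorphism $\xi_D$ of $\cC$, so $\mathsf{Sqrt}_\cC(D,\xi_D)\neq \emptyset$, and Theorem~\ref{thm:ZBCmodular-intro} then gives modularity of $\cD=\cZ_\cB(\cC)$.

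The main obstacle is the verification in the previous paragraph: translating condition~(ii) into the formal pivotal datum required by Shimizu's theorem is the technical heart of the argument, because one must align the combinatorics of top-degree integrals in Nichols algebras of diagonal type with the categorical Radford isomorphism of the braided module category $\lmod{\BB}(\cB)$, and show that the square root is compatible with the coquasi-triangular structure on $K$. I would expect this to be isolated as one or two preparatory lemmas, after which the proof of the proposition becomes a direct citation of Theorem~\ref{thm:ZBCmodular-intro}.
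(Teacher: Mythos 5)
Your high-level plan matches the paper's: apply Theorem~\ref{thm:ZBCmodular}, check that $\cB_\qs$ is non-degenerate (via Lemma~\ref{lem:Bq-nondeg}), that $\cC=\lmod{\BB}(\cB)$ is $\cB$-central with $G(\ov{\cB})$ topologizing, and that $\mathsf{Sqrt}_\cC(D,\xi_D)\neq\varnothing$, with the last being the technical core. However, there is one concrete error that would derail the argument.

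You describe the $\cB$-central functor $G\colon\ov{\cB}\to\cZ(\cC)$ as ``the free module functor $V\mapsto \BB\otimes V$.'' This is wrong. The functor used in the paper (Example~\ref{exmpl:br-Hopf}) equips $V$ with the \emph{trivial} $\BB$-action $a_V^\triv=\varepsilon\otimes\ide_V$ and the half-braiding $\psi^{-1}_{-,V}$; it is not the induction functor. The distinction is not cosmetic: the free module functor is not a monoidal functor (since $\BB\otimes(V\otimes W)\not\cong(\BB\otimes V)\otimes(\BB\otimes W)$), so it cannot exhibit $\cC$ as $\cB$-central, and its essential image is \emph{not} closed under subquotients (the trivial $\BB$-module $\one$ is a quotient of the free module $\BB$ but is not free). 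By contrast, the image of the trivial-action functor is topologizing precisely because both defining conditions --- trivial action and half-braiding equal to $\psi^{-1}$ --- pass to subquotients, as the paper notes in Example~\ref{exmpl:br-Hopf}. So your justification of the topologizing hypothesis (``$G$ is an exact embedding onto a tensor subcategory'') relies on the wrong functor.

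On the $\mathsf{Sqrt}_\cC(D,\xi_D)$ step, your sketch is pointed in the right direction but remains at the level of a plan. The paper makes this concrete by passing from $\lmod{\BB_\qs}(\cB_\qs)$ to modules over the bosonization $H:=\BB_\qs\rtimes K^*$, explicitly computing the distinguished grouplikes $g_H$ and $\alpha_H$ of $H$ in terms of the top-degree element of $\BB_\qs$ (Lemma~\ref{lem:grouplikes}), writing down a candidate pair $(\zeta,a)$ built from the data in condition~(ii), and verifying the three Kauffman--Radford equations ($a^2=g_H$, $\zeta^2=\alpha_H$, and \eqref{eq:S2}) directly against the presentation of Proposition~\ref{NicholsBD-pres}, finally invoking Proposition~\ref{prop:KRvsShi}. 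Your proposal gestures at ``transporting the Kauffman--Radford criterion across the braided Drinfeld double formalism'' but does not identify the bosonization as the Hopf algebra to which the ordinary Kauffman--Radford theorem is applied; without that step the translation is unclear, since Theorem~\ref{thm:KR} is stated for ordinary Hopf algebras, not Hopf algebras in braided categories.
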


Note that the Drinfeld double of the bosonization of Nichols algebras has been studied in the literature, see e.g. \cite{Hec4}, where two copies of the group algebra consitute the Cartan part. In this paper, an approach is used where the Cartan part consists of a simple (dual) group algebra $K^*$.

We end the paper by constructing, via Proposition~\ref{cor:UBqmod-intro},  examples of non-semisimple modular tensor categories attached to Nichols algebras of Cartan type [Example~\ref{ex:uqg}] and not of Cartan type [Example~\ref{expl-new}]. The former includes the representation category of the small quantum group $u_q(\mathfrak{g})$ at an odd root of unity.
Thus, the methods developed in this paper provide an alternative argument showing that the category of finite-dimensional $u_q(\fr{g})$-modules is a non-semisimple MTC, which was previously obtained in \cite{Lyu}*{Section A.3}. See also \cites{LO,GLO} for more general results on the modularity of representation categories of small quantum groups. On the other hand, the non-semisimple MTCs in Example~\ref{expl-new} illustrate that our methods can be used to analyze the modularity of representation categories attached to a broader class of Nichols algebras beyond small quantum groups.


\section{Preliminaries on Monoidal Categories}\label{sec:monoidal}
 In this section, we review terminology pertaining to monoidal categories. We refer the reader to \cite{BK}, \cite{EGNO}, and \cite{TV} for general information. We recall monoidal categories and module categories [Section~\ref{sec:monoidal1}], various types of rigid categories [Section~\ref{sec:rigid}], finite tensor categories [Section~\ref{sec:finitetens}], various braided monoidal categories [Section~\ref{sec:braided}], algebraic structures in finite tensor categories [Section~\ref{sec:algstr}], ribbon monoidal categories [Section~\ref{sec:ribbon}], and modular tensor categories in the non-semisimple setting [Section~\ref{sec:modular}].
 
 \smallskip
 
We assume that all categories here are {\it locally small} (i.e., the collection of morphisms between any two objects is a set),  and that all categories here are abelian. A full subcategory of a category is called \emph{topologizing} if it is closed under finite direct sums and subquotients \cite{Ros}*{Section~3.5.3}, \cite{Shi1}*{Definition~4.3}. 
 Given a functor $F\colon \cC\to \cD$ between two categories $\cC$ and $\cD$, the \emph{full image} of $F$ is the full subcategory of $\cD$ on all objects isomorphic to an object of the form $F(C)$ for $C$ in~$\cC$. 


\subsection{Monoidal categories, monoidal functors, and module categories}
\label{sec:monoidal1}
We refer the reader to \cite{EGNO}*{Sections~2.1--2.6 and~7.1} and \cite{TV}*{Sections~1.1--1.4} for further details of the items discussed here.

\smallskip

A {\it monoidal category} consists of a category $\cC$ equipped with a bifunctor $\otimes\colon  \cC \times \cC \to \cC$, a natural isomorphism $\alpha_{X,Y,Z}\colon  (X \otimes Y) \otimes Z \overset{\sim}{\to} X \otimes (Y \otimes Z)$ for each $X,Y,Z \in \cC$, an object $\one \in \cC$, and natural isomorphisms $l_X\colon  \one \otimes X \overset{\sim}{\to} X$ and $r_X\colon  X  \otimes \one \overset{\sim}{\to} X$ for each $X \in \cC$, such that the pentagon and triangle axioms hold. By MacLane's coherence theorem, we will assume that all monoidal categories are {\it strict} in the sense that $(X \otimes Y) \otimes Z = X \otimes (Y \otimes Z)$ and $\one \otimes X  = X = X \otimes \one$, for all $X, Y, Z \in \cC$; that is, $\alpha_{X,Y,Z},\; l_X,\; r_X$ are identity maps.  For a monoidal category $(\cC, \otimes, \one)$, define the {\it opposite
monoidal category} to  
$\cC^{\otimes \text{op}} = (\cC, ~\otimes^{\text{op}}, ~\one)$
where 
$X \otimes^{\text{op}} Y := Y \otimes X$. 

\smallskip

A {\it (strong) monoidal functor} $(F, F_{-,-}, F_0)$ between monoidal categories $(\cC, \otimes_\cC, \one_\cC)$ to $(\cD, \otimes_\cD, \one_\cD)$ is a functor $F\colon  \cC \to \cD$ equipped with a natural isomorphism $F_{X,Y}\colon  F(X) \otimes_\cD F(Y) \isomorph F(X \otimes_\cC Y)$ for all $X,Y \in \cC$, and an isomorphism $F_0\colon  \one_\cC \isomorph F(\one_\cC)$ in $\cD$, that satisfy associativity and unitality constraints. An {\it equivalence of  monoidal categories} is provided by a  monoidal functor between the two monoidal categories that yields an equivalence of the underlying categories.

\smallskip

Representations of monoidal categories are provided by the next notion. A left {\it $\cC$-module category} is a category $\cM$ equipped with a bifunctor $\otimes\colon  \cC \times \cM \to \cM$, natural isomorphisms for associativity $m_{X,Y,M}\colon (X \otimes Y)\otimes M \to X\otimes(Y\otimes M)$, for all $X,Y \in \cC, M \in \cM$ satisfying the pentagon axiom, and
for each $M \in \cM$ a natural isomorphism $\one \otimes M \to M$ satisfying the triangle axiom.


\subsection{Rigid, pivotal, and trace-spherical monoidal categories} \label{sec:rigid}
We refer  to \cite{EGNO}*{Sections~2.10 and~4.7} and \cite{TV}*{Sections~1.5--1.7} for further details of the items discussed below.

\smallskip

A monoidal category $(\cC, \otimes, \one)$ is {\it rigid} if it comes equipped with left and right dual objects,  i.e., for each $X \in \cC$ there exist, respectively, an object $X^* \in \cC$ with co/evaluation maps $\ev_X\colon  X^* \otimes X \to \one$ and $\coev_X\colon  \one \to  X \otimes X^*$, as well as an object ${}^*X \in \cC$ with co/evaluation maps $\evr_X\colon  X \otimes {}^*X \to \one$, $\coevr_X\colon \one \to  {}^*X \otimes X$,  satisfying the usual coherence conditions of left and right duals. 
An object $X$ in a rigid monoidal category $\cC$ is {\it invertible} if $\ev_X$ and $\coev_X$ are isomorphisms.

\smallskip

 A rigid monoidal category is {\it pivotal} if it is equipped with isomorphisms $j_X\colon  X \overset{\sim}{\to} X^{**}$ natural in $X$ and satisfying $j_{X \otimes Y} = j_X \otimes j_Y$ for all $X,Y \in \cC$. 
Equivalently, a pivotal category is a rigid monoidal category such that the functors of left and right duality coincide as monoidal functors \cite{TV}*{Section 1.7}.

\smallskip

The {\it quantum dimension} of an object $X$ of a pivotal (rigid) monoidal category $(\cC, \otimes, \one, j)$  is defined to be $\dim_j(X) = {\sf ev}_{X^*}  (j_X \otimes {\sf Id}_{X^*})  {\sf coev}_X \in \End_\cC(\one).$ 
A pivotal monoidal category $(\cC, \otimes, \one, j)$ is {\it trace-spherical} if $\dim_j(X) = \dim_j(X^*)$ for each $X \in \cC$.


\subsection{Finite tensor categories} \label{sec:finitetens}
Recall that $\Bbbk$ is an algebraically closed field. We now discuss certain $\Bbbk$-linear monoidal categories following the terminologies of \cite{EGNO}*{Sections~1.8,~7.1--7.3,~7.9}.

\smallskip

A $\Bbbk$-linear abelian category $\cC$ is {\it locally finite} if, for any two objects $V,W$ in $\cC$, $\Hom_{\cC}(V,W)$ is a finite-dimensional $\Bbbk$-vector space and every object has a finite filtration by simple objects. Moreover, we say that $\cC$ is {\it finite} if there are finitely many isomorphism classes of simple objects. Equivalently, $\cC$ is locally finite if it is equivalent to the category of finite-dimensional comodules over a $\Bbbk$-coalgebra (or, to modules over a finite-dimensional $\Bbbk$-algebra if $\cC$ is finite).
A  {\it tensor category} is a locally finite, rigid, monoidal category $(\cC, \otimes, \one)$ such that $\otimes$ is $\Bbbk$-linear in each slot and $\one$ is a simple object of $\cC$.
 A {\it tensor functor} is a $\Bbbk$-linear exact  monoidal functor  between  tensor categories. 
 
\smallskip

An example of a finite tensor category is $\Vect$, the category of finite-dimensional $\Bbbk$-vector spaces. More generally, the category $\lmod{H}$ of finite-dimensional $\Bbbk$-modules over a (finite-dimensional) Hopf algebra $H$ is a (finite) tensor category.

\smallskip

We will use the following tensor product of finite tensor categories. The {\it Deligne tensor product} of two finite abelian categories is the abelian category $\cC \boxtimes \cD$  equipped with a bifunctor $\boxtimes\colon  \cC \times \cD \to \cC \boxtimes \cD$, $(X,Y) \mapsto X \boxtimes Y$, right exact in both variables so that for any abelian category $\cA$ and any bifunctor $F\colon  \cC \times \cD \to \cA$ right exact in both slots, there exists a unique right exact functor $\overline{F}\colon  \cC \boxtimes \cD \to \cA$ with $\overline{F} \circ \boxtimes = F$ \cite{Del}*{Section 5}. It is monoidal  when both $\cC$ and $\cD$ are so, via 
\begin{equation} \label{eq:Deligne-monoidal}
    (X \boxtimes Y) \otimes^{\cC \boxtimes \cD} (X' \boxtimes Y'):=(X \otimes^\cC X') \boxtimes (Y \otimes^\cD Y'),
\end{equation}
for all $X,X' \in \cC$ and $Y,Y' \in \cD$,
and with the unit object $\one_\cC \boxtimes \one_\cD$.
If $\cC$, $\cD$ are finite tensor categories, then so is $\cC\boxtimes \cD$. Given two tensor functors $F\colon \cC\to\cD$ and $F'\colon \cC'\to \cD'$ between finite tensor categories, there exists an induced tensor functor $F\boxtimes F'\colon \cC\boxtimes\cC'\to \cD\boxtimes \cD'$.
\smallskip

For a tensor category $\cC$ over $\Bbbk$, a left {\it module category} over $\cC$ is a module category $\cM$ as in Section~\ref{sec:monoidal1} with the requirement that $\cM$ is also $\Bbbk$-linear and abelian so that the underlying bifunctor is $\Bbbk$-linear on morphisms and exact in the first variable (it is always exact in the second variable).

\smallskip

An {\it internal Hom object} for a module category $\cM$ over a $\Bbbk$-linear, finite, tensor  category $\cC$ is an object $\iHom(M_1,M_2)$ in $\cC$, for $M_1, M_2 \in \cM$, that represents
the left exact functor $\cC \to \Vect,$ defined by $X \mapsto \Hom_{\cM}(X \otimes M_1, M_2).$
Namely, we have a natural isomorphism: $\Hom_{\cM}(X \otimes M_1, M_2) \cong \Hom_{\cC}(X,\iHom(M_1,M_2))$.


\subsection{Braided monoidal categories, the monoidal center \texorpdfstring{$\cZ(\cC)$}{Z(C)}, and the M\"uger center~\texorpdfstring{$\cC'$}{C'}} 
\label{sec:braided}
Here, we discuss braided tensor categories and related constructions, and refer the reader to \cite{BK}*{Chapter~1}, \cite{EGNO}*{Sections~8.1--8.3, 8.5,~8.20}, and \cite{TV}*{Sections~3.1 and~5.1} for more information.

\smallskip

A {\it braided tensor category} $(\cC, \otimes, \one, c)$ is a tensor category equipped with a natural isomorphism $c_{X,Y}\colon  X \otimes Y \overset{\sim}{\to} Y \otimes X$ for each $X,Y \in \cC$ such that the hexagon axioms hold. 
By a {\it braided tensor subcategory} of a braided tensor category $\cC$ we mean a subcategory of $\cC$ containing the unit object of $\cC$, closed under the tensor product of $\cC$, and containing the braiding isomorphisms. 
A {\it braided tensor functor} between braided tensor categories $\cC$ and $\cD$ is a tensor  functor $(F, F_{*,*},F_0)\colon  \cC \to \cD$ so that $F_{Y,X} \; c^{\cD}_{F(X),F(Y)} = F(c^{\hspace{.01in}\cC}_{X,Y}) \; F_{X,Y}$ for all $X,Y \in \cC$. An {\it equivalence of  braided tensor categories} is a braided tensor functor between the two tensor categories that yields an equivalence of the underlying categories.

\smallskip 

An important example of a braided tensor category is the {\it monoidal center} (or {\it Drinfeld center}) $\cZ(\cC)$ of a tensor category $(\cC, \otimes, \one)$: its objects are pairs $(V, c_{V,-})$ where $V$ is an object of $\cC$ and $c_{V,X}\colon  V \otimes X \overset{\sim}{\to} X \otimes V$ is a natural isomorphism (called a  {\it half-braiding}) satisfying $c_{V,X \otimes Y} = (\ide_{X} \otimes c_{V,Y})(c_{V,X} \otimes \ide_{Y}).$
An important feature of $\cZ(\cC)$ is the braiding defined by $$c_{(V,c_{V,-}),(W,c_{W,-})}:=c_{V,W}\colon V\otimes W \isomorph W\otimes V.$$



\begin{proposition}[see \cite{EGNO}*{Section~7.13}] \label{prop:ZC-finitetens}
If $\cC$ is a (finite) tensor category, then $\cZ(\cC)$ is a braided (finite) tensor category. \qed
\end{proposition}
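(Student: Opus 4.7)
The plan is to equip $\cZ(\cC)$ with the tensor-categorical structure inherited from $\cC$, verify the braiding axioms explicitly, and then address finiteness, which is the real content.

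First I would set up the monoidal and abelian structure. Tensor products of half-braided objects are defined by $(V, c_{V,-}) \otimes (W, c_{W,-}) := (V \otimes W,\, c_{V\otimes W, -})$ with half-braiding $c_{V\otimes W, X} := (c_{V,X} \otimes \ide_W)(\ide_V \otimes c_{W,X})$, and the unit is $(\one, \ide)$. Morphisms are morphisms in $\cC$ commuting with half-braidings, so there is a forgetful functor $U \colon \cZ(\cC) \to \cC$ that is strict monoidal, $\Bbbk$-linear, and faithful. The abelian structure transports along $U$: a kernel or cokernel in $\cC$ of a morphism in $\cZ(\cC)$ inherits, by naturality of the source and target half-braidings, a unique half-braiding making it the corresponding (co)kernel in $\cZ(\cC)$; hence $\cZ(\cC)$ is $\Bbbk$-linear abelian with $U$ exact. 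Simplicity of $\one$ in $\cZ(\cC)$ is immediate from the injection $\End_{\cZ(\cC)}(\one) \hookrightarrow \End_\cC(\one) = \Bbbk$.

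Next I would verify the braided rigid structure. The morphism $c_{V,W}$ of $\cC$ lifts to a morphism in $\cZ(\cC)$ between $(V \otimes W, c_{V \otimes W, -})$ and $(W \otimes V, c_{W \otimes V, -})$: the intertwining identity reduces to naturality of $c_{V,-}, c_{W,-}$ together with the defining half-braiding identity. The two hexagon axioms for the braiding on $\cZ(\cC)$ themselves reduce to $c_{V, X \otimes Y} = (\ide_X \otimes c_{V,Y})(c_{V,X} \otimes \ide_Y)$. For rigidity one endows $V^*$ (and likewise ${}^*V$) with a half-braiding built from $(c_{V, X})^{-1}$ using $\ev_V$ and $\coev_V$, and the snake identities in $\cC$ upgrade to show $(V^*, c_{V^*, -})$ is a left dual of $(V, c_{V,-})$ in $\cZ(\cC)$. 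This completes the braided tensor category structure.

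The hard step is showing that $\cZ(\cC)$ is \emph{finite} when $\cC$ is. I would invoke the equivalence $\cZ(\cC) \simeq \mathsf{Fun}_{\cC \boxtimes \cC^{\otimes \oop}}(\cC, \cC)$ of $\cZ(\cC)$ with $\cC$-bimodule endofunctors of $\cC$, regarding $\cC$ as an indecomposable exact module category over the finite tensor category $\cC \boxtimes \cC^{\otimes \oop}$; the theorem of Etingof--Ostrik identifying the dual of an exact module category over a finite tensor category as again a finite tensor category then gives finiteness of $\cZ(\cC)$. In the Hopf-algebraic case $\cC \simeq \lmod{H}$ for finite-dimensional $H$, this specializes to the classical identification $\cZ(\cC) \simeq \lmod{\Drin(H)}$ with modules over the finite-dimensional Drinfeld double. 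The preceding axiomatic verifications are formal, but bounding the number of simple objects in $\cZ(\cC)$ and their composition lengths truly requires one of these deeper inputs; this is where I expect the main obstacle to lie.
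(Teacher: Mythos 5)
The paper supplies no proof; it simply cites \cite{EGNO}*{Section~7.13} and closes the statement with a \qed. Your reconstruction matches the argument given there: the formal verifications (abelian structure transported along the faithful exact forgetful functor, braiding from the half-braidings, duals via the inverse half-braiding) are the standard ones, and finiteness is obtained, exactly as in EGNO, from the identification $\cZ(\cC) \simeq (\cC \boxtimes \cC^{\otimes \oop})^*_{\cC}$ together with the Etingof--Ostrik result that the dual of a finite tensor category with respect to an indecomposable exact module category is again a finite tensor category.
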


Given two braided finite tensor  categories $(\cC,\otimes^\cC,\one_\cC,c^\cC)$ and $(\cD,\otimes^\cD,\one_\cD,c^\cD)$, the Deligne tensor product $\cC\boxtimes \cD$  is a braided finite tensor category. The braiding is obtained from 
\begin{equation} \label{eq:Deligne-braiding}
    c_{X\boxtimes Y,X'\boxtimes Y'}=c^\cC_{X,X'}\boxtimes c^\cD_{Y,Y'}\colon (X\otimes^\cC X')\boxtimes (Y\otimes^\cD Y')\to (X'\otimes^\cC X)\boxtimes (Y'\otimes^\cD Y)
\end{equation}
for all $X,X'\in \cC$, $Y,Y'\in\cD$.

\smallskip

We need to consider later the {\it M\"uger center}  of a braided tensor category $(\cC, \otimes, \one, c)$, which is the full subcategory on the objects
\begin{equation} \label{eq:Mugercenter}
\Ob (\cC') := \{ X \in \cC ~|~ c_{Y,X}\; c_{X,Y} = {\sf Id}_{X \otimes Y} \text{ for all } Y \in \cC \}.
\end{equation}


\subsection{Algebraic structures in tensor categories}
\label{sec:algstr}

In this section, let $\cC:=(\cC, \otimes, \one)$ be a tensor category over $\Bbbk$. Assume that all structures below are $\Bbbk$-linear as well.


\subsubsection{(Co)algebras and their (co)modules} \label{sec:co/alg}
 We discuss in this part algebras and coalgebras in~$\cC$ and their (co)modules. More information is available in \cite{EGNO}*{Section~7.8} and \cite{TV}*{Section~6.1}.

\smallskip

An {\it algebra} in $\cC$ is an object $A \in \cC$ equipped with two morphisms $m\colon  A \otimes A \to A$ (multiplication) and $u\colon  \one \to A$ (unit) satisfying $m(m\otimes \ide_A) = m(\ide_A \otimes m)$ and $m(u \otimes \ide_A) = \ide_A = m(\ide_A \otimes  u)$. We denote by $\Alg(\cC)$ the category of algebras in $\cC$, where morphisms in $\Alg(\cC)$ are morphisms $f\colon  A \to A'$ in $\cC$ so that $f \; m_A = m_{A'} (f \otimes f)$ and $f \; u_A = u_{A'}$. 

\smallskip

Given an algebra $A$ in $\cC$, a \emph{left $A$-module in $\cC$} is a pair $(V,a_V)$ for $V$ an object in $\cC$ and $$a_V\colon A\otimes V\to V,$$ a morphism in $\cC$ satisfying 
$a_V(m\otimes \ide_V)=a_V(\ide_A \otimes  \; a_V)$ and $a_V(u\otimes \ide_V)=\ide_V$.
A morphism of $A$-modules $(V,a_V) \to (W, a_W)$ is a morphism $V \to W$ in $\cC$ that intertwines with $a_V$ and $a_W$. 
This way, we define the category $\lmod{A}(\cC)$ of left $A$-modules in $\cC$. Analogously, we define $\rmod{A}(\cC)$, the category of \emph{right $A$-modules in $\cC$}.

\smallskip

A {\it coalgebra} in $\cC$ is an object $C \in \cC$ equipped with two morphisms $\Delta\colon  C \to C \otimes C$ (comultiplication) and $\varepsilon\colon  C \to \one$ (counit) satisfying $(\Delta \otimes \ide_C) \Delta= (\ide_C \otimes\; \Delta)\Delta$ and $(\varepsilon \otimes \ide_C) \Delta= \ide_C = (\ide_C \otimes\; \varepsilon)\Delta$. Dual to above, we can define the category $\Coalg(\cC)$ of coalgebras and their morphisms in $\cC$, and given $C \in \Coalg(\cC)$ we can define categories, $\lcomod{C}(\cC)$ and $\rcomod{C}(\cC)$, of {\it left} and {\it right $C$-comodules in $\cC$}, respectively. For $V \in \lcomod{C}(\cC)$, the left $C$-coaction map is denoted by
$$\delta_V\colon  V \to C \otimes V.$$


\subsubsection{Bialgebras and Hopf algebras} \label{sec:Hopfalg}
In this part, let $\cC:=(\cC, \otimes, \one, c)$ be a braided  tensor category over $\Bbbk$. We define bialgebras and Hopf algebras in $\cC$ here, and more details can be found \cite{TV}*{Sections~6.1 and~6.2}.


\smallskip

A {\it bialgebra} in $\cC$ is a tuple $H:=(H,m,u,\Delta,\varepsilon)$ where $(H,m,u) \in {\sf Alg}(\cC)$ and $(H,\Delta,\varepsilon) \in {\sf Coalg}(\cC)$ so that $\Delta m=(m\otimes m)(\ide\otimes c \otimes \ide)(\Delta\otimes \Delta)$, $\Delta u = u \otimes u$, $\varepsilon m = \varepsilon \otimes \varepsilon$, and $\varepsilon u = \ide_{\one}$.  We denote by $\Bialg(\cC)$ the category of bialgebras in $\cC$, where morphisms in $\Bialg(\cC)$ are morphisms in $\cC$ that belong to $\Alg(\cC)$ and $\Coalg(\cC)$ simultaneously. 

\smallskip

A {\it Hopf algebra} is a tuple $H:=(H,m,u,\Delta,\varepsilon, S)$, where $(H,m,u,\Delta,\varepsilon) \in \Bialg(\cC)$ and $S\colon H \to H$ is a morphism in $\cC$ (called an {\it antipode}) so that 
$m(S \otimes \ide_H)\Delta = m(\ide_H \otimes S)\Delta = u \varepsilon$. We denote by $\HopfAlg(\cC)$ the category of Hopf algebras in $\cC$, where morphisms  are morphisms in $\Bialg(\cC)$. We assume that all Hopf algebras in this work have an invertible antipode, that is, there exists a morphism $S^{-1}\colon H \to H$ is a morphism in $\cC$ so that $S S^{-1} = \ide_H = S^{-1} S$.


\subsubsection{(Co)modules over Hopf algebras}
\label{sec:Hmod}
Now we discuss (co)modules over Hopf algebras $H$ in a braided tensor category $(\cC, \otimes, \one, c)$. 
We refer the reader to  \cite{EGNO}*{Sections~7.14,~7.15,~8.3} and \cite{Bes}*{Section~3} for more details.

\smallskip

If $V,W$ are left $H$-modules in $\cC$, then so is the tensor product $V \otimes W$, via the action \eqref{tensorproductaction} below:
\begin{equation}\label{tensorproductaction}a_{V\otimes W}:= (a_V\otimes a_W)(\ide_H\otimes c_{H,V}\otimes \ide_W)(\Delta_{H} \otimes \ide_{V\otimes W}).
\end{equation}
This makes the category $\lmod{H}(\cC)$  a monoidal category, with unit object $(\one=\Bbbk, a_{\one} = \varepsilon_{H} \otimes \ide_{\one})$.
Assume that $\cC$ is rigid, and take $(V,a_V) \in \lmod{H}(\cC)$. Then its left dual 
 $(V^*,a_{V^*}) \in \lmod{H}(\cC)$ is defined using $S_H$, and its right dual $({}^*V,a_{{}^*V}) \in \lmod{H}(\cC)$ is defined using $S^{-1}_H$. 
 It follows that $\lmod{H}(\cC)$ is a (finite) tensor category provided $\cC$ is a (finite) braided tensor category.
 
\smallskip

 For one supply of braided tensor categories, take the category $\lmod{H}(\Vect)$ for  $H$ a finite-dimensional $\Bbbk$-Hopf algebra. We say that $H$ is {\it quasi-triangular} if it comes equipped with an invertible element $R = R^{(1)} \otimes R^{(2)} \in H \otimes H$ satisfying
$$(\Delta \otimes \ide)(R) = R_{13} R_{23}, \quad
(\ide \otimes \Delta)(R) = R_{13}R_{12}, \quad
\Delta^{\text{op}}(h) = R \Delta(h) R^{-1}, \quad \text{ for } h \in H,$$
where $\Delta^{\text{op}}$ is the opposite coproduct.
It follows that $\lmod{H}(\Vect)$ is a braided tensor category if and only if the finite-dimensional Hopf algebra $H$ is quasi-triangular; here, the braiding is given by 
$$c_{V,W}(v\otimes w)= a_W(R^{(2)} \otimes w)\otimes a_V(R^{(1)}\otimes v),$$
for $(V,a_V), (W, a_W) \in \lmod{H}(\Vect)$. We say that $H$ is {\it coquasi-triangular} if it comes equipped with a convolution-invertible bilinear form $r\colon  H \otimes H \to \Bbbk$ satisfying
\begin{gather*}
r(h,k\ell) = r(h_{(1)},\ell)r(h_{(2)},k), \quad r(\ell h,k) = r(\ell,k_{(1)})r(h,k_{(2)}), \\ r(h_{(1)},\ell_{(1)})h_{(2)}\ell_{(2)} = \ell_{(1)}h_{(1)} r(h_{(2)},\ell_{(2)}),
\end{gather*}
$h,k,\ell \in H$ (see e.g. \cite{Majid}*{Definition 2.2.1}).
It follows that $\lcomod{H}(\Vect)$ is a braided tensor category if and only if the finite-dimensional Hopf algebra $H$ is coquasi-triangular; here, the braiding is given by 
$$c_{V,W}(v\otimes w)  = (r \otimes \ide_W \otimes \ide_V)(\ide_H \otimes \tau \otimes \ide_V)(\delta_W \otimes \delta_V)(w \otimes v)$$
for  $(V,\delta_V), (W, \delta_W) \in \lcomod{H}(\Vect)$ and $\tau(a \otimes b) = b \otimes a$.

\smallskip

For another supply of braided tensor categories, take a Hopf algebra $H$ in $\cC$, and consider the category of {\it $H$-Yetter--Drinfeld modules in $\cC$}, denoted by $\lYD{H}(\cC)$, which consists of objects $(V,a_V,\delta_V)$, where $(V,a_V) \in \lmod{H}(\cC)$ with left $H$-coaction  in $\cC$ denoted by $\delta_V\colon  V \to H \otimes V$, subject to compatibility condition:
\[
\begin{split} 
\label{eq:HYDB}
&(m_H \otimes a_V)(\ide_H \otimes c_{H,H} \otimes \ide_V)(\Delta_H \otimes \delta_V)\\
&\quad = (m_H \otimes \ide_V)(\ide_H \otimes c_{V,H})(\delta_V \otimes \ide_H) (a_V \otimes \ide_H) (\ide_H \otimes c_{H,V})(\Delta_H \otimes \ide_V).
\end{split}
\]
A morphism $f\colon (V,a_V,\delta_V)\to (W,a_W,\delta_W)$ in $\lYD{H}(\cC)$ is given by a morphism $f\colon V\to W$ in $\cC$ that belongs to $\lmod{H}(\cC)$ and $\lcomod{H}(\cC)$. Given two objects $(V,a_V,\delta_V)$ and $(W,a_W,\delta_W)$ in $\lYD{H}(\cC)$, their tensor product  is given by $(V\otimes W, a_{V\otimes W}, \delta_{V\otimes W})$, where
 $a_{V\otimes W}$  as in \eqref{tensorproductaction} and 
\begin{align*}
\delta_{V\otimes W}=(m_H\otimes \ide_{V\otimes W})(\ide_H\otimes c_{H,V}\otimes \ide_W)(\delta_V\otimes \delta_W)
.
\end{align*}
The category $\lYD{H}(\cC)$ is braided with braiding  given by
$c^{\sf YD}_{V,W} = (a_W \otimes \ide_V)(\ide_H \otimes c^{\cC}_{V,W})(\delta_V \otimes \ide_W).$
Further, when $\cC = \Vect$ and $\dim_\Bbbk  H < \infty$, we get that $\lYD{H}(\Vect)$  is equivalent to the braided tensor category of modules over the Drinfeld double, $\Drin(H)$, see e.g. \cite{Majid}*{Theorem 7.1.2} and cf. Example \ref{expl:br-Drin} below with $K=\Bbbk$.

\subsection{Ribbon tensor categories}
\label{sec:ribbon}
In this section we assume that $\cC := (\cC, \otimes, \one, c)$ is a braided tensor category, and we refer the reader to \cite{BK}*{Chapter~2}, \cite{EGNO}*{Sections~8.9--8.11},  \cite{TV}*{Section~3.3}, and \cite{Rad}*{Chapter~12} for details of the discussion below.

\smallskip

A braided tensor category $(\cC, \otimes, \one, c)$  is {\it ribbon} (or \emph{tortile}) if  it is equipped with a natural isomorphism $\theta_X\colon X \overset{\sim}{\to} X$  (a {\it twist}) satisfying $\theta_{X \otimes Y} = (\theta_X \otimes \theta_Y)  \circ c_{Y,X} \circ c_{X,Y}$ and $(\theta_X)^* = \theta_{X^*}$ for all $X,Y \in \cC$. A \emph{functor (or, equivalence) of ribbon categories} is a functor (respectively, equivalence) $F\colon \cC\to \cD$ of braided  tensor  categories such that $F(\theta_X^{\cC})=\theta^{\cD}_{F(X)}$, for any $X\in \cC$, cf. \cite{Shum}*{Section~1}.

\smallskip

In a ribbon category $(\cC,\otimes,\one,c,\theta)$, consider the {\it Drinfeld isomorphism}:
\begin{align}\label{Driniso}
\phi_X=(\ide_{X^{**}}\otimes \ev_X)(c_{X^*,X^{**}}\otimes \ide_{X})(\coev_{X^*}\otimes\ide_X)\colon X \overset{\sim}{\to} X^{**}.
\end{align}
Then,
\begin{align}\label{eq:ribbonpivotal} 
j_X:=\phi_X\theta_X\colon X\isomorph X^{**}
\end{align}
defines a pivotal structure on $\cC$.

\smallskip

For a supply of ribbon categories,  consider the category $\lmod{H}(\Vect)$ for  $H = (H,R)$ a finite-dimensional quasi-triangular $\Bbbk$-Hopf algebra. We say that $H$ is a \emph{ribbon Hopf algebra} if there exists a central invertible element $v\in H$ satisfying
\begin{align}\label{ribbonelement}
\Delta(v)=(R_{21}R)^{-1}(v\otimes v), \qquad \varepsilon(v)=1, \qquad S(v)=v.
\end{align}
This definition is equivalent to the one given in \cite{RT90}*{Section 3.3}, \cite{Rad}*{Definition 12.3.5}. 
It follows that $\lmod{H}(\Vect)$ is a ribbon category if and only if $H$ is a ribbon Hopf algebra \cite{Majid}*{Corollary~9.3.4}. 
In this case, the ribbon twist is given by the action of $v^{-1}$.



\smallskip

The following lemma will be of use.

 \begin{lemma} \label{lem:ribbon-subcat}
 Take $\cD$ a braided full tensor subcategory of a braided tensor category~$\cC$. If $\cC$ is ribbon, then so is $\cD$.  \qed
 \end{lemma}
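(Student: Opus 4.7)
The plan is to simply restrict the ribbon twist from $\cC$ to $\cD$ and verify that each of the defining axioms is inherited. Since $\cD$ is a full subcategory of $\cC$, the key observation is that every morphism-level condition that holds in $\cC$ between objects of $\cD$ automatically holds in $\cD$.

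More precisely, for each $X \in \cD$, set $\theta^{\cD}_X := \theta^{\cC}_X$, which is a well-defined isomorphism $X \overset{\sim}{\to} X$ in $\cD$ by fullness. Naturality of $\theta^\cD$ with respect to morphisms in $\cD$ follows from naturality of $\theta^\cC$, again using that $\Hom_{\cD}(X,Y) = \Hom_{\cC}(X,Y)$. Since $\cD$ is closed under the tensor product of $\cC$ and contains the braiding isomorphisms $c^\cC_{X,Y}$ for $X,Y \in \cD$, the identity
\[
\theta^\cD_{X\otimes Y} \;=\; (\theta^\cD_X \otimes \theta^\cD_Y)\, c^\cC_{Y,X}\, c^\cC_{X,Y}
\]
is exactly the corresponding identity for $\theta^\cC$ read inside $\cD$.

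The one point needing a brief remark is the self-duality condition $(\theta^\cD_X)^* = \theta^\cD_{X^*}$. By hypothesis $\cD$ is itself a (braided) tensor category, so it has duals; and as a full tensor subcategory of $\cC$, a dual of $X$ computed inside $\cD$ is canonically isomorphic (via the uniqueness of duals) to the dual $X^*$ computed in $\cC$, with the isomorphism identifying the respective evaluation and coevaluation maps. Under this identification, the morphism $(\theta^\cD_X)^*$ in $\cD$ coincides with $(\theta^\cC_X)^*$ in $\cC$, which in turn equals $\theta^\cC_{X^*} = \theta^\cD_{X^*}$ by the ribbon axiom in $\cC$.

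I do not foresee a real obstacle: the entire argument is that being a ribbon structure is a property expressed by equalities of morphisms, and a full tensor subcategory closed under the braiding inherits all such properties tautologically. The only place one must be mildly careful is the identification of duals in $\cD$ with duals in $\cC$, but this is standard from the uniqueness of dual objects in a rigid monoidal category.
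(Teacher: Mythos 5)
Your proposal is correct and takes essentially the same approach as the paper, which simply observes that the ribbon structure is obtained by restriction from $\cC$ to $\cD$. Your additional remark about identifying duals in $\cD$ with those in $\cC$ via uniqueness of dual objects is the right way to handle the one non-tautological point, and the rest is, as you say, a matter of morphism-level identities being inherited along a full tensor subcategory closed under the braiding.
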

 
This result is obtained by restricting the ribbon structure from $\cC$ to $\cD$.  Moreover, the ribbon structure of the monoidal center $\cZ(\cC)$ will be discussed later in Section~\ref{sec:centerribbon}.


\subsection{Modular tensor categories}
\label{sec:modular}
In the section, we discuss a notion of a modular tensor category for the non-semisimple setting. This is based on work of Kerler--Lyubashenko \cite{KL} and recent work of Shimizu \cite{Shi1}. To proceed, we adopt the definition of non-degeneracy below, which extends the notion of non-degeneracy in the semisimple setting; see \cite{EGNO}*{Definition~8.19.2 and Theorem~8.20.7}.

\begin{definition}[{\cite{Shi1}*{Theorem~1}}] 
\label{def:nondeg} 
We call a braided finite tensor category $(\cC, \otimes, \one, c)$ {\it non-degenerate}  if its M\"uger center $\cC'$ is equal to $\Vect$. 
\end{definition}

Next we discuss a characterization of non-degeneracy. 
Let $(\cC, \; c_{X,Y}\colon X \otimes Y \overset{\sim}{\to} Y \otimes X)$ be a braided tensor category, and take the braided tensor category:
$$\overline{\cC} := (\cC,\; c^{-1}_{Y,X}\colon X \otimes Y \overset{\sim}{\to} Y \otimes X).$$ The assignments $\cC \to \cZ(\cC)$, $X \mapsto (X, c_{X,-})$,  and $\overline{\cC} \to \cZ(\cC)$, $X \mapsto (X, c^{-1}_{-,X})$, extend to a braided tensor functor $\cC \boxtimes \overline{\cC} \to \cZ(\cC)$. If this functor yields an equivalence between the braided tensor categories $\cC \boxtimes \overline{\cC}$ and $\cZ(\cC)$, then we say that $(\cC, \otimes, \one, c)$  is {\it factorizable}.
A braided finite tensor category is non-degenerate if and only if it is factorizable \cite{Shi1}*{Theorem~4.2}; note that this article also provides a third equivalent characterization of non-degeneracy in terms of a non-degenerate form on the coend.

\smallskip

Moreover, the following type of tensor  categories are of primary interest in this work.

\begin{definition}[{\cite{KL}*{Definition~5.2.7}, \cite{Shi1}*{Section 1}}] \label{def:modular}
A braided finite tensor category is called {\it modular} if it is non-degenerate and ribbon.
\end{definition}

Now consider the braided finite tensor category $\lmod{H}(\Vect)$ for $H$ a finite-dimensional, quasi-triangular Hopf algebra over $\Bbbk$. We get that $\lmod{H}(\Vect)$ is modular precisely when $H$ is ribbon and factorizable \cite{EGNO}*{Proposition~8.11.2 and Example~8.6.4}.

\begin{remark} \label{rem:modular-subcat}
By Lemma~\ref{lem:ribbon-subcat}, we obtain that a topologizing non-degenerate braided tensor subcategory of a modular category is also modular.
\end{remark}

\begin{remark} \label{rem:Deligne-modular}
It is straight-forward to show that if $\cC$ and $\cD$ are modular, then so is $\cC \boxtimes \cD$ via the monoidal structure \eqref{eq:Deligne-monoidal}, the braiding \eqref{eq:Deligne-braiding}, and with ribbon structure $\theta^{\cC \boxtimes \cD} := \theta^\cC \boxtimes \theta^\cD$.
\end{remark}


\section{Non-semisimple spherical categories and ribbon structures on the center}\label{sec:spherical}
Let $\cC$ be a finite tensor category over an algebraically closed field $\Bbbk$. The purpose of this section is to review sufficient conditions for the monoidal center $\cZ(\cC)$ to be a modular tensor category. First, we recall the notion of a distinguished invertible object and the Radford isomorphism of $\cC$ in Section~\ref{sec:Ddelta}. This allows us to recall, in Section~\ref{sec:centerribbon}, Shimizu's necessary and sufficient conditions for $\cZ(\cC)$ to be ribbon, generalizing a result of Radford--Kauffman  in the case when $\cC =  \lmod{H}(\Vect)$ for $H$ a finite-dimensional Hopf algebra. In Section~\ref{sec:nonss-sph}, we recall the concept of a spherical category introduced in the work of Douglas--Schommer-Pries--Snyder \cite{DSS}, expanding the semisimple notion in \cite{BW} to the non-semisimple setting; it is then applied to describe when $\cZ(\cC)$ is modular.

\subsection{The distinguished invertible object \texorpdfstring{$D$}{D} and the Radford isomorphism \texorpdfstring{$\xi_{D}$}{xiD}}
\label{sec:Ddelta}
For details, see \cite{EGNO}*{Sections~7.18--7.19 and~8.10}, \cite{Rad}*{Section~10.5}, and references within.

\smallskip

Consider $\cC$ as a $\cC \boxtimes \cC^{\otimes \text{op}}$-module category.
In this case, the {\it canonical algebra} is defined as $A_{\text{can}}:=\iHom(\one,\one) \in {\sf Alg}(\cC \boxtimes \cC^{\otimes \text{op}}).$ For example, if  $H$ is a Hopf algebra over $\Bbbk$, then the canonical algebra in $\cC:=\lmod{H}(\Vect)$ is $H^* \in {\sf Alg}(\cC \boxtimes \cC^{\otimes \text{op}})$ viewed as an $H$-bimodule over $\Bbbk$ with left and right $H$-actions given by translation.
The category ${\sf HopfBimod}(\cC):=\rmod{A_{\text{can}}}(\cC \boxtimes \cC^{\otimes \text{op}})$ of right $A_{\text{can}}$-modules in $\cC \boxtimes \cC^{\otimes \text{op}}$  is called the 
{\it category of Hopf bimodules in $\cC$}. 
Both $A_{\text{can}}$ and its dual object $A_{\text{can}}^*$ belong to ${\sf HopfBimod}(\cC)$. Moreover,  ${\sf HopfBimod}(\cC)$ is a tensor subcategory of $(\cC \boxtimes \cC^{\otimes \text{op}}, \; \odot,\; A_{\text{can}})$ where 
$$\odot:= \rho(\ide_{\cC} \times F) \boxtimes \ide_{\cC^{\otimes \text{op}}}\colon (\cC \boxtimes \cC^{\otimes \text{op}}) \times (\cC \boxtimes \cC^{\otimes \text{op}})  \cong (\cC \times (\cC^{\otimes \text{op}} \boxtimes \cC)) \boxtimes \cC^{\otimes \text{op}} \longrightarrow \cC \boxtimes \cC^{\otimes \text{op}},$$
for $F\colon  \cC^{\otimes \text{op}} \boxtimes \cC \to \Vect, (X,Y) \mapsto \Hom_\cC({}^* Y,X)$ and $\rho$ is the natural action of $\Vect$ on $\cC$.
Continuing the example above, for $\cC:=\lmod{H}(\Vect)$ with $H \in \HopfAlg(\Vect)$, and $A_{\text{can}} = H^*$, we get that ${\sf HopfBimod}(\cC)$ is the usual category of Hopf bimodules over $H$. Moreover, for $M,N \in {\sf HopfBimod}(\cC)$, we obtain that $M \odot N = (M^* \otimes_H N^*)^*$, for $()^*$ denoting the $\Bbbk$-linear dual here. 

\smallskip

By \cite{ENO}*{Theorem~3.3} and \cite{DSS}*{Theorem~3.3.4}, there exists a invertible object $D \in \cC$  so that 
$(D \boxtimes \one) \odot A_{\text{can}} \cong A_{\text{can}}^*$ as objects in ${\sf HopfBimod}(\cC)$. This isomorphism is unique up to a scalar, and $D$ is indeed an invertible object of $\cC$. We call $D$ the {\it distinguished invertible object} of $\cC$. 

\smallskip

We also get  a canonical natural tensor isomorphism $$\xi_{D}(X)\colon D \otimes X \overset{\sim}{\to} X^{4*}\otimes D,$$
defined as follows. Let $F,G\colon  \cC \to \cC$ be two tensor functors, and  consider the category $\cZ(F,G)$ with objects:
$$\Ob(\cZ(F,G)) = \{(V,\sigma_V) ~|~ V \in \cC, \; \sigma_V(-)\colon  V \otimes F(-) \overset{\sim}{\to} G(-) \otimes V ~\text{a $\otimes$-compatible natural isom.}  \},$$
where the compatibility conditions are  \cite{Shi2}*{(3.1),(3.2)}.
Two objects $(V,\sigma_V)$ and $(V',\sigma_{V'})$ of $\cZ(F,G)$ are {\it equivalent} if there is an isomorphism $f\colon  V \overset{\sim}{\to} V'$ so that $\sigma_{V'}(X)(f \otimes \ide_{F(X)}) = (\ide_{G(X)} \otimes f)\sigma_V(X)$ for all $X \in \cC$. This category is not always monoidal, but it is always a finite abelian category \cite{Shi2}*{Theorem~3.4}. We also have that $\cZ(\ide_\cC,\ide_\cC)$ is the monoidal center $\cZ(\cC)$. By \cite{Shi2}*{Lemma~3.3,~(4.3)}, we get equivalences $$\cZ(\ide_\cC,(-)^{4*}_\cC)
\overset{\sim}{\longrightarrow}
\lmod{A_{\text{can}}^{**}}(\cC)
\overset{\sim}{\longrightarrow}
A_{\text{can}}^{**}\text{-}{\sf HopfBimod}(\cC).$$ 
The first equivalence is an isomorphism, 
and the second equivalence is induced by 
$\cC \overset{\sim}{\rightarrow}
{\sf HopfBimod}(\cC)$ given by $Y \mapsto (Y \boxtimes \one) \odot A_{\text{can}}$.
Now the object $A_{\text{can}}^{*}$ in $A_{\text{can}}^{**}$-{\sf HopfBimod}($\cC$) corresponds to pair $(D,\xi_{D})$ in $\cZ(\ide_\cC,(-)^{4*}_\cC)$. Here, $\xi_{D}$ is called the {\it Radford isomorphism} of $\cC$.

\smallskip

Now consider the case $\cC=\lmod{H}(\Vect)$ for a Hopf algebra $H$ over $\Bbbk$, and consider the {\it distinguished grouplike elements} of $H$ and $H^*$ defined as follows (see \cite{KR93}*{Section~1} or \cite{Rad}*{Section 10.5}).
 In this case, $D$ is a one-dimensional module, and so the action is given through an invertible character $\alpha_{H} \in H^*$, i.e. $h\cdot d=\alpha_{H}^{-1}(h)d$ for any $d\in D$. By virtue of $D$ being the distinguished invertible object in $\cC$,
 \begin{equation} \label{eq:alpha_H}
     \text{$\alpha_{H}$ $\in G(H^*)$ is uniquely characterized by 
$\alpha_{H}(h)\Lambda=\Lambda h,$ for all $h\in H,$}
 \end{equation}
 for a choice of non-zero left integral $\Lambda$ for $H$.
 The Radford isomorphism is now given by the action of an element
 \begin{equation} \label{eq:g_H}
   \text{$g_{H}$ $\in G(H)$, which is uniquely characterized by
$p\lambda = \ev(p\otimes g_{H})\lambda,$  for all $p\in H^*,$}
 \end{equation} 
where $\lambda$ is a non-zero right integral of $H^*$. 
Explicitly, if $D=\Bbbk v$, then for any $X \in \cC$ and $x \in X$ we get
$\xi_{D}(X)(v\otimes x)= g_{H}\cdot x\otimes v.$
Now, Radford's $S^4$-formula \cite{Rad76}, 
$$S^4(h)=\alpha_{H}^{-1}(h_{(1)})\;g_{H}\;h_{(2)}\;g_{H}^{-1}\;\alpha_{H}(h_{(3)}),$$ implies that
$(D,\xi_{D})$ defines an object in $\cZ(\ide_\cC,(-)^{4*})$.

\smallskip

 Recall that a finite tensor category $\cC$ is \emph{unimodular} if $D=\one$ \cite{EGNO}*{Section~6.5}.
When $\cC$ is a factorizable finite tensor category, then $\cC$ is unimodular  \cite{EGNO}*{Proposition~8.10.10}.

\subsection{Ribbon structures on the center}\label{sec:centerribbon}

In this section, we recall results of \cite{Shi2} and \cite{KR93} on the existence of ribbon structures on the center $\cZ(\cC)$ of a finite tensor category, using the pair $(D, \xi_{D})$ defined in the previous section.

\begin{definition}[$\mathsf{Sqrt}_\cC(D,\xi_{D})$] \label{def:sqrtset}
Let $\cC$ be a finite tensor category and recall $(D,\xi_{D})$  from Section~\ref{sec:Ddelta}. We define $\mathsf{Sqrt}_\cC(D,\xi_{D})$ to be the set of equivalence classes of invertible objects $(V,\sigma_V)$ in $\cZ(\ide_\cC,(-)^{**})$ such that there exists an isomorphism $\nu\colon V^{**}\otimes V\isomorph D$ such that the following diagram commutes:
\begin{align}\label{diag:Shimizu}
\vcenter{\hbox{
\xymatrix{
V^{**} \otimes V\otimes X\ar[rrr]^{\ide_{V^{**}}\otimes \sigma_V(X)} \ar[d]_{\nu\otimes \ide_X}&&& V^{**}\otimes X^{**}\otimes V \ar[rrr]^{\sigma_V(X)^{**}\otimes \ide_V} &&&X^{4*}\otimes V^{**}\otimes V\ar[d]^{\ide_{X^{4*}}\otimes \nu}\\
D \otimes X\ar[rrrrrr]^{\xi_D(X)} &&&&&& X^{4*}\otimes D.
}}}
\end{align}
\end{definition}

\smallskip

\begin{theorem}[\cite{Shi2}*{Sections 5.2, 5.4, 5.5}]
\label{thm:Shimizuribbon}
The set of ribbon structures on $\cZ(\cC)$ is in bijection with the set $\mathsf{Sqrt}_\cC(D,\xi_{D})$.
 In particular, if we take $(V,\; \sigma_V) \in \mathsf{Sqrt}_\cC(D,\xi_{D})$, then the corresponding ribbon structure on $\cZ(\cC)$ is given by $\theta = \phi^{-1} j$, where $\phi$ is the  Drinfeld isomorphism of $\cZ(\cC)$ from~\eqref{Driniso} and  $j$ is a pivotal structure on $\cZ(\cC)$ given by 
 $$j_X := (\ide_{X^{**}} \otimes \coev_V^{-1})(\sigma_V(X) \otimes \ide_{V^*})(c_{X,V} \otimes \ide_{V^*})(\ide_X \otimes \coev_V)\colon X \overset{\sim}{\to} X^{**}, $$
 for $X \in \cC$.
\qed
\end{theorem}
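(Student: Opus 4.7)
The plan is to factor the theorem through pivotal structures on $\cZ(\cC)$, and then parameterize the latter using data in the twisted center $\cZ(\ide_\cC,(-)^{**})$.

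First, I would recall that on any braided rigid category $\cD$, the Drinfeld isomorphism $\phi$ of~\eqref{Driniso} mediates a bijection between ribbon twists $\theta$ and pivotal structures $j$ on $\cD$ via $j = \phi\,\theta$, equivalently $\theta = \phi^{-1} j$: under this correspondence, the twist axiom together with the duality condition $(\theta_X)^* = \theta_{X^*}$ translates into the monoidality of $j$ together with a compatible self-duality. For $\cD = \cZ(\cC)$, it therefore suffices to classify the pivotal structures on $\cZ(\cC)$ that satisfy this extra ribbon compatibility.

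Second, I would parameterize such pivotal structures using invertible objects $(V,\sigma_V) \in \cZ(\ide_\cC,(-)^{**})$ with $V$ invertible in $\cC$, so that $\coev_V$ is invertible. The explicit formula for $j_X$ in the theorem statement then produces a candidate morphism $X \to X^{**}$ in $\cZ(\cC)$; checking that $j_X$ respects half-braidings uses the naturality of $\sigma_V$ combined with the hexagon axiom for $c_{X,V}$ and the duality axioms on $V$. Conversely, any pivotal structure on $\cZ(\cC)$ of the required type can be realized in this shape by tracing through the equivalences $\cZ(\ide_\cC,(-)^{4*}) \simeq \lmod{A_{\text{can}}^{**}}(\cC) \simeq A_{\text{can}}^{**}\text{-}\mathsf{HopfBimod}(\cC)$ recalled in Section~\ref{sec:Ddelta}, which representably classify natural tensor-isomorphism data of this form.

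Third, I would verify that the diagram~\eqref{diag:Shimizu}, together with the isomorphism $\nu\colon V^{**}\otimes V \isomorph D$, encodes precisely the monoidality $j_{X\otimes Y} = j_X \otimes j_Y$ and the ribbon duality compatibility. The key observation is that ``squaring'' $(V,\sigma_V)$ via $\nu$ produces an object of $\cZ(\ide_\cC,(-)^{4*})$, and the commutativity of~\eqref{diag:Shimizu} forces this square to be identified with $(D,\xi_D)$; under the aforementioned equivalences this is exactly what the monoidality and ribbon compatibility of $j$ amount to. Passing to equivalence classes of triples $(V,\sigma_V,\nu)$ then gives the claimed bijection with ribbon structures.

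The main obstacle I expect is the monoidality verification: showing that~\eqref{diag:Shimizu} is precisely equivalent to $j_{X\otimes Y} = j_X \otimes j_Y$ requires a careful diagrammatic calculation threading $\coev_V$, $\coev_V^{-1}$, $\sigma_V$, and $\xi_D$ through both sides, with the invertibility of $V$ supplying the cancellations needed to cut the resulting tensor words down to $X^{**} \otimes Y^{**}$. A secondary subtlety is showing that equivalent triples $(V,\sigma_V,\nu)$ yield the same $j_X$; this reduces to the observation that a compatible isomorphism $f\colon V \isomorph V'$ is absorbed by the pair $\coev_V, \coev_V^{-1}$ appearing at the two ends of the formula defining $j_X$.
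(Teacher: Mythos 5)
The paper does not prove Theorem 3.2 itself; it is cited from Shimizu's work and stamped with \qed. So the comparison is really against the structure of Shimizu's argument, which the paper signals through its own proof of the Hopf-algebraic specialization, Proposition \ref{prop:KRvsShi}, and through Remark \ref{rem:sph-rib} and Proposition \ref{prop:Hspherical}.

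Your general route is the correct one: pass from ribbon twists to pivotal structures via the Drinfeld isomorphism, and then parameterize the relevant pivotal data by invertible objects in the twisted center $\cZ(\ide_\cC,(-)^{**})$. But there is a genuine error in your step 3 about what diagram \eqref{diag:Shimizu} is responsible for. The monoidality $j_{X\otimes Y} = j_X \otimes j_Y$ does \emph{not} come from \eqref{diag:Shimizu}. It already follows from the fact that $(V,\sigma_V)$ is an object of $\cZ(\ide_\cC,(-)^{**})$, whose defining ``$\otimes$-compatibility'' conditions \cite{Shi2}*{(3.1),(3.2)} are precisely what make the formula $j_X = (\ide_{X^{**}}\otimes\coev_V^{-1})(\sigma_V(X)\otimes\ide_{V^*})(c_{X,V}\otimes\ide_{V^*})(\ide_X\otimes\coev_V)$ multiplicative in $X$, together with the invertibility of $V$ and the tensor-product formula for half-braidings in $\cZ(\cC)$. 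The diagram \eqref{diag:Shimizu} is a second-order condition: it says that the ``square'' $(V^{**}\otimes V, \;\sigma_V^{**}\otimes\sigma_V)$ is identified, via $\nu$, with the Radford pair $(D,\xi_D)$ in $\cZ(\ide_\cC,(-)^{4*})$. This is exactly the extra ribbon compatibility $\theta_{X^*}=(\theta_X)^*$ for $\theta=\phi^{-1}j$, and nothing about monoidality. The paper's own proof of Proposition \ref{prop:KRvsShi} makes this separation explicit in the Hopf case: condition \eqref{eq:S2} is what makes $\sigma_V$ a morphism of $H$-modules (i.e., lands you in the twisted center), while $a^2=g_H$ and $\zeta^2=\alpha_H$ come from the commutativity of \eqref{diag:Shimizu} and the existence of $\nu$. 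Conflating the two would, for instance, give the wrong answer in Example \ref{expl:taft}, where $\cZ(\ide_\cC,(-)^{**})$ is nonempty for all $n$ but $\mathsf{Sqrt}_\cC(D,\xi_D)$ is empty exactly when $n$ is even.

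There is also a gap in your step 2 ``conversely'' clause. You invoke the chain $\cZ(\ide_\cC,(-)^{4*})\simeq\lmod{A_{\text{can}}^{**}}(\cC)\simeq A_{\text{can}}^{**}\text{-}\mathsf{HopfBimod}(\cC)$ from Section \ref{sec:Ddelta} to argue that all pivotal structures on $\cZ(\cC)$ ``of the required type'' are realized. But those equivalences concern the $4*$-twisted center, which is where $(D,\xi_D)$ lives, not $\cZ(\ide_\cC,(-)^{**})$, where the square roots $(V,\sigma_V)$ live, so the reference does not supply the classification you need. The hard content of Shimizu's theorem (and the step your sketch replaces with the phrase ``representably classify'') is precisely the bijection between invertible objects of $\cZ(\ide_\cC,(-)^{**})$ and pivotal structures on $\cZ(\cC)$; Shimizu establishes this via the coend Hopf algebra of $\cZ(\cC)$ and its group-like elements, which is not recoverable from the material recalled in Section \ref{sec:Ddelta} alone. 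You need to either cite this as a black box or supply an argument for it.
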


In the case when $\cC=\lmod{H}(\Vect)$, Shimizu's theorem specializes to the following classical result of Kauffman--Radford.

\begin{theorem}[\cite{KR93}*{Theorem~3}]\label{thm:KR}
Let $g_H\in H$ and $\alpha_H\in H^*$ be the distinguished grouplike elements defined in \eqref{eq:alpha_H}, \eqref{eq:g_H}. 
Then  there is a bijection between the sets 
\begin{gather}
\left\{(\zeta,a)\in G(H^*)\times G(H)\;\big|\; \zeta^2=\alpha_H, \; \; a^2=g_H, \;\; \text{satisfying \eqref{eq:S2}} \right\},\qquad \text{ where }\nonumber \\
\label{eq:S2}
   S^2(h)=\zeta^{-1}(h_{(1)})ah_{(2)}a^{-1}\zeta(h_{(3)}), \qquad \text{ for all }h\in H,
\end{gather}
and the set of ribbon elements of the Drinfeld double, $\Drin(H)$, cf. \eqref{ribbonelement}.  

The bijection is given by sending a pair $(\zeta,a)$ to $S(R^{(2)})R^{(1)}(\zeta^{-1} \otimes a^{-1})$, for $R$ and $S$ the R-matrix and antipode of $\Drin(H)$, respectively.  \qed
\end{theorem}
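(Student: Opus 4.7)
The plan is to derive this theorem as a specialization of Shimizu's Theorem~\ref{thm:Shimizuribbon} to $\cC=\lmod{H}(\Vect)$. Recall from Section~\ref{sec:Hmod} the braided tensor equivalence $\lmod{\Drin(H)}(\Vect)\simeq\cZ(\lmod{H}(\Vect))$, and from Section~\ref{sec:ribbon} and \eqref{ribbonelement} that ribbon elements $v\in\Drin(H)$ are in bijection with ribbon structures on $\lmod{\Drin(H)}$ (via $\theta_X=v^{-1}\cdot-$). It therefore suffices to identify the set $\mathsf{Sqrt}_\cC(D,\xi_{D})$ with the admissible pairs $(\zeta,a)$ in the theorem, and then to unwind Shimizu's explicit formula for $\theta$ into the stated ribbon element.

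For the first task, unpack an invertible object $(V,\sigma_V)\in\cZ(\ide_\cC,(-)^{**})$. Invertibility forces $V$ one-dimensional, so the $H$-action is given by a character $\zeta^{-1}\in G(H^*)$. Fixing a basis $v\in V$, the natural transformation $\sigma_V(X)\colon V\otimes X\to X^{**}\otimes V$ must then have the form $\sigma_V(X)(v\otimes x)=a\cdot x\otimes v$ for a unique invertible element $a\in H$, where $X^{**}$ is identified with $X$ via the $S^2$-twisted action. $H$-linearity of $\sigma_V(X)$ translates directly into the formula \eqref{eq:S2}, while the $\otimes$-compatibility axioms \cite{Shi2}*{(3.1),(3.2)} applied to $X\otimes Y$ force $\Delta(a)=a\otimes a$, i.e.\ $a\in G(H)$.

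Next, the square-root condition in Definition~\ref{def:sqrtset}. Since $\zeta$ is an algebra map and $S$ an anti-algebra map, $\zeta\circ S=\zeta^{-1}$ and hence $\zeta\circ S^2=\zeta$, so $V^{**}$ shares the character $\zeta^{-1}$ of $V$; therefore $V^{**}\otimes V$ has character $\zeta^{-2}$. Comparing with the character $\alpha_H^{-1}$ of $D$ (cf.\ \eqref{eq:alpha_H}), the existence of $\nu\colon V^{**}\otimes V\isomorph D$ is equivalent to $\zeta^2=\alpha_H$. Evaluating the diagram \eqref{diag:Shimizu} on $v\otimes v\otimes x$, the two copies of $\sigma_V$ contribute the factor $a^2$ while $\xi_{D}(X)$ acts by $g_H$ by \eqref{eq:g_H}, so commutativity reduces to the scalar equation $a^2=g_H$. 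This establishes the bijection between $\mathsf{Sqrt}_\cC(D,\xi_{D})$ and the admissible pairs $(\zeta,a)$.

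It remains to identify Shimizu's ribbon twist $\theta=\phi^{-1}j$ with the action of $v^{-1}$ for $v=S(R^{(2)})R^{(1)}(\zeta^{-1}\otimes a^{-1})\in\Drin(H)$. Transferring via the equivalence $\cZ(\lmod{H})\simeq\lmod{\Drin(H)}$, the Drinfeld isomorphism $\phi$ from \eqref{Driniso} corresponds to left multiplication by the Drinfeld element $S(R^{(2)})R^{(1)}\in\Drin(H)$; meanwhile, Shimizu's pivotal structure $j$ built from $(V,\sigma_V)$ contributes exactly the grouplike factor $\zeta^{-1}\otimes a^{-1}\in H^{*\cop}\otimes H=\Drin(H)$, coming from the one-dimensional $V$ together with its half-braiding. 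Composing and inverting gives the ribbon element in the stated form. The main obstacle is this last bookkeeping computation: matching the abstract composition $\phi^{-1}j$ on a generic $\Drin(H)$-module with multiplication by $v^{-1}$, and verifying that the grouplike data from $(V,\sigma_V)$ assembles into $\zeta^{-1}\otimes a^{-1}$ in the correct order of tensor factors inside $\Drin(H)$.
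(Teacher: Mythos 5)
The paper itself does not prove Theorem~\ref{thm:KR}; it is quoted verbatim from Kauffman--Radford \cite{KR93} with a citation and a \qed, and is then used as a black box. Your proposal is therefore a genuinely different route: you aim to \emph{derive} the Kauffman--Radford theorem as the $\cC=\lmod{H}(\Vect)$ specialization of Shimizu's Theorem~\ref{thm:Shimizuribbon}. The first half of your argument---unpacking an invertible object $(V,\sigma_V)\in\cZ(\ide_\cC,(-)^{**})$ into a pair $(\zeta,a)\in G(H^*)\times G(H)$ and translating the existence of $\nu$ and the commutativity of diagram~\eqref{diag:Shimizu} into $\zeta^2=\alpha_H$, $a^2=g_H$, and~\eqref{eq:S2}---is essentially the content of the paper's Proposition~\ref{prop:KRvsShi}, and it is correct. (The paper states and proves that proposition \emph{after} Theorem~\ref{thm:KR}, using the theorem only to name the set of pairs, not its conclusion, so there is no circularity in your reversed presentation.) Combined with the standard identification of ribbon structures on $\lmod{\Drin(H)}$ with ribbon elements of $\Drin(H)$, this already gives part~(a) of the theorem: the \emph{existence} of a bijection.

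Where the proposal has a genuine gap is the explicit formula $(\zeta,a)\mapsto S(R^{(2)})R^{(1)}(\zeta^{-1}\otimes a^{-1})$, which you correctly identify as ``the main obstacle'' but then only sketch. The claims that $\phi$ ``corresponds to left multiplication by $S(R^{(2)})R^{(1)}$'' and that Shimizu's $j$ ``contributes exactly the grouplike factor $\zeta^{-1}\otimes a^{-1}$'' are plausible but are precisely the kind of statements where convention mismatches (which canonical identification $X\cong X^{**}$ is used; whether the Drinfeld element acts on the left or its inverse; whether the twist is $v$ or $v^{-1}$) introduce errors. In particular, if $\phi_X$ is multiplication by $u=S(R^{(2)})R^{(1)}$ and $j_X$ is multiplication by a grouplike $g$, then $\theta=\phi^{-1}j$ gives $v^{-1}=u^{-1}g$, i.e.\ $v=g^{-1}u$, which matches $u(\zeta^{-1}\otimes a^{-1})$ only after one shows $u$ commutes with the grouplike and sorts out whether $g=\zeta^{-1}\otimes a^{-1}$ or its inverse appears. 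Moreover, $j_X$ in Theorem~\ref{thm:Shimizuribbon} is given by a composite involving $\sigma_V(X)$, the half-braiding $c_{X,V}$, and co/evaluations on $V$; showing this reduces to multiplication by a single grouplike in $\Drin(H)=H^{*\cop}\otimes H$ is an honest computation, not mere bookkeeping. As it stands, part~(b) is asserted rather than proved.
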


The precise connection between the above results of Kauffman--Radford and Shimizu is given by the following proposition.

\begin{proposition}\label{prop:KRvsShi}
 Let $H$ be a finite-dimensional Hopf algebra  and $\cC=\lmod{H}$. Then there is a bijection between the set of pairs $(\zeta,a)$ of Theorem \ref{thm:KR} and the set $\mathsf{Sqrt}_\cC(D,\xi_{D})$ of Definition \ref{def:sqrtset}.
\end{proposition}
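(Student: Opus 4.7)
The plan is to unpack Definition~\ref{def:sqrtset} in the case $\cC = \lmod{H}$ and match its data directly with the pairs $(\zeta,a)$ from Theorem~\ref{thm:KR}. Since invertible objects of $\lmod{H}$ are one-dimensional $H$-modules, any invertible $V$ in $\cZ(\ide_\cC,(-)^{**})$ is of the form $V = \Bbbk_{\zeta^{-1}}$ for a group-like $\zeta \in G(H^*)$ (encoding the action $h\cdot 1 = \zeta^{-1}(h)\cdot 1$). By naturality in $X$ and the Yoneda lemma applied to the regular module, the component $\sigma_V(X)\colon V\otimes X \to X^{**}\otimes V$ is determined by an element $a\in H$ satisfying $\sigma_V(H)(1\otimes 1) = a\otimes 1$ under the identification $H^{**}\cong H$, with $\sigma_V(X)(1\otimes x) = (a\cdot x)\otimes 1$.

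I would then verify that the tensor-compatibility axioms \cite{Shi2}*{(3.1),(3.2)} for $(V,\sigma_V)$ force $\Delta(a)=a\otimes a$ and $\varepsilon(a)=1$, so that $a\in G(H)$. The requirement that each $\sigma_V(X)$ be $H$-linear, using the action of $\zeta^{-1}$ on $V$ together with the twisted action $h\mapsto S^2(h)$ on $X^{**}$, unpacks to the identity
\begin{align*}
\sum \zeta^{-1}(h_{(1)})\,a\,h_{(2)} \;=\; \sum \zeta^{-1}(h_{(2)})\,S^2(h_{(1)})\,a, \qquad \text{for all } h\in H.
\end{align*}
Convolving this identity on the right by $\zeta$ and applying $\zeta^{-1}\star\zeta=\varepsilon$ yields the $S^2$-formula \eqref{eq:S2}; conversely, convolving the $S^2$-formula by $\zeta^{-1}$ recovers the $H$-linearity identity, so the two are equivalent.

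It remains to interpret the square-root condition and diagram \eqref{diag:Shimizu}. Because $\zeta\in G(H^*)$ satisfies $\zeta\circ S = \zeta^{-1}$, the module $V^{**}\otimes V$ is one-dimensional with character $\zeta^{-2}$; since $D$ has character $\alpha_H^{-1}$, an $H$-module isomorphism $\nu\colon V^{**}\otimes V \isomorph D$ exists if and only if $\zeta^2 = \alpha_H$. The commutativity of \eqref{diag:Shimizu} then compares two iterated applications of $\sigma_V$, which on $X$ amount to multiplication by $a^2$, with the Radford isomorphism $\xi_D(X)$, given by multiplication by $g_H$; this forces $a^2 = g_H$.

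Finally, I would observe that two equivalent pairs $(V,\sigma_V)\sim(V',\sigma_{V'})$ in $\cZ(\ide_\cC,(-)^{**})$ yield the same data $(\zeta,a)$, because every isomorphism between one-dimensional $H$-modules is given by a non-zero scalar that leaves the character $\zeta$ and the element $a$ unchanged. Hence the assignment $(V,\sigma_V)\mapsto(\zeta,a)$ is a well-defined bijection onto the set of Theorem~\ref{thm:KR}. The main technical obstacle is the Sweedler-notation manipulation equating $H$-linearity with the $S^2$-formula, handled by the convolution argument sketched above; the remainder is direct unpacking of the definitions, and the resulting bijection is consistent with the fact that both sides parameterize ribbon structures on $\cZ(\cC)\simeq \lmod{\Drin(H)}$ via Theorems~\ref{thm:Shimizuribbon} and~\ref{thm:KR}.
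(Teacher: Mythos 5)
Your proposal is correct and takes essentially the same approach as the paper's proof: both sides exploit that invertible objects of $\lmod{H}$ are one-dimensional to extract $\zeta$, use the reconstruction of $H$ as natural endomorphisms of the forgetful functor (your ``Yoneda on the regular module'' is the same mechanism as the paper's $H\cong\End(F)$) to extract $a$, identify $H$-linearity of $\sigma_V$ with \eqref{eq:S2}, and read off $\zeta^2=\alpha_H$ and $a^2=g_H$ from the isomorphism $\nu$ and the diagram \eqref{diag:Shimizu}. You are slightly more explicit than the paper on a few points — deriving that $a$ is grouplike from the $\otimes$-compatibility axioms, spelling out the convolution argument relating $H$-linearity to \eqref{eq:S2}, and checking well-definedness on equivalence classes — but these are refinements of the same argument rather than a different route.
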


\begin{proof}  
Given a pair $(\zeta,a)$ as in Theorem \ref{thm:KR}, we define $(V,\sigma_V)\in \cZ(\ide,(-)^{**})$ of $\mathsf{Sqrt}_\cC(D,\xi_{D})$ as follows. First, $V$ is the one-dimensional $H$-module with action $h\cdot v=\zeta^{-1}(h)v$ for any $v\in V,h\in H$. Second, the isomorphism $\sigma_V(X)\colon V\otimes X\to X^{**}\otimes V$ is defined by 
$\sigma_X(v\otimes x)=ax\otimes v$ for all $v\in V, x\in X.$
This isomorphism defines an element in $\mathsf{Sqrt}_\cC(D,\xi_{D})$ provided that $(\zeta,a)$ satisfy the conditions of Theorem \ref{thm:KR}. In particular, \eqref{eq:S2} implies that $\sigma_V(X)$ is a morphism of $H$-modules.

Conversely, assume given a pair $(V, \sigma_V) \in \mathsf{Sqrt}_\cC(D,\xi_{D})$. Then $V$ is an invertible $H$-module and thus is $1$-dimensional. Fix a generator $v\in V$. Then we obtain $\zeta$ such that $h\cdot v=\zeta^{-1}(h)v$. The isomorphism $\nu\colon V^{**}\otimes V\to D$ of $H$-modules implies that $\zeta^{-2}=\alpha_H^{-1}$ and hence $\zeta^2=\alpha_H$. We obtain an element $a \in G(H)$ so that $a^2 = g_H$, satisfying, \eqref{eq:S2} as follows. Recall that there is an isomorphism of Hopf algebras $H\cong \End(F)$, where $F\colon \cC\to \Vect$ is the forgetful functor, for details, see e.g. \cite{EGNO}*{Sections 5.2--5.3}. Here, $h \in H$ gets sent to $\{h_X\colon  F(X) \to F(X), x \mapsto h \cdot x\}_{X \in {\sf Ob}(\cC)}$.
Further, for the 1-dimensional $H$-module $V$ fixed above, there are isomorphisms of $\Bbbk$-vector spaces
$f'_X\colon  F(V)\otimes F(X)\isomorph F(X),  v\otimes x\mapsto x$ and $f''_X\colon  F(X) \isomorph F(X) \otimes F(V), x \mapsto x \otimes v,$ natural in $X$.
So by identifying the $\Bbbk$-vector spaces $F(X)$ and $F(X^{**})$, we obtain that the natural isomorphism
$\sigma_V(X)\colon F(V)\otimes F(X)\isomorph F(X)\otimes F(V)$ is of the form $f''_X \circ a_X\circ f'_X$ and must be given by $v \otimes x \mapsto (a \cdot x) \otimes v$ for some $a \in H$. 
The assumption that $\sigma_V$ defines a natural isomorphism $V\otimes (-)\isomorph (-)^{**}\otimes V$ of $H$-modules implies condition \eqref{eq:S2}. The diagram in \eqref{diag:Shimizu} implies that $a^2=g_H$.
\end{proof}

\subsection{Non-semisimple spherical categories}
\label{sec:nonss-sph}

Using the distinguished invertible object $D$ defined in Section~\ref{sec:Ddelta}, we obtain a notion of sphericality for non-semisimple finite tensor categories.

 \begin{definition} [\cite{DSS}*{Definition 3.5.2}] \label{def:spherical}
A pivotal finite tensor category $(\cC, \otimes, \one, j)$ is {\it spherical} if  there is an isomorphism $\nu\colon \one \overset{\sim}{\to} D$ so that the following diagram commutes
\begin{align}\label{eq:spherical}
\vcenter{\hbox{
\xymatrix{
X\ar[rr]^{j_{X}^{**}\; j_{X}}\ar[d]_{\nu\otimes \ide_X}&&X^{4*}\ar[d]^{\ide_{X^{4*}}\otimes \nu}\\
D \otimes X\ar[rr]^{\xi_D(X)} && X^{4*}\otimes D.
}}}
\end{align}
\end{definition}

In fact, if $\cC$ is semisimple; then $\cC$ is spherical precisely when $\cC$ is trace-spherical; see \cite{DSS}*{Proposition 3.5.4}.

\smallskip

\begin{remark} \label{rem:sph-rib}
On the one hand, a spherical category in the sense above gives a special case of a tensor category $\cC$ satisfying the assumption that $\mathsf{Sqrt}_\cC(D,\xi_{D})$ from Definition~\ref{def:sqrtset}  is non-empty; namely, $(\one,j)\in \mathsf{Sqrt}_\cC(D,\xi_{D})$. 
On the other hand, Example \ref{expl:taft} later illustrates that there are categories $\cC$ satisfying $\mathsf{Sqrt}_\cC(D,\xi_{D})\neq \varnothing$ that do not have a spherical structure.
\end{remark}

\begin{proposition}[$\mathsf{SPiv}(H)$] \label{prop:Hspherical} Take $\cC=\lmod{H}(\Vect)$ and recall \eqref{eq:alpha_H}, \eqref{eq:g_H}. Then $\cC$ is spherical in the sense of Definition \ref{def:spherical} if and only if  $\alpha_H=\varepsilon$ and 
\begin{align*}
    \mathsf{SPiv}(H):= \big\{ a\in G(H)\mid a^2=g_H,\; S^2(h)=aha^{-1}, \text{ for all $h\in H$}\big\}\neq \varnothing.
\end{align*}
In this case, there is a bijection between pivotal structures $j$ on $\cC$ such that $(\cC,j)$ is spherical and the set $\mathsf{SPiv}(H)$.
\end{proposition}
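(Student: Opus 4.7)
The plan is to combine the spherical diagram \eqref{eq:spherical} with the classical correspondence between pivotal structures on $\lmod{H}(\Vect)$ and \emph{pivotal elements} of $H$: pivotal structures $j$ on $\lmod{H}(\Vect)$ are in bijection with grouplike elements $a\in G(H)$ satisfying $S^2(h)=aha^{-1}$ for all $h\in H$; under this bijection the corresponding pivotal structure $j^a_X\colon X\isomorph X^{**}$ is the composition of the canonical $\Bbbk$-linear identification with the action of $a$.

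First, the distinguished invertible object $D$ is the one-dimensional $H$-module with action $h\cdot d=\alpha_H^{-1}(h)\,d$, so an $H$-linear isomorphism $\nu\colon\one\isomorph D$ exists if and only if $\alpha_H=\varepsilon$. Assuming this and fixing such a $\nu$, the Radford isomorphism $\xi_D(X)\colon D\otimes X\isomorph X^{4*}\otimes D$ becomes, under the canonical $\Bbbk$-linear identifications $X\cong X^{4*}$ and $\one\cong D$ via $\nu$, the action of $g_H$ on $X$; see the description following \eqref{eq:g_H}. Second, for $a\in G(H)$ defining a pivotal structure $j^a$, a direct unwinding (using $a\in G(H)$, and hence $S(a)=a^{-1}$ and $S^2(a)=a$) shows that the composition
\[
j^a_{X^{**}}\circ j^a_X\colon X\isomorph X^{4*}
\]
acts, under the canonical identification $X\cong X^{4*}$, as multiplication by $a^2$.

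Substituting these two descriptions into \eqref{eq:spherical} shows that the spherical diagram commutes if and only if $a^2=g_H$ in $H$. Therefore $(\cC,j^a)$ is spherical precisely when $\alpha_H=\varepsilon$ and $a\in\mathsf{SPiv}(H)$; this simultaneously proves the characterization of sphericality and provides the bijection between spherical pivotal structures on $\cC$ and $\mathsf{SPiv}(H)$, obtained as the restriction of the classical pivotal-structure bijection. The main technical obstacle is essentially bookkeeping: carefully tracking the left-duality conventions of $\lmod{H}(\Vect)$ through the antipode in order to verify the explicit forms of $j^a_{X^{**}}\circ j^a_X$ and of $\xi_D(X)$ under the identifications $X\cong X^{**}\cong X^{4*}$, which involves no new conceptual input beyond Section~\ref{sec:rigid} and the definition of $g_H$ in \eqref{eq:g_H}.
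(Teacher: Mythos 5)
Your proof is correct and reaches the same conclusion as the paper's, but by a slightly different route. The paper invokes Proposition~\ref{prop:KRvsShi} (the bijection between $\mathsf{Sqrt}_\cC(D,\xi_D)$ and the Kauffman--Radford pairs $(\zeta,a)$) as a black box, then specializes to the case $V\cong\one$ (equivalently, $\zeta=\varepsilon$) using Remark~\ref{rem:sph-rib}. You instead bypass $\mathsf{Sqrt}_\cC(D,\xi_D)$ entirely and argue directly from the classical dictionary between pivotal structures on $\lmod{H}$ and grouplike elements $a$ satisfying $S^2=a(-)a^{-1}$, identifying $j^{**}j$ with multiplication by $a^2$ and $\xi_D$ (after trivializing $D$) with multiplication by $g_H$. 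In effect, you re-derive the $V=\one$ slice of Proposition~\ref{prop:KRvsShi} by hand. The paper's route is shorter given that Proposition~\ref{prop:KRvsShi} is already available from the preceding discussion and serves other purposes in the section; your route is more self-contained and avoids the more general machinery. The one thing you should make explicit, since you describe it only as ``direct unwinding'', is the verification that $(j^a_X)^{**}\circ j^a_X$ corresponds to $L_{a^2}$ under $\iota_{X^{**}}\circ\iota_X\colon X\isomorph X^{4*}$, which follows from $(j^a_X)^{**}=\iota_{X^{**}}\circ j^a_X\circ\iota_X^{-1}$ and $j^a_X=\iota_X\circ L_a$; also note that the choice of $\nu$ is immaterial because both vertical maps in the diagram \eqref{eq:spherical} scale by the same factor, so commutativity is unchanged.
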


\begin{proof}
First assume that $\cC=\lmod{H}(\Vect)$ is spherical. Then, by definition, $D\cong \one$, which implies that $\alpha_H=\varepsilon$. By Remark~\ref{rem:sph-rib}, $(\one, j)\in \mathsf{Sqrt}_\cC(D,\xi_{D})$. Using Proposition \ref{prop:KRvsShi}, this element of $\mathsf{Sqrt}_\cC(D,\xi_{D})$ corresponds to a pair $(\zeta,a)$ satisfying \eqref{eq:S2}, with $\zeta=\varepsilon$, such that $a^2=g_H$. Thus, $a\in \mathsf{SPiv}(H)$. 
From Proposition \ref{prop:KRvsShi} it further follows that there is a bijection between the subset of 
$\mathsf{Sqrt}_\cC(D,\xi_{D})$ of pairs $(V,\sigma_V)$ such that $V\cong\one$ and pairs $(\varepsilon, a)$ satisfying the conditions of Theorem~\ref{thm:KR}  (i.e $a^2=g_H$ and \eqref{eq:S2}, or equivalently, $a\in \mathsf{SPiv}(H)$).

Conversely, assume $\alpha_H=\varepsilon$ and $a\in  \mathsf{SPiv}(H)\neq \varnothing$. Again under the bijection of Proposition \ref{prop:KRvsShi}, the pair $(\varepsilon,a)$ corresponds an element $(\one,\sigma_\one)\in \mathsf{Sqrt}_\cC(D,\xi_{D})$. 
In particular, $(\one,\sigma_\one)$ is an element of $\cZ(\ide_{\cC},(-)^{**})$ which implies, using the convention $\one\otimes X=X=X\otimes \one$, that $j:=\sigma_\one$ is a pivotal structure for $\cC$, which, by construction, satisfies \eqref{eq:spherical}. Thus, $\cC$ is spherical.
\end{proof}

Examples of spherical categories obtained from Nichols algebras can be found later in the text, see Remark \ref{rmk:UBqmod}(3) and Example \ref{expl-new}.

\smallskip

Next, we show that a source of non-semisimple spherical categories is given by unimodular ribbon categories, cf. \cite{EGNO}*{Proposition~8.10.12} in the semisimple case. Recall that in the semisimple case, every finite tensor category is unimodular \cite{EGNO}*{Remark~6.5.9}.

\begin{proposition}\label{prop:ribbonspherical}
 Any unimodular finite ribbon category is spherical in the sense of Definition~\ref{def:spherical}. 
\end{proposition}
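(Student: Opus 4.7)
The plan is: since $\cC$ is unimodular, fix some isomorphism $\nu_0\colon \one \isomorph D$; since $\cC$ is ribbon with twist $\theta$, \eqref{eq:ribbonpivotal} supplies the pivotal structure $j_X = \phi_X \circ \theta_X$. I aim to exhibit a rescaling $\nu := \lambda \nu_0$ with $\lambda \in \Bbbk^\times$ that renders \eqref{eq:spherical} commutative for this $j$. The first step is to simplify $j^{**} \circ j$: naturality of the twist $\theta$ applied to the Drinfeld isomorphism $\phi_X \colon X \to X^{**}$ gives $\theta_{X^{**}} \circ \phi_X = \phi_X \circ \theta_X$, and hence
\[
j_{X^{**}} \circ j_X = \phi_{X^{**}} \circ \theta_{X^{**}} \circ \phi_X \circ \theta_X = \phi_{X^{**}} \circ \phi_X \circ \theta_X^2.
\]

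Next, I would view both $(\one, j^{**} j)$ and $(D, \xi_D)$ as objects of $\cZ(\ide_\cC, (-)^{4*})$---the former is well-defined because $j$ is a pivotal structure, so $j^{**}j$ is a monoidal natural isomorphism $\ide_\cC \Rightarrow (-)^{4*}$. Under Shimizu's equivalence $\cZ(\ide_\cC, (-)^{4*}) \simeq A_{\text{can}}^{**}\text{-}{\sf HopfBimod}(\cC)$ recalled in Section~\ref{sec:Ddelta}, the pair $(D, \xi_D)$ corresponds to $A_{\text{can}}^*$, while $(\one, j^{**}j)$ corresponds to $A_{\text{can}}$ equipped with a particular $A_{\text{can}}^{**}$-module structure encoding the ribbon data. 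Unimodularity of $\cC$ provides an isomorphism $A_{\text{can}} \simeq A_{\text{can}}^*$ of Hopf bimodules, so the underlying objects agree. The remaining task is then to verify that the two $A_{\text{can}}^{**}$-module structures also match after suitable scalar rescaling $\nu_0 \mapsto \nu$, yielding an iso $(\one, j^{**}j) \simeq (D, \xi_D)$ in $\cZ(\ide_\cC, (-)^{4*})$---which is precisely the commutativity of \eqref{eq:spherical}.

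The heart of the proof is therefore the identity $\phi_{X^{**}} \circ \phi_X \circ \theta_X^2 = \xi_D(X)$ (after the identification via $\nu$). In the Hopf algebraic model $\cC = \lmod{H}$ for a unimodular ribbon Hopf algebra $H$ with Drinfeld element $u$, ribbon element $v$, and distinguished grouplike $g_H$, this is the classical Kauffman--Radford-type identity $u^2 v^{-2} = g_H$: the pivotal element $g_R := u v^{-1}$ satisfies $g_R^2 = g_H$, so that Proposition~\ref{prop:Hspherical} gives sphericality with $a = g_R \in \mathsf{SPiv}(H)$ (noting $\alpha_H = \varepsilon$ by unimodularity). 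That identity in turn follows from the ribbon axiom $v^2 = u S(u)$ combined with the relation $S(u) = g_H^{-1} u$ available in the unimodular case. The main obstacle is to extend this identity to a general finite tensor category, where the Hopf-algebraic formulas are not directly available. One route is a direct string-diagrammatic calculation using the ribbon braiding together with Shimizu's explicit description \cite{Shi2}*{Section~5} of the Radford isomorphism $\xi_D$; another is reduction to the Hopf algebraic case via a suitable reconstruction. Once the identity is in hand, rescaling $\nu_0$ by the appropriate $\lambda \in \Bbbk^\times$ produces the required $\nu$ and completes the verification of \eqref{eq:spherical}.
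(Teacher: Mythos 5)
Your reduction $j_X^{**}\,j_X = \phi_{X^{**}}\,\phi_X\,\theta_X^2$ is a correct first step and your identification of the crux---that this expression must coincide with $\xi_D(X)$ after fixing $\nu\colon \one\isomorph D$---is exactly right. But the argument has a genuine gap: you leave the core identity unproven in the categorical setting, verifying it only in the Hopf-algebraic model via $u^2v^{-2}=g_H$ and then noting that ``the main obstacle is to extend this identity to a general finite tensor category.'' That obstacle is the entire content of the proof, and merely listing two candidate strategies without executing either does not close it.

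What the paper does, and what you are missing, splits into two concrete steps. First, one should not stop at $\phi_{X^{**}}\,\phi_X\,\theta_X^2$: the ribbon axiom $\theta_{X\otimes Y}=(\theta_X\otimes\theta_Y)\,c_{Y,X}\,c_{X,Y}$, applied inside the expansion of $\phi$ from \eqref{Driniso}, is used to eliminate the twists entirely, arriving at the twist-free formula $j_X^{**}\,j_X = (\phi_{X^*}^*)^{-1}\phi_X$. Second, one invokes the specific result \cite{Shi2}*{Theorem~A.6} (cf.\ \cite{EGNO}*{Theorem~8.10.7}), which states that for a unimodular finite tensor category the Radford isomorphism satisfies $\xi_D(X)=(\phi_{X^*}^*)^{-1}\phi_X$, with $\nu=\ide$. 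Your reference to ``Shimizu's explicit description of $\xi_D$'' gestures in the right direction, but without naming this theorem and without the twist-elimination, the proof is not complete, and no reconstruction argument is needed or used. Two further points: the passage through $A_{\text{can}}^{**}$-Hopf bimodules is an unnecessary detour---one can compare the natural transformations $X\to X^{4*}$ directly; and the scalar rescaling $\nu_0\mapsto\lambda\nu_0$ cannot help, since $\nu$ appears once on each side of \eqref{eq:spherical} and the factor of $\lambda$ cancels.
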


\begin{proof}
Assume $\cC$ is a ribbon category with braiding $c$ and twist $\theta$. Then $\cC$ is a pivotal category via the pivotal structure $j$ of \eqref{eq:ribbonpivotal}.  
Consider the following computation:
\begin{align*}
\phi_{X^*}^*\; j_X^{**}\; j_X &= \phi_{X^*}^*\; j_{X^{**}}\; j_X\\
&= \phi_{X^*}^* \; \phi_{X^{**}} \; \theta_{X^{**}}\; \phi_X \; \theta_X\\
&= \phi_{X^*}^* \; \phi_{X^{**}} \;
(\ide_{X^{**}} \otimes \ev_X)(c_{X^{*}, X^{**}}(\theta_{X^*} \otimes \theta_{X^{**}})\coev_{X^*} \otimes \ide_X)
\\
&= \phi_{X^*}^* \; \phi_{X^{**}} \;
(\ide_{X^{**}} \otimes \ev_X)(c^{-1}_{X^{**}, X^{*}}(\theta_{X^* \otimes X^{**}})\coev_{X^*} \otimes \ide_X)
\\
&= \phi_{X^*}^* \; \phi_{X^{**}} \;
(\ide_{X^{**}} \otimes \ev_X)(c^{-1}_{X^{**}, X^{*}} \otimes \ide_X)
\\
&=\phi_X.
\end{align*}
 Here, the first equation uses that $j_{X^*}=j_X^*$. The second equation uses \eqref{eq:ribbonpivotal} and that $\theta_{X^*}=\theta_X^*$. The third equality uses \eqref{Driniso} and the naturality of $\theta$. The fourth equation follows from $\theta_{X\otimes Y} = (\theta_X \otimes \theta_Y) \; c_{X,Y}\; c_{Y,X} =   c_{Y,X}\; c_{X,Y} \; (\theta_X \otimes \theta_Y)$. The next equation holds by the naturality of $\theta$ and the fact that $\theta_\one=\ide_\one$. The last equality then follows a sequence of arguments using the naturality of the braiding and rigidity axioms.
 So, $j_X^{**}j_X = (\phi_{X^*}^*)^{-1} \phi_X$.

 Now assume that $\cC$ is unimodular so that $D=\one$.
Using \cite{Shi2}*{Theorem~A.6},  cf. \cite{EGNO}*{Theorem 8.10.7}, we find that 
$$\xi_D(X)=(\phi_{X^*}^*)^{-1} \phi_X.$$
Thus, using $\eta=\ide\colon \one \to D$, this shows that $j$ satisfies the conditions of Definition \ref{def:spherical}. Thus, $\cC$ is spherical. 
\end{proof}

Finally, recent results of Shimizu provide, in the non-semisimple framework, a sufficient condition for the monoidal center $\cZ(\cC)$ to be modular.

\begin{theorem}[{\cite{Shi2}*{Theorem~5.10}}]
\label{thm:Shimizucenter}
If $\cC$  is a tensor category with $\mathsf{Sqrt}_\cC(D,\xi_{D}) \neq \varnothing$, then its center $\cZ(\cC)$ is modular in the sense of Definition~\ref{def:modular}. \qed
\end{theorem}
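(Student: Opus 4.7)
My plan is to verify separately the two defining properties of a modular tensor category from Definition~\ref{def:modular}: that $\cZ(\cC)$ is ribbon and that it is non-degenerate. By Proposition~\ref{prop:ZC-finitetens}, $\cZ(\cC)$ is automatically a braided finite tensor category, so these are the only two conditions to check.

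The ribbon step is essentially a direct appeal to Theorem~\ref{thm:Shimizuribbon}: the bijection there translates any element $(V,\sigma_V)\in \mathsf{Sqrt}_\cC(D,\xi_{D})$ into an explicit pivotal structure $j$ on $\cZ(\cC)$, and then into a ribbon twist $\theta=\phi^{-1}j$ via the Drinfeld isomorphism~\eqref{Driniso}. Since the hypothesis guarantees $\mathsf{Sqrt}_\cC(D,\xi_{D})\neq \varnothing$, at least one such $(V,\sigma_V)$ exists, so $\cZ(\cC)$ carries a ribbon structure. This step is essentially formal given the machinery of Sections~\ref{sec:Ddelta}--\ref{sec:centerribbon}.

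The substantive part is non-degeneracy, which I would reduce to the known general fact that for any finite tensor category $\cC$, the Drinfeld center $\cZ(\cC)$ is factorizable, i.e., that the canonical braided tensor functor $\cZ(\cC)\boxtimes \overline{\cZ(\cC)} \to \cZ(\cZ(\cC))$ is an equivalence. By the characterization recalled after Definition~\ref{def:nondeg} (namely, that factorizability is equivalent to non-degeneracy for braided finite tensor categories), this gives $\cZ(\cC)'\simeq \Vect$. The strategy for establishing factorizability is via coends: let $L$ denote the coend of the identity functor on $\cZ(\cC)$; then $L$ acquires a canonical Hopf algebra structure together with a canonical Hopf pairing $\omega_L\colon L\otimes L\to \one$, and non-degeneracy of $\omega_L$ is equivalent to $\cZ(\cC)' \simeq \Vect$. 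For the specific case of a Drinfeld center, $L$ admits a concrete description in terms of $A_{\text{can}}$ and its duals from Section~\ref{sec:Ddelta}, and the pairing $\omega_L$ turns out to be non-degenerate because $\cC$ is an exact module category over $\cC\boxtimes \cC^{\otimes\op}$.

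Combining both steps, $\cZ(\cC)$ is simultaneously ribbon and non-degenerate, hence modular. The main obstacle in this outline is clearly the non-degeneracy step: while the ribbon structure is produced algorithmically from the hypothesis, non-degeneracy of $\cZ(\cC)$ in the non-semisimple setting requires the coend/Hopf pairing technology, and a careful proof needs to verify the non-degeneracy of $\omega_L$ and translate back to the Müger-center formulation of Definition~\ref{def:nondeg}. In particular, it is essential that no additional assumption on $\cC$ beyond finiteness is needed for the factorizability of $\cZ(\cC)$; the set $\mathsf{Sqrt}_\cC(D,\xi_{D})$ enters only through the ribbon step.
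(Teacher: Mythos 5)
Your outline is correct and follows the same route as the cited source: the paper records Theorem~\ref{thm:Shimizucenter} as a quoted result from \cite{Shi2}*{Theorem~5.10} without an independent proof, and Shimizu's argument there is exactly the two-step decomposition you describe, namely (a) a ribbon structure on $\cZ(\cC)$ produced from any element of $\mathsf{Sqrt}_\cC(D,\xi_D)$ via the bijection of Theorem~\ref{thm:Shimizuribbon}, and (b) the general fact, independent of the $\mathsf{Sqrt}$ hypothesis, that the Drinfeld center of any finite tensor category is factorizable and hence non-degenerate in the sense of Definition~\ref{def:nondeg}. Your remark that the hypothesis enters only in the ribbon step is also the correct reading of the statement.
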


By Remark~\ref{rem:sph-rib}, one obtains the following consequence generalizing M\"uger's result \cite{Mue2}*{Theorem 2}, see also \cite{TV}*{Theorem~9.11}, to the non-semisimple case.

\begin{corollary}[{\cite{Shi2}*{Theorem~5.11}}]
\label{cor:Shimizucenter}
If $\cC$ is spherical in the sense of Definition~\ref{def:spherical}, then its center $\cZ(\cC)$ is modular in the sense of Definition~\ref{def:modular}. \qed
\end{corollary}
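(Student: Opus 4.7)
The plan is to derive this corollary directly from Theorem~\ref{thm:Shimizucenter}: it suffices to show that sphericality of $\cC$ forces $\mathsf{Sqrt}_\cC(D,\xi_{D})$ to be non-empty, and then invoke that theorem. The natural candidate — already hinted at in Remark~\ref{rem:sph-rib} — is the pair $(\one,j)$ itself, where $j$ is the pivotal structure of $\cC$.

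First I would verify that $(\one,j)$ gives an object of $\cZ(\ide_{\cC},(-)^{**})$. Under the strict identifications $\one\otimes X = X = X\otimes \one$ and $X^{**}\otimes\one = X^{**}$, the pivotal isomorphism $j_X\colon X\isomorph X^{**}$ realizes a natural isomorphism $\sigma_\one(X)\colon \one\otimes X \isomorph X^{**}\otimes \one$. The tensor compatibility $j_{X\otimes Y} = j_X\otimes j_Y$ of a pivotal structure is exactly the monoidality constraint required by \cite{Shi2}*{(3.1),(3.2)}, and $(\one,j)$ is invertible in $\cZ(\ide_{\cC},(-)^{**})$ since $\one$ is invertible in $\cC$.

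Next I would match the square-root condition of Definition~\ref{def:sqrtset} to the sphericality diagram~\eqref{eq:spherical}. With $V=\one$ and $\sigma_\one=j$, the top row of \eqref{diag:Shimizu} collapses under the strict identifications to the composite $j_X^{**}\, j_X\colon X\to X^{4*}$, which is exactly the top row of \eqref{eq:spherical}. The isomorphism $\nu\colon \one\isomorph D$ furnished by sphericality simultaneously plays the role of the required isomorphism $\nu\colon V^{**}\otimes V\isomorph D$, since $V^{**}\otimes V=\one\otimes\one=\one$. Commutativity of \eqref{diag:Shimizu} is therefore equivalent to commutativity of \eqref{eq:spherical}, so $(\one,j)\in \mathsf{Sqrt}_\cC(D,\xi_{D})$. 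Applying Theorem~\ref{thm:Shimizucenter} now yields modularity of $\cZ(\cC)$.

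No genuine obstacle arises in this argument; the only care needed is the strict-identification bookkeeping that translates the pivotal data on $\cC$ into the square-root data of Definition~\ref{def:sqrtset} parameterizing ribbon structures on $\cZ(\cC)$ via Theorem~\ref{thm:Shimizuribbon}. In particular, one can read off from Theorem~\ref{thm:Shimizuribbon} that the ribbon twist on $\cZ(\cC)$ corresponding to $(\one,j)$ is precisely the one making the forgetful functor $\cZ(\cC)\to \cC$ compatible with the pivotal structure $j$, which gives a conceptually clean description of the modular structure produced by this corollary.
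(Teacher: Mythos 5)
Your proposal is correct and matches the paper's proof: the paper also cites Remark~\ref{rem:sph-rib} (which asserts $(\one,j)\in\mathsf{Sqrt}_\cC(D,\xi_D)$ for spherical $\cC$) and then invokes Theorem~\ref{thm:Shimizucenter}. Your explicit verification that diagram~\eqref{diag:Shimizu} with $V=\one$, $\sigma_\one=j$ collapses to diagram~\eqref{eq:spherical} simply spells out the content of that remark.
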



\section{Modularity of M\"uger centralizers and relative monoidal centers}\label{sec:ZBC-modular}

This section contains the main categorical results of this paper. We first use the double centralizer theorem of \cite{Shi1} to prove that M\"uger centralizers of non-degenerate topologizing subcategories in a modular category are again modular [Section~\ref{sec:muger}]. Then we recall the construction of the relative center $\cZ_\cB(\cC)$ [Section~\ref{sec:Bcenter}] and conclude, as a main application, that it produces modular categories under conditions on $\cC$ identified in \cite{Shi2} and assuming that $\cB$ is non-degenerate [Section~\ref{sec:Bcentralmodular}]. We also produce an analogue of M\"uger's decomposition theorem of modular categories in the non-semisimple setting [Section~\ref{sec:decomposition}].

\subsection{Modularity of M\"uger centralizers}
\label{sec:muger}
In the following we apply the double centralizer theorem of Shimizu \cite{Shi1}*{Theorem~4.9} (which is a non-semisimple version of \cite{Mue}*{Theorem 3.2(i)}) to obtain a generalization of the result of M\"uger \cite{Mue}*{Corollary~3.5} that centralizers of non-degenerate subcategories in modular categories are again modular in the non-semisimple setting. For this, we require the following notion of centralizer.

\smallskip

Let $\cS$ be a subset of objects of a braided category $\cC$, the \emph{M\"uger centralizer}  $\sfC_{\cC}(\cS)$ \cite{Mue}*{Definition~2.6} of $\cS$ in $\cC$ is defined as the full subcategory of $\cC$ with objects
\begin{equation} \label{eq:centralizer}
    \Ob(\sfC_{\cC}(\cS)) := \{ X \in \cC ~|~ c_{Y,X} \; c_{X,Y}=\ide_{X\otimes Y} \text{ for all } Y \in \cS\}.
\end{equation}
Note that $\sfC_{\cC}(\cS)$  is a topologizing monoidal subcategory of $\cC$ and, thus, braided. For a single object $X$ in $\cC$, we denote $\sfC_\cC(\Set{X})=\sfC_\cC(X)$. If $\cC$ is rigid, then $\sfC_\cC(\cS)$ is a rigid monoidal subcategory of $\cC$ \cite{Mue}*{Lemma 2.8}. The following result is straight-forward.

\begin{lemma}\label{lem:cent-finite}
 If $\cC$ is a (finite) tensor category and $\cS$ a subset of its objects, then $\sfC_{\cC}(\cS)$ is a (finite) tensor category. \qed
\end{lemma}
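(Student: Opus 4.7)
The plan is to verify, property by property, the defining conditions of a (finite) tensor category from Section~\ref{sec:finitetens}, leveraging the two facts already recalled just before the lemma: that $\sfC_\cC(\cS)$ is a topologizing monoidal subcategory of $\cC$, and that it is rigid whenever $\cC$ is \cite{Mue}*{Lemma~2.8}.

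First I would check that $\one \in \sfC_\cC(\cS)$, which is immediate since the unit constraints give $c_{Y,\one} c_{\one,Y} = \ide$ for all $Y \in \cC$, and moreover $\one$ remains simple in $\sfC_\cC(\cS)$ because any subobject computed in $\sfC_\cC(\cS)$ is a subobject in $\cC$ by fullness. Next, since $\sfC_\cC(\cS)$ is a full subcategory of $\cC$ that is closed under finite direct sums and subquotients, it inherits the structure of a $\Bbbk$-linear abelian category, with kernels, cokernels, and biproducts all computed in $\cC$. The restriction of $\otimes$ is still $\Bbbk$-linear in each variable. Combined with rigidity, monoidality, and simplicity of $\one$, this reduces the claim to checking local finiteness and (in the finite case) finiteness of the set of isomorphism classes of simples.

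For local finiteness: $\Hom$-spaces in $\sfC_\cC(\cS)$ coincide with those in $\cC$ and are therefore finite-dimensional. Any object $X$ of $\sfC_\cC(\cS)$ admits a finite composition series in $\cC$; since every subquotient of $X$ also lies in $\sfC_\cC(\cS)$ by the topologizing property, the composition factors are simple objects of $\cC$ belonging to $\sfC_\cC(\cS)$, and their subobjects in $\sfC_\cC(\cS)$ agree with those in $\cC$, so they remain simple in $\sfC_\cC(\cS)$. Thus $X$ has a finite filtration by simples of $\sfC_\cC(\cS)$. For the finite case, the same argument shows that every simple of $\sfC_\cC(\cS)$ is, in particular, a simple of $\cC$; hence the set of isomorphism classes of simples injects into that of $\cC$, which is finite by hypothesis.

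There is no real obstacle here: the content consists in packaging "topologizing monoidal $+$ rigid" into the definition of a (finite) tensor category. The only mildly delicate point is the identification of simples of $\sfC_\cC(\cS)$ with those simples of $\cC$ that happen to centralize $\cS$, which uses precisely fullness plus closure under subquotients.
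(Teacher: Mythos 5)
The paper does not actually give a proof of this lemma; it labels the result "straight-forward" and closes it with \verb|\qed| immediately after the statement, relying on the two facts it recalls just beforehand (that $\sfC_\cC(\cS)$ is a topologizing monoidal subcategory of $\cC$, and rigid by M\"uger's Lemma 2.8). Your proof is a correct and complete unpacking of exactly that: you verify the axioms of a (finite) tensor category from Section~\ref{sec:finitetens} using the topologizing and rigidity properties, and your key observation — that fullness plus closure under subquotients makes simples of $\sfC_\cC(\cS)$ coincide with the simples of $\cC$ that centralize $\cS$ — is precisely what makes local finiteness and finiteness of the set of simples descend. This is the natural elaboration of what the paper leaves implicit, and there is nothing to contrast.
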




The result below is the main method of this paper used to construct  modular tensor categories.

\begin{theorem}\label{thm:centmodular}
 Let $\cD$ be a modular category in the sense of Definition \ref{def:modular}, let $\cE$ be a topologizing braided tensor subcategory of $\cD$, and consider the M\"{u}ger centralizer $\sfC_{\cD}(\cE)$. Then,  $$\sfC_{\cD}(\cE)' \simeq \cE'.$$ As a consequence, $\sfC_{\cD}(\cE)$ is modular if and only if $\cE$ is modular.
\end{theorem}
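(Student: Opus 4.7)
The plan is to reduce the theorem to Shimizu's double centralizer theorem \cite{Shi1}*{Theorem~4.9}, which applies precisely because $\cD$ is non-degenerate (as it is modular) and $\cE$ is topologizing: it gives the equality $\sfC_{\cD}(\sfC_{\cD}(\cE)) = \cE$ as full subcategories of $\cD$. The first observation I would record is that for any braided tensor subcategory $\cF$ of $\cD$ (whose braiding is the restriction of that of $\cD$), the M\"uger center $\cF'$ coincides with the intersection of full subcategories $\cF \cap \sfC_{\cD}(\cF)$ inside $\cD$. Indeed, an object $X \in \cF$ lies in $\cF'$ precisely when $c_{Y,X}c_{X,Y} = \ide_{X \otimes Y}$ for every $Y \in \cF$, which is exactly the defining property of $\sfC_{\cD}(\cF)$ from \eqref{eq:centralizer}.

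Applying this identity both to $\cF = \cE$ and to $\cF = \sfC_{\cD}(\cE)$, and using the double centralizer theorem to substitute $\sfC_{\cD}(\sfC_{\cD}(\cE)) = \cE$, I would obtain
\begin{equation*}
\sfC_{\cD}(\cE)' \;=\; \sfC_{\cD}(\cE) \cap \sfC_{\cD}(\sfC_{\cD}(\cE)) \;=\; \sfC_{\cD}(\cE) \cap \cE \;=\; \cE',
\end{equation*}
which yields the equivalence $\sfC_{\cD}(\cE)' \simeq \cE'$ claimed in the theorem. To apply Shimizu's theorem to $\sfC_{\cD}(\cE)$ in this way, one needs to know that $\sfC_{\cD}(\cE)$ is itself a topologizing braided tensor subcategory of $\cD$; this follows from the remark just after \eqref{eq:centralizer} (M\"uger centralizers are automatically closed under finite direct sums and subquotients), together with Lemma~\ref{lem:cent-finite}.

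For the consequence, both $\sfC_{\cD}(\cE)$ and $\cE$ are braided finite tensor subcategories of $\cD$ (by Lemma~\ref{lem:cent-finite} and the hypothesis that $\cE$ is topologizing), and both inherit a ribbon structure from $\cD$ via Lemma~\ref{lem:ribbon-subcat}. Hence, by Definition~\ref{def:modular}, modularity of either category is equivalent to the non-degeneracy of its M\"uger center, and the displayed equivalence $\sfC_{\cD}(\cE)' \simeq \cE'$ completes the proof. The main obstacle, conceptually, is the double centralizer theorem itself, but since this has already been established by Shimizu using non-degeneracy of $\cD$, the proof should reduce to the bookkeeping above. A minor subtlety to watch is that Shimizu's result gives an equivalence (rather than a strict equality) of full subcategories, so I would phrase each centralizer identity up to natural equivalence of subcategories throughout.
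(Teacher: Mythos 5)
Your proof is correct and follows essentially the same route as the paper: the crux in both is Shimizu's double centralizer theorem \cite{Shi1}*{Theorem~4.9}, applied to the topologizing subcategory $\cE$ inside the non-degenerate $\cD$, with the modularity equivalence then following from Lemma~\ref{lem:ribbon-subcat} and Definition~\ref{def:modular}. The only difference is cosmetic: you package the two inclusions via the identity $\cF' = \cF \cap \sfC_{\cD}(\cF)$, which gives a cleaner symmetric bookkeeping, whereas the paper verifies each inclusion directly; also note that the double centralizer theorem is applied only to $\cE$ (not to $\sfC_{\cD}(\cE)$), so your aside about needing $\sfC_{\cD}(\cE)$ to be topologizing is not actually required for that step, though it is harmlessly true.
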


\begin{proof}

First, let $V$ be an object in $\cE'$. Then, for any object $W$ in $\cE$, the equation
$
    \ide_{V\otimes W}
    =c_{W,V} \;c_{V,W}
$
holds in $\cD$ using that $\cE$ is a full braided subcategory of $\cD$.
By definition, this shows that $V$ is an object in $\sfC_{\cD}(\cE)$. Now let $X$ be an object in $\sfC_{\cD}(\cE)$. Again, by definition and since $V\in \cE$, we get that $V$ centralizes $X$. Hence, $V$ is contained in the M\"uger center $\sfC_{\cD}(\cE)'$.

Conversely, let $X$ be an object in $\sfC_{\cD}(\cE)'$. Then, using $\sfC_{\cD}(\cE)\subseteq \cD$, we have that $X$ is in the M\"uger centralizer $\sfC_{\cD}(\sfC_{\cD}(\cE)).$ Using the double centralizer theorem \cite{Shi1}*{Theorem 4.9}, it follows that 
$\sfC_{\cD}(\sfC_{\cD}(\cE))$  equals $\cE$.
Note that this result uses that $\cE$ is a topologizing subcategory of $\cD$.
Hence, $X$ is an object of $\cE$. But as $X$ was assumed to be an object of $\sfC_{\cD}(\cE)$ it centralizes all objects of $\cE$ and is thus isomorphic to an object in $\cE'$.
Therefore, $\sfC_{\cD}(\cE)'\simeq  \cE'$, as desired.

Hence, $\sfC_{\cD}(\cE)$ is non-degenerate as in Definition~\ref{def:nondeg} if and only $\cE$ is. Since both $\sfC_{\cD}(\cE)$ and $\cE$ are ribbon subcategories of $\cD$ [Lemma \ref{lem:ribbon-subcat}], the consequence holds.
\end{proof}

If $\cE$ is not a topologizing subcategory that we can replace $\cE$ by its subquotient completion, which is also a braided tensor subcategory of $\cD$, in the statement of the theorem above.
Note that if $\cD$ is semisimple, then $\cE$ is a topologizing subcategory provided that it is a full subcategory closed under direct summands. (This follows as the simple objects of $\cE$ are also simple in $\cD$ and cannot have any non-trivial subquotients.)


\subsection{\texorpdfstring{$\cB$}{B}-central monoidal categories and relative monoidal centers}
\label{sec:Bcenter}
Let $\cB := (\cB, \otimes_{\cB}, \one_{\cB}, \psi)$ be an abelian braided monoidal category   throughout this section. Also, recall the braided monoidal category $\overline{\cB} := (\cB,\; \psi^{-1}_{Y,X}\colon X \otimes Y \overset{\sim}{\to} Y \otimes X)$.

\begin{definition} \label{def:Bcentral}
A monoidal category $\cC$ is {\it $\cB$-central} if there exists a faithful braided monoidal functor $G\colon \overline{\cB} \to \cZ(\cC)$.  
In this case, we refer to the functor $G$ as {\it $\cB$-central} as well.

Likewise, if $\cB$ is a braided (finite) tensor category, then we say that a (finite) tensor category $\cC$ is {\it $\cB$-central} if there exists a faithful braided tensor functor $G \colon\overline{\cB} \to \cZ(\cC)$. 
\end{definition}

\begin{remark}  \label{rem:Bcentral}
We compare our notion of a $\cB$-central functor with similar notions in the literature.
\begin{enumerate}
\item
Denote by $F\colon \cZ(\cC)\to \cC$ the forgetful functor.
We have that for a $\cB$-central functor $G$, the functor $T:=F\circ G \colon  \overline{\cB} \to \cC$ is central in the sense of \cite{DNO}*{Definition~2.3} and, in addition, faithful.
Later in \cite{DNO} only central functors such that $T$ is fully faithful are considered. While faithfulness of $G$ is equivalent to faithfulness of $T$ we do not require that $T$ is full.

\smallskip

\item Recall from \cite{L18}*{Section~3.3} that a monoidal category $\cC$ is {\it $\cB$-augmented} if it comes equipped with monoidal functors  
$F'\colon \cC \to \cB$ and $T'\colon \cB \to \cC$ 
and natural isomorphisms
$\tau\colon F'T' \isomorph \ide_\cB$ and $\sigma\colon \otimes_\cC (\ide_\cC \boxtimes T') \isomorph \otimes_\cC^{\oop} (\ide_\cC \boxtimes T')$
such that $\sigma$ descends to $\psi$ under $F'$, $\tau$ and $\sigma$ are coherent with the structure of $\cC$ and $\cB$. So a $\cB$-augmented monoidal category $\cC$ is $\cB$-central. 
In fact, we may define a functor of braided monoidal categories $G\colon \overline{\cB}\to \cZ(\cC)$, by  $B\mapsto (T'(B),\sigma^{-1})$ and by $T'$ on morphism spaces. Since $T'$ has a right inverse, it is faithful, and thus $G$ is faithful.

\smallskip

\item Note that, in the semisimple case, if $\cB$ is non-degenerate, then $G$ is fully faithful; see \cite{DMNO}*{Corollary~3.26}.
\end{enumerate}
\end{remark}

\begin{example}\label{exmpl:br-Hopf}
Given $H$ a Hopf algebra in a braided monoidal category (respectively, braided (finite) tensor category) $\cB$, we have that $\cC=\lmod{H}(\cB)$ is a $\cB$-central monoidal category (respectively, $\cB$-central (finite) tensor category) \cite{L18}*{Example 3.17}. Define a braided monoidal functor $G\colon \ov{\cB}\to \cZ(\cC)$ by sending $V\in \ov{\cB}$ to $((V,a_V^{\triv}),\psi^{-1}_{-,V})$,  where 
$a_V^\triv:=\varepsilon\otimes \ide_V\colon H \otimes V \to V$
is the trivial $H$-action on $V$ and $\psi$ is the braiding of $\cB$.
The two conditions of the action being trivial, and the half-braiding equaling $\psi^{-1}$, are stable under taking subquotients in $\cZ(\cC)$. Hence, the image of $\ov{\cB}$ in $\cZ(\cC)$ is a topologizing subcategory. 
\end{example}

\begin{definition} \label{def:relcenter}
Given a $\cB$-central monoidal category $\cC$, we define the \emph{relative monoidal center} $\cZ_\cB(\cC)$ to be the braided monoidal full subcategory consisting of objects $(V,c)$ of $\cZ(\cC)$, where $V$ is an object of $\cC$, and the half-braiding $c:=c_{V,-}\colon  V \otimes \ide_{\cC} \isomorph \ide_{\cC} \otimes V$ is a natural isomorphism satisfying the two conditions below:
\begin{enumerate}
\item[(i)] [tensor product compatibility] \; $c_{V,X \otimes Y} = (\ide_X \otimes c_{V,Y})(c_{V,X} \otimes \ide_Y)$, for $X,Y \in \cC$.
\smallskip
\item[(ii)] [compatibility with $\cB$-central structure] \; $c_{G(B),V} \circ c_{V,G(B)} = \ide_{V \otimes G(B)}$, for any $B \in \overline{\cB}$.
\end{enumerate}
That is, $\cZ_\cB(\cC)$ is the full subcategory of $\cZ(\cC)$ of all objects that centralize $G(B)$ for any object $B$ of $\ov{\cB}$.
\end{definition}

\begin{remark} \label{rem:ZBC}
 It is clear, by definition, that $\cZ_\cB(\cC)$ is equal to the M\"{u}ger centralizer ${\sf C}_{\cZ(\cC)}(G(\overline{\cB}))$. Hence, $\cZ_\cB(\cC)$ is a topologizing braided monoidal subcategory of $\cZ(\cC)$. 
\end{remark}

\begin{proposition} \label{prop:ZBCfinite-tens}
Take $\cB$ a braided (finite) tensor category, and $\cC$ a $\cB$-central (finite) tensor category (with $\cB$-central functor $G\colon  \overline{\cB} \to \cZ(\cC)$). Then, $\cZ_\cB(\cC)$ is a braided (finite) tensor category. 
\end{proposition}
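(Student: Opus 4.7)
The plan is to assemble the result from pieces already established in the excerpt, using Remark~\ref{rem:ZBC} as the bridge. By Proposition~\ref{prop:ZC-finitetens}, $\cZ(\cC)$ is a braided (finite) tensor category whenever $\cC$ is. By Remark~\ref{rem:ZBC}, the relative center $\cZ_\cB(\cC)$ coincides with the M\"uger centralizer $\sfC_{\cZ(\cC)}(G(\overline{\cB}))$ of the full image of $G$ inside $\cZ(\cC)$. Thus the statement reduces to verifying that such a M\"uger centralizer inherits the structure of a braided (finite) tensor category.

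First I would invoke Lemma~\ref{lem:cent-finite}, applied to the ambient (finite) tensor category $\cZ(\cC)$ and the subset of objects $G(\Ob(\overline{\cB})) \subseteq \Ob(\cZ(\cC))$, to conclude that $\sfC_{\cZ(\cC)}(G(\overline{\cB}))$ is itself a (finite) tensor category. This gives the $\Bbbk$-linear abelian structure, local finiteness, the tensor product and unit (the unit $\one_{\cZ(\cC)}$ centralizes everything), rigidity (by the remark following \eqref{eq:centralizer} from \cite{Mue}*{Lemma~2.8}), and exactness of $\otimes$. In the finite case, since $G(\overline{\cB})$ is a topologizing subcategory of $\cZ(\cC)$ (as $\overline{\cB}$ is a tensor category and $G$ is exact, and by the argument in Example~\ref{exmpl:br-Hopf}), the full subcategory cut out by the centralizer condition is closed under subquotients, hence remains finite.

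Next I would note that the braiding is automatic: the M\"uger centralizer of any set of objects in a braided tensor category is a topologizing \emph{braided} monoidal subcategory, since the centralizer condition $c_{Y,X}\,c_{X,Y} = \ide_{X\otimes Y}$ is stable under tensor product in the centralizing variable. Explicitly, for $X, X' \in \sfC_{\cZ(\cC)}(G(\overline{\cB}))$ and $Y \in G(\overline{\cB})$, the hexagon identity for the braiding $c$ of $\cZ(\cC)$ implies $c_{Y, X\otimes X'}\,c_{X\otimes X', Y} = \ide$, so $X \otimes X'$ again lies in the centralizer; and the unit trivially does. Hence the restriction of $c$ endows $\cZ_\cB(\cC)$ with a braiding compatible with its monoidal structure.

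There is no serious obstacle here: this proposition is the expected packaging of Remark~\ref{rem:ZBC} together with Lemma~\ref{lem:cent-finite} and Proposition~\ref{prop:ZC-finitetens}. The only point requiring mild care is the finiteness claim in the ``finite'' case, where one must confirm that intersecting $\cZ(\cC)$ with the half-braiding condition of Definition~\ref{def:relcenter}(ii) produces a topologizing subcategory of a finite category and is therefore itself finite; this follows because the centralizer condition is preserved under passage to subobjects, quotients, and finite direct sums in $\cZ(\cC)$.
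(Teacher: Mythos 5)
Your proof is correct and follows essentially the same route as the paper: identify $\cZ_\cB(\cC)$ with the M\"uger centralizer $\sfC_{\cZ(\cC)}(G(\overline{\cB}))$ via Remark~\ref{rem:ZBC}, apply Proposition~\ref{prop:ZC-finitetens} to see that $\cZ(\cC)$ is a braided (finite) tensor category, and apply Lemma~\ref{lem:cent-finite} to conclude that the centralizer is again a (finite) tensor category with the braiding inherited by restriction.

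One small caveat: your extra justification in the finite case is both unnecessary and slightly misreasoned. You assert that $G(\overline{\cB})$ is a topologizing subcategory of $\cZ(\cC)$ because $G$ is exact; but exactness of a faithful tensor functor does not in general make its full image closed under subquotients in the target category (this is precisely why topologizing-ness of $G(\overline{\cB})$ appears as an explicit hypothesis, not an automatic consequence, in Theorem~\ref{thm:ZBCmodular}). Fortunately you do not need it: Lemma~\ref{lem:cent-finite} applies to the centralizer of an arbitrary \emph{set} of objects and already yields finiteness, and the centralizer is closed under subquotients simply because the condition $c_{Y,X}\,c_{X,Y}=\ide_{X\otimes Y}$ is preserved under passing to subobjects and quotients of $X$ (by naturality of the braiding), independently of any property of the centralized set.
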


\begin{proof}
First, recall that $\cZ(\cC)$ is a braided finite tensor category by Proposition~\ref{prop:ZC-finitetens}. Next, Remark~\ref{rem:ZBC} implies that the braiding in $\cZ_\cB(\cC)$ is the restriction of the braiding in $\cZ(\cC)$. Finally, with Lemma~\ref{lem:cent-finite}, the full braided subcategory $\cZ_\cB(\cC)=\mathsf{C}_{\cZ(\cC)}(G(\ov{\cB}))$ of $\cZ(\cC)$ is a tensor category, and it is finite provided that $\cC$ is finite.
\end{proof}

\begin{remark}
An important invariant of a finite tensor category $\cC$ is the \emph{Frobenius--Perron dimension} $\FPdim(\cC)$ defined in \cite{EGNO}*{Definition 6.1.7}. It follows directly from  \cite{Shi1}*{Theorem~4.9} and \cite{EGNO}*{Theorem 7.16.6}
for $\cC$ a $\cB$-central finite tensor category, the Frobenius--Perron dimension of the relative center is given by
$$\FPdim \big(\cZ_\cB(\cC)\big)= \frac{\FPdim (\cC)^2}{\FPdim(\cB)}.$$
\end{remark}

\begin{example}\label{expl:trivial}
For any braided monoidal category $\cB$ consider the functor $\ov{\cB}\to \cZ(\cB)$ of braided monoidal categories given by sending 
$X$ to $(X,\psi^{-1}_{-,X})$. This functor makes $\cB$ a $\cB$-central monoidal category such that $\cZ_\cB(\cB)\simeq \cB$ as braided monoidal categories.
\end{example}

\begin{example}\label{expl:br-YD}
In the setup of Example \ref{exmpl:br-Hopf}, where $\cB$ is rigid and $\cC = \lmod{H}(\cB)$ for $H \in \HopfAlg(\cB)$, we have an equivalence of braided monoidal categories 
$$\cZ_\cB(\cC)\simeq \lYD{H}(\cB),$$
see \cite{L18}*{Proposition 3.36}.  If $\cB$ is a (finite) tensor category here, then so is $\cZ_\cB(\cC)$.
\end{example}

\begin{example}\label{expl:br-Drin}
Continuing the example above, let $\cB=\lmod{K}$ for a finite-dimensional quasi-triangular Hopf algebra $K$, and $H$ a finite-dimensional Hopf algebra in $\cB$ with dual $H^*$ (as in dually paired Hopf algebras \cite{L17}*{Definition 3.1}), and $$\cC=\lmod{H}(\lmod{K})\simeq \lmod{H\rtimes K}.$$ Then there is an equivalence of tensor categories 
$$\cZ_\cB(\cC)\simeq \lYD{H}(\cB)\simeq \lmod{\Drin_K(H,H^*)}.$$
Here, $\Drin_K(H,H^*)$ is a quasi-triangular Hopf algebra called the \emph{braided Drinfeld double} of $H$. It is due to \cite{Maj99} where it is referred to as the \emph{double bosonization}. For details, including a presentation of $\Drin_K(H,H^*)$, see \cite{L17}*{Section 3.2}. See Lemma \ref{NicholsBD-pres} for a presentation in the case that $H$ is a Nichols algebra of diagonal type. The case when $K = \Bbbk$ is discussed at the end of Section~\ref{sec:Hmod}.
\end{example}

\subsection{Main application: Modularity of \texorpdfstring{$\cZ_\cB(\cC)$}{ZB(C)}}
\label{sec:Bcentralmodular}
In this part, we provide the main application of Theorem \ref{thm:centmodular} to establish when a  relative monoidal center is modular. This result provides sufficient conditions that $\cZ_\cB(\cC)$ is a modular category.

\begin{theorem}  \label{thm:ZBCmodular}
 Let $\cB$ be a non-degenerate braided finite tensor category, and $\cC$ a $\cB$-central  
 finite tensor category so that the set   $\mathsf{Sqrt}_\cC(D,\xi_{D})$ from Definition \ref{def:sqrtset} is non-empty. Assume that the full image $G(\ov{\cB})$ in $\cZ(\cC)$ is a topologizing subcategory. Then the relative monoidal center $\cZ_\cB(\cC)$ is a modular tensor category.
 \end{theorem}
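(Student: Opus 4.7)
The plan is to realize $\cZ_\cB(\cC)$ as a M\"uger centralizer inside the monoidal center $\cZ(\cC)$ and then apply Theorem~\ref{thm:centmodular}. By Remark~\ref{rem:ZBC}, we have $\cZ_\cB(\cC)=\sfC_{\cZ(\cC)}(G(\ov{\cB}))$. Since $\mathsf{Sqrt}_\cC(D,\xi_D)\neq\varnothing$, Theorem~\ref{thm:Shimizucenter} tells us that $\cD:=\cZ(\cC)$ is modular, so in particular ribbon. The full image $\cE:=G(\ov{\cB})$ is a topologizing braided tensor subcategory of $\cZ(\cC)$: topologizing by hypothesis, and a braided tensor subcategory because $G$ is a braided tensor functor (preserving $\otimes$, duals, and the unit). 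Thus the setup of Theorem~\ref{thm:centmodular} is in place, and its conclusion yields
\[
\cZ_\cB(\cC)' \;\simeq\; G(\ov{\cB})'.
\]

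The next step is to assemble the remaining ingredients of modularity for $\cZ_\cB(\cC)$. It is already a braided finite tensor category by Proposition~\ref{prop:ZBCfinite-tens}, and it inherits a ribbon structure from $\cZ(\cC)$ by Lemma~\ref{lem:ribbon-subcat}. In view of Definition~\ref{def:modular}, it only remains to establish that $G(\ov{\cB})' \simeq \Vect$.

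The hard part will be this final non-degeneracy transfer: passing non-degeneracy of $\cB$ through the faithful braided monoidal functor $G$ to its full image. My approach is to take $Y\in G(\ov{\cB})'$ and fix $Z\in\ov{\cB}$ with $Y\cong G(Z)$. The double-braiding identity $c^{\cZ(\cC)}_{G(X),Y}\, c^{\cZ(\cC)}_{Y,G(X)}=\ide_{Y\otimes G(X)}$ for every $X\in\ov{\cB}$, unpacked via the braided-tensor-functor axioms of $G$ together with its coherence isomorphisms $G_{X,Z}$ and $G_{Z,X}$, will collapse to $G(\psi_{Z,X}\,\psi_{X,Z})=G(\ide_{X\otimes Z})$. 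Faithfulness of $G$ then forces $\psi_{Z,X}\,\psi_{X,Z}=\ide$ for all $X$, which exhibits $Z$ as an object of the M\"uger center of $\cB$. Non-degeneracy of $\cB$ will then yield $Z\cong\one$, and hence $Y\cong\one$ in $G(\ov{\cB})$, completing the argument.

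The subtlety here is that in the semisimple case one typically invokes automatic full faithfulness of $G$ (cf.\ Remark~\ref{rem:Bcentral}(3)) to identify $G(\ov{\cB})$ with $\ov{\cB}$ as braided tensor categories, which would trivialize the transfer of non-degeneracy; in the non-semisimple setting this tool is not available, so the argument must proceed via faithfulness alone with careful bookkeeping of the monoidal coherence isomorphisms of $G$. It is precisely because M\"uger-center conditions are phrased as equations between morphisms (rather than existence of isomorphisms between objects) that faithfulness will suffice in the final step.
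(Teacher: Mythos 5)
Your proof is correct and follows essentially the same route as the paper: establish modularity of $\cZ(\cC)$ via Theorem~\ref{thm:Shimizucenter}, identify $\cZ_\cB(\cC)$ with the M\"uger centralizer $\sfC_{\cZ(\cC)}(G(\ov{\cB}))$, and apply Theorem~\ref{thm:centmodular}. The only difference is that the paper asserts non-degeneracy of $G(\ov{\cB})$ in a single sentence from faithfulness of $G$, whereas you spell out the unwinding of the double braiding through the coherence isomorphisms $G_{X,Z}$ and the reduction to the M\"uger center of $\ov{\cB}$ --- which is exactly the calculation the paper leaves implicit.
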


 Here, the braiding of $\cZ_\cB(\cC)$ is restricted from that of $\cZ(\cC)$, see  Proposition~\ref{prop:ZBCfinite-tens}, and the ribbon structure is restricted to $\cZ_\cB(\cC)$ from that of $\cZ(\cC)$ by Lemma~\ref{lem:ribbon-subcat}.

\begin{proof}[Proof of Theorem \ref{thm:ZBCmodular}]
We have that  $\cZ(\cC)$ is modular by Theorem~\ref{thm:Shimizucenter}. By assumption, the full image $G(\ov{\cB})$ is a topologizing subcategory of $\cZ(\cC)$, and since  $G\colon \ov{\cB} \to \cZ(\cC)$ is faithful, $G(\ov{\cB})$ is non-degenerate. Now, apply Theorem~\ref{thm:centmodular} with $\cD = \cZ(\cC)$ and  $\cE = G(\ov{\cB})$, together with Remark~\ref{rem:ZBC}, to conclude that $\cZ_\cB(\cC)$ is modular, as desired.
\end{proof}

\begin{remark}
The statement of Theorem \ref{thm:ZBCmodular} can be varied to requiring that the subquotient completion of the full image of $G$ (with braiding obtained from being a tensor subcategory of $\cZ(\cC)$) is non-degenerate instead of requiring that $\cB$ is non-degenerate  and $G(\ov{\cB})$ topologizing. 
\end{remark}

In the setting of Example~\ref{expl:br-YD}, Shimizu achieved Theorem~\ref{thm:ZBCmodular} for $\cZ_{\cB}(\cC)\simeq \lYD{H}(\cB)$ in
\cite{Shi3}*{Theorem~4.2} using \cite{Shi1}*{Theorem~6.2}. 
Next, by Theorem~\ref{thm:Shimizucenter} we obtain the following result as a special case.

\begin{corollary}  \label{cor:ZBCmodular}
 Let $\cB$ be a non-degenerate braided finite tensor category, and $\cC$ a $\cB$-central 
 finite tensor category which is spherical in the sense of Definition \ref{def:spherical} and such that the full image $G(\ov{\cB})$ in $\cZ(\cC)$ is a topologizing subcategory. Then the relative monoidal center $\cZ_\cB(\cC)$ is modular.
 \end{corollary}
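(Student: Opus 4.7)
The plan is to derive this corollary as an immediate specialization of Theorem \ref{thm:ZBCmodular}. Comparing hypotheses, the only difference is that ``$\mathsf{Sqrt}_\cC(D,\xi_{D})$ is non-empty'' has been replaced by ``$\cC$ is spherical in the sense of Definition \ref{def:spherical}''. To bridge these, I would invoke Remark \ref{rem:sph-rib}, which records that sphericality in the sense of \cite{DSS} produces a canonical element $(\one, j) \in \mathsf{Sqrt}_\cC(D,\xi_{D})$, where $j$ is the pivotal structure underlying the sphericality; in particular this set is non-empty. The remaining assumptions (non-degeneracy of $\cB$, $\cB$-centrality of $\cC$, and the topologizing condition on $G(\ov{\cB})$) are identical to those of Theorem \ref{thm:ZBCmodular}, so that theorem applies and gives that $\cZ_\cB(\cC)$ is modular.

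A logically equivalent, more self-contained, route would be to follow the structure of the proof of Theorem \ref{thm:ZBCmodular} directly, substituting Corollary \ref{cor:Shimizucenter} in place of Theorem \ref{thm:Shimizucenter} at the very first step. Namely, under the sphericality hypothesis on $\cC$, Corollary \ref{cor:Shimizucenter} yields that $\cZ(\cC)$ is modular. Since $G \colon \ov{\cB} \to \cZ(\cC)$ is faithful and $\cB$ is non-degenerate, the full image $G(\ov{\cB})$ is a non-degenerate braided tensor subcategory of $\cZ(\cC)$ (hence modular by Lemma \ref{lem:ribbon-subcat}), and by assumption it is topologizing. By Remark \ref{rem:ZBC}, $\cZ_\cB(\cC) = \mathsf{C}_{\cZ(\cC)}(G(\ov{\cB}))$, so Theorem \ref{thm:centmodular} applied with $\cD = \cZ(\cC)$ and $\cE = G(\ov{\cB})$ furnishes the modularity of $\cZ_\cB(\cC)$.

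There is no real obstacle here: the statement is essentially a translation of the parent theorem's hypothesis into the more familiar and geometric notion of sphericality from \cite{DSS}, via the observation already recorded in Remark \ref{rem:sph-rib}. All the substantive work has been carried out upstream, in Theorems \ref{thm:Shimizucenter}, \ref{thm:centmodular}, and \ref{thm:ZBCmodular}.
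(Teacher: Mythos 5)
Your first route is precisely the paper's own argument: the paper's proof reads "This follows directly from Theorem~\ref{thm:ZBCmodular} using Remark~\ref{rem:sph-rib}," which is your observation that sphericality gives $(\one, j) \in \mathsf{Sqrt}_\cC(D,\xi_{D})$. Your second, unrolled route is a logically equivalent expansion of the same argument, so the proposal matches the paper.
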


\begin{proof} 
This follows directly from Theorem \ref{thm:ZBCmodular} using Remark~\ref{rem:sph-rib}. 
\end{proof}

Observe that this result is a relative generalization of Corollary~\ref{cor:Shimizucenter} above due to Shimizu.

\subsection{M\"{u}ger's decomposition theorem in the non-semisimple setting} \label{sec:decomposition}

In this section, we establish a generalization of M\"{u}ger's decomposition theorem of modular fusion categories \cite{Mue}*{Theorem~4.5} to the non-semisimple setting [Corollary~\ref{cor:primedecomp}].
We begin with a non-semisimple generalization of \cite{Mue}*{Theorem~4.2}.

\begin{theorem}\label{thm:decomp}
Let $\cD$ be a modular tensor category, with a topologizing non-degenerate braided tensor subcategory $\cE$. Then there is an equivalence of ribbon categories:
$$\cD \simeq \cE \boxtimes {\mathsf C}_\cD(\cE).$$
\end{theorem}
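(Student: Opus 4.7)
The plan is to construct an explicit braided ribbon tensor functor
$$F \colon \cE \boxtimes \mathsf{C}_\cD(\cE) \longrightarrow \cD, \qquad X \boxtimes Y \longmapsto X \otimes Y,$$
and to show it is an equivalence. First I would verify that the $\Bbbk$-bilinear bifunctor $(X,Y) \mapsto X \otimes Y$ from $\cE \times \mathsf{C}_\cD(\cE)$ to $\cD$ is right exact in both variables (exactness of $\otimes$ follows from rigidity of $\cD$), so by the universal property of the Deligne tensor product it induces a $\Bbbk$-linear exact functor $F$. The monoidal coherence isomorphism $F(X\boxtimes Y)\otimes F(X'\boxtimes Y') \isomorph F((X\boxtimes Y)\otimes (X'\boxtimes Y'))$ is taken to be $\ide_X \otimes c^{-1}_{X',Y}\otimes \ide_{Y'}$, which is well-defined since $Y \in \mathsf{C}_\cD(\cE)$ centralizes $X' \in \cE$.

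Next I would check that $F$ is braided ribbon. Comparing the braiding \eqref{eq:Deligne-braiding} of the Deligne product with the braiding of $\cD$ reduces, after conjugation by the monoidal coherence, to the identity $c^\cD_{X\otimes Y,X'\otimes Y'} = c^\cD_{X,X'}\otimes c^\cD_{Y,Y'}$, which follows from the hexagon axioms together with the vanishing $c_{Y,X'}c_{X',Y}=\ide$ supplied by $Y\in \mathsf{C}_\cD(\cE)$. For ribbonality, by Remark \ref{rem:Deligne-modular} and Lemma \ref{lem:ribbon-subcat} it suffices to show $\theta^\cD_{X\otimes Y} = \theta^\cD_X \otimes \theta^\cD_Y$ for $X\in \cE$, $Y\in \mathsf{C}_\cD(\cE)$; this is immediate from $\theta_{X\otimes Y}=(\theta_X\otimes \theta_Y)c_{Y,X}c_{X,Y}$ and the centralizing identity $c_{Y,X}c_{X,Y}=\ide$.

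Before tackling equivalence I would collect the consequences of Theorem~\ref{thm:centmodular}. Since $\cE$ is non-degenerate, $\mathsf{C}_\cD(\cE)' \simeq \cE' \simeq \Vect$, so $\mathsf{C}_\cD(\cE)$ is itself modular (by Remark~\ref{rem:modular-subcat}); moreover any object of $\cE\cap \mathsf{C}_\cD(\cE)$ centralizes all of $\cE$ inside $\cD$ and hence lies in $\cE'$, giving $\cE\cap \mathsf{C}_\cD(\cE)\simeq \Vect$. Using the Frobenius--Perron dimension formula for centralizers in a non-degenerate braided finite tensor category (the non-semisimple analogue of \cite{Mue}*{Theorem~2.5} following from the double centralizer theorem \cite{Shi1}*{Theorem~4.9}), we obtain
$$\FPdim(\cE)\cdot \FPdim(\mathsf{C}_\cD(\cE)) \;=\; \FPdim(\cD)\cdot \FPdim(\cE\cap \mathsf{C}_\cD(\cE)) \;=\; \FPdim(\cD),$$
so $\FPdim(\cE\boxtimes \mathsf{C}_\cD(\cE))=\FPdim(\cD)$.

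The main obstacle is to show that $F$ is fully faithful; once this is in hand, the standard criterion that an exact tensor functor between finite tensor categories of equal Frobenius--Perron dimension which is fully faithful is an equivalence (cf.~\cite{EGNO}*{Section~6.3}) concludes the proof. Fully faithfulness I would establish by using rigidity to reduce morphism spaces to the form $\Hom_\cD(\one, X\otimes Y)$ with $X\in \cE$, $Y\in \mathsf{C}_\cD(\cE)$, and then showing that such Hom spaces split as $\Hom_\cE(\one,X)\otimes \Hom_{\mathsf{C}_\cD(\cE)}(\one,Y)$; the crucial inputs are $\cE\cap \mathsf{C}_\cD(\cE)\simeq \Vect$ and the factorizability equivalence $\cE\boxtimes \ov{\cE}\simeq \cZ(\cE)$ obtained from non-degeneracy of $\cE$, applied through a coend computation in the spirit of Müger's original semisimple argument. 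Combining fully faithfulness with the dimension match above yields that $F$ is an equivalence of braided categories, and the compatibility with twists established earlier upgrades this to an equivalence of ribbon categories.
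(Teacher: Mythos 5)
Your construction of the functor $F$, the verification that it is a braided ribbon tensor functor, and the Frobenius--Perron dimension count $\FPdim(\cE\boxtimes \mathsf{C}_\cD(\cE)) = \FPdim(\cD)$ are all sound and close in spirit to what the paper does. However, there is a genuine gap at the step you yourself flag as ``the main obstacle'': you have not actually proved that $F$ is fully faithful, and the sketch you give is unlikely to close on its own. The claimed isomorphism $\Hom_\cD(\one, X\otimes Y)\cong \Hom_\cE(\one,X)\otimes_\Bbbk\Hom_{\mathsf{C}_\cD(\cE)}(\one,Y)$ is essentially a reformulation of the theorem itself; in the non-semisimple setting it does not follow from $\cE\cap \mathsf{C}_\cD(\cE)\simeq\Vect$ by decomposing into simples (the morphism $\one\to X\otimes Y$ need not factor through a direct sum of copies of $\one$ in any useful way once semisimplicity is dropped), and the invocation of ``a coend computation in the spirit of M\"uger's original semisimple argument'' is left entirely to the reader.

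The paper avoids this difficulty by choosing the other arm of the equivalence criterion: rather than full faithfulness plus an FPdim match, it establishes \emph{surjectivity} plus an FPdim match. Concretely, it introduces $\cE\vee\mathsf{C}_\cD(\cE)$ (the subquotient closure of the image of $T=F$), so that $T\colon \cE\boxtimes\mathsf{C}_\cD(\cE)\to \cE\vee\mathsf{C}_\cD(\cE)$ is surjective in the sense of \cite{EGNO} \emph{by construction}; it then uses \cite{Shi1}*{Lemma~4.8} (the identity $\FPdim(\cD_1\vee\cD_2)\,\FPdim(\cD_1\cap\cD_2)=\FPdim(\cD_1)\,\FPdim(\cD_2)$) together with $\cE\cap\mathsf{C}_\cD(\cE)\simeq\Vect$ to get the FPdim match, and concludes via \cite{EGNO}*{Proposition~6.3.4} that $T$ is an equivalence onto $\cE\vee\mathsf{C}_\cD(\cE)$. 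Full faithfulness is never proved directly; it comes out at the end for free. The remaining step, $\cE\vee\mathsf{C}_\cD(\cE)=\cD$, is a short consequence of the double centralizer theorem \cite{Shi1}*{Theorem~4.9}, which you cite only to derive the FPdim formula. If you want to save your proposal, replace the full-faithfulness step by: (i) show every object of $\cD$ is a subquotient of some $X\otimes Y$ with $X\in\cE$, $Y\in\mathsf{C}_\cD(\cE)$, via the double centralizer computation $\cE\vee\mathsf{C}_\cD(\cE)=\mathsf{C}_\cD(\mathsf{C}_\cD(\cE)\cap\cE)=\mathsf{C}_\cD(\Vect)=\cD$, and then (ii) invoke \cite{EGNO}*{Proposition~6.3.4} instead of the fully-faithful criterion.
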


Here, $\cE$ is modular [Remark~\ref{rem:modular-subcat}]; the results in \cite{Mue} have $\cE$ being modular as a hypothesis. 

\begin{proof}[Proof of Theorem~\ref{thm:decomp}]
As in the discussion before \cite{Shi1}*{Lemma~4.8},  let $\cD_1$ and $\cD_2$ be topologizing subcategories of $\cD$, and let $T\colon  \cD_1 \boxtimes \cD_2 \to \cD$ be the functor induced by $\otimes^{\cD}$. Set $\cD_1 \vee \cD_2$ to be the closure under subquotients of the image of $T$. We then get that $\cD_1 \vee \cD_2$ and $\cD_1 \cap \cD_2$ are topologizing full subcategories of $\cD$. Applying this to  $\cD_1 = \cE$ and $\cD_2 = {\mathsf C}_\cD(\cE)$ we see that
$$\cE \cap {\mathsf C}_\cD(\cE) = \{V \in \cE ~|~ c_{W,V} \; c_{V,W} = \ide_{V \otimes W} \; \forall W \in \cE\} = \cE' \simeq \Vect.$$ 

Next, $$\cE \boxtimes {\mathsf C}_\cD(\cE) \simeq \cE \vee {\mathsf C}_\cD(\cE)$$ as ribbon tensor categories via the functor $T$ above. Indeed we see  by construction that  $T$ is essentially surjective. Applying \cite{Shi1}*{Lemma~4.8} with $\cE \cap {\mathsf C}_\cD(\cE) \simeq \Vect$ we get that \begin{center}
    $\FPdim(\cE \vee {\mathsf C}_\cD(\cE))$ = $\FPdim(\cE)$\;$\FPdim({\mathsf C}_\cD(\cE))$ = 
    $\FPdim(\cE \boxtimes {\mathsf C}_\cD(\cE))$.
\end{center} 
So by \cite{EGNO}*{Proposition~6.3.4}, $T$ is an equivalence  of categories. Moreover, $\cE$ and ${\mathsf C}_\cD(\cE)$ centralize each other by definition. So, as in the proof of \cite{Mue2}*{Proposition~7.7}, $T$ is a functor of braided tensor categories such that $T(\theta_V\boxtimes \theta_W)=\theta_{V\otimes W}$, for all $V\in \cE$, $W\in {\mathsf C}_\cD(\cE)$. Thus, $T$ is an equivalence of ribbon tensor categories as the braiding and twist are preserved.

Finally, we get the desired result by the  computation below, which follows from the double centralizer theorem \cite{Shi1}*{Theorem~4.9}: 
\begin{align*}
\cE \vee {\mathsf C}_\cD(\cE) 
= {\mathsf C}_\cD[{\mathsf C}_\cD(\cE \vee {\mathsf C}_\cD(\cE))] 
&= {\mathsf C}_\cD[{\mathsf C}_\cD(\cE) \cap{\mathsf C}_\cD({\mathsf C}_\cD(\cE))]\\
&= {\mathsf C}_\cD[{\mathsf C}_\cD(\cE) \cap \cE]
\simeq {\mathsf C}_\cD(\Vect) = \cD.
\qedhere
\end{align*}
\end{proof}

\begin{example} \label{ex:decomp1}
Let $\cB$ be a non-degenerate braided finite tensor category, and let $\cC$ be a $\cB$-central finite tensor category, with $\cB$-central functor $G\colon  \ov{\cB} \to \cZ(\cC)$. Assume that $G(\ov{\cB})$ is a topologizing subcategory of $\cZ(\cC)$ and that $\mathsf{Sqrt}_\cC(D,\xi_{D}) \neq \varnothing$.
\begin{enumerate}
    \item Then, by the Theorems~\ref{thm:ZBCmodular} and~\ref{thm:decomp}, we have a decomposition of modular tensor categories:  $$\cZ(\cC)\; \simeq \; \ov{\cB} \boxtimes \cZ_\cB(\cC).$$
    \item If, further, $\cB = \lmod{K}$ and $\cC = \lmod{H}(\lmod{K})$, for a quasi-triangular Hopf algebra $K$, and for a finite-dimensional Hopf algebra $H$ in $\cB$, then by Example~\ref{expl:br-Drin}, we have a decomposition of modular tensor categories:  $$\lmod{\Drin(H \rtimes K)}\; \simeq \; \lmod{K} \;\boxtimes \;\lmod{\Drin_K(H,H^*)}.$$
\end{enumerate}
\end{example}

In comparison with \cite{Mue}*{Definition~4.4}, consider the following terminology.

\begin{definition} \label{def:prime} A modular tensor category $\cC$ is {\it prime} if every topologizing non-degenerate braided tensor subcategory is equivalent to either $\cC$ or $\Vect$.
\end{definition}

As a consequence of Theorem~\ref{thm:decomp}, we immediately obtain the result below; cf., \cite{Mue}*{Theorem~4.5}.

\begin{corollary} \label{cor:primedecomp}
Every modular tensor category is equivalent to a finite Deligne tensor product of prime modular categories. \qed
\end{corollary}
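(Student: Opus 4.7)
The plan is to proceed by strong induction on the Frobenius--Perron dimension $\FPdim(\cD)$, exactly mirroring the semisimple argument of \cite{Mue}*{Theorem~4.5} but now using Theorem~\ref{thm:decomp} as the induction step. The base case is when $\cD$ is itself prime, where nothing needs to be proved (treating $\cD \simeq \Vect$ as the empty Deligne tensor product of prime modular categories).

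For the inductive step, suppose $\cD$ is not prime. By Definition~\ref{def:prime}, there exists a topologizing non-degenerate braided tensor subcategory $\cE \subseteq \cD$ with $\cE \not\simeq \Vect$ and $\cE \not\simeq \cD$. Theorem~\ref{thm:decomp} then yields an equivalence of ribbon categories $\cD \simeq \cE \boxtimes \sfC_\cD(\cE)$, and by Remark~\ref{rem:modular-subcat} together with Theorem~\ref{thm:centmodular} (which transfers the non-degeneracy of $\cE$ to $\sfC_\cD(\cE)$), both factors are themselves modular tensor categories. Applying the inductive hypothesis separately to $\cE$ and to $\sfC_\cD(\cE)$, and concatenating the resulting decompositions, produces the desired factorization of $\cD$.

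The main point to verify---and the main obstacle---is that both factors $\cE$ and $\sfC_\cD(\cE)$ have strictly smaller Frobenius--Perron dimension than $\cD$, so that the induction is genuinely descending. Multiplicativity of $\FPdim$ under the Deligne product gives $\FPdim(\cD) = \FPdim(\cE)\cdot \FPdim(\sfC_\cD(\cE))$, and both factors have $\FPdim > 1$: for $\cE$ this is automatic since $\cE \not\simeq \Vect$, and for $\sfC_\cD(\cE)$ it follows from the double centralizer theorem \cite{Shi1}*{Theorem~4.9}, because $\sfC_\cD(\cE) \simeq \Vect$ would force $\cE \simeq \sfC_\cD(\sfC_\cD(\cE)) \simeq \sfC_\cD(\Vect) \simeq \cD$, contradicting our choice of $\cE$. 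Termination of the strong induction then follows from this strict decrease of $\FPdim$ together with the fact that in a finite tensor category the lattice of topologizing non-degenerate braided tensor subcategories is finite; equivalently, one may instead induct on the length of the longest chain of such subcategories of $\cD$, which is manifestly finite and strictly decreases when passing from $\cD$ to either $\cE$ or $\sfC_\cD(\cE)$.
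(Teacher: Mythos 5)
Your proof is correct and follows the same route the paper has in mind: the paper gives no explicit argument, treating the corollary as an immediate consequence of Theorem~\ref{thm:decomp} in the spirit of \cite{Mue}*{Theorem~4.5}, and your proof fills in exactly those details (decompose via Theorem~\ref{thm:decomp}, note both factors are modular by Remark~\ref{rem:modular-subcat} and Theorem~\ref{thm:centmodular}, recurse).

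The one place to tighten is termination. Strict decrease of $\FPdim$ alone is not a priori a well-founded induction, since $\FPdim$ lives in $\mR_{\geq 1}$; and your alternate claim that the lattice of topologizing non-degenerate braided tensor subcategories is finite, while plausible, is not something the paper establishes and would require its own argument. The cleanest way to close this is to observe that any finite tensor category $\cA \not\simeq \Vect$ satisfies $\FPdim(\cA) \geq 2$: if $\cA$ has at least two simple objects this is clear from $\FPdim(\cA) = \sum_i \FPdim(P_i)\FPdim(X_i)$, and if $\cA$ has only the simple object $\one$ but is not semisimple then $\FPdim(P_{\one}) \geq 2$. Since $\FPdim$ is multiplicative under $\boxtimes$, each decomposition step $\cD \simeq \cE \boxtimes \sfC_\cD(\cE)$ with both factors nontrivial replaces $\FPdim(\cD)$ by two factors each in $[2, \FPdim(\cD)/2]$, so the recursion depth is bounded by $\log_2 \FPdim(\cD)$. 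This also makes your ``longest chain'' variant rigorous, since if $\cE_1 \subsetneq \cE_2$ are both non-degenerate topologizing then $\cE_2 \simeq \cE_1 \boxtimes \sfC_{\cE_2}(\cE_1)$ with the second factor nontrivial, forcing $\FPdim(\cE_2) \geq 2\FPdim(\cE_1)$.
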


\begin{question} \label{ques:DKH}
Continuing Example~\ref{ex:decomp1}(2), when is the (not necessarily semisimple) modular category $\lmod{\Drin_K(H,H^*)}$ prime?
\end{question}

In Example~\ref{ex:uqsl2} below, we recall that the (non-semisimple, factorizable, ribbon) small quantum group $u_q(\mathfrak{sl}_2)$ arises as a braided Drinfeld double $\Drin_K(H,H^*)$. It is an interesting question to determine the primality of its module category, and as well as of the  module categories of other examples of non-semisimple braided Drinfeld doubles in the  next section. In the semisimple case, M\"uger offers examples of prime and non-prime module categories over Drinfeld doubles of groups \cite{Mue}*{Theorem 4.7, Table 1}.


\section{Examples of modular categories}\label{sec:examples}

In this part, we provide several examples of modular tensor categories using the relative center construction  [Theorem \ref{thm:ZBCmodular}]. In some cases, we also illustrate our decomposition result Theorem~\ref{thm:decomp} above. We start in Section~\ref{sec:firstex} by discussing relative centers over $\Vect$, monoidal centers of modules over Taft algebras, and their relation to $\lmod{u_q(\fr{sl}_2)}$. Next, we provide preliminary information about braided doubles of Nichols algebras  of diagonal type in Section~\ref{sec:Nichols}. We study the modularity of braided doubles of such Nichols algebras  in Section~\ref{sec:braided-mod}; the main result is Proposition~\ref{cor:UBqmod} there. Finally, in Section~\ref{ex:smallqg} we apply this result to module categories of small quantum groups (of Cartan type) and also to module categories of a braided Drinfeld double of Nichols algebras not of Cartan type. Throughout this section, we additionally assume that $\Bbbk$ has characteristic zero.

\subsection{First examples} \label{sec:firstex}

Here, we include some first examples of non-semisimple modular categories obtained from the general result of Theorem \ref{thm:ZBCmodular}.

\begin{example} \label{ex:Vec}
Take $\cB=\Vect$, the category of finite-dimensional $\Bbbk$-vector spaces  with its usual symmetric structure. Let $\cC$ be a finite tensor category over $\Bbbk$ (i.e., $\cC$ is $\Vect$-central) so that the set $\mathsf{Sqrt}_\cC(D,\xi_{D})$ is non-empty. For instance, if $\cC$ is unimodular finite ribbon category, then $\cC$ is spherical [Proposition~\ref{prop:ribbonspherical}] and hence $\mathsf{Sqrt}_\cC(D,\xi_{D}) \neq \varnothing$ [Remark \ref{rem:sph-rib}]. Then Theorem~\ref{thm:ZBCmodular} specializes to the result that $\cZ_{\Vect}(\cC)=\cZ(\cC)$ is  modular, recovering Theorem~\ref{thm:Shimizucenter}.

\end{example}

\begin{example}[(Drinfeld double of) the Taft algebra $T_{n}(q^{-2})$]\label{expl:taft} 
Let $n\geq 3$ be an integer, and let $q$ be a primitive root of unity so that $q^2$ has order $n$. Take $K=\Bbbk \mathbb{Z}_n$, for $\mathbb{Z}_n = \langle g ~|~ g^n =1 \rangle$, and set $\cB_q=\lmod{K}$. Here, $\cB_q$ is braided using the $R$-matrix
$R = \frac{1}{n} \sum_{i,j = 0}^{n-1} q^{-2ij}g^i \otimes g^j.$
Next, take the monoidal category
$$\cC:= \lmod{H}{(\cB_q)},  \; \;  \text{for } H:=\Bbbk[x]/(x^n) \in \HopfAlg(\cB_q),$$ 
with $\Delta(x) = x\otimes 1+1\otimes x$ and $\varepsilon(x) = 0$. As in  \cite{LW}*{Section~6}, consider the  \emph{Taft algebra} $T_n(q^{-2})$, which is the $\Bbbk$-Hopf algebra: 
$$T:=T_n(q^{-2}) =  \Bbbk \langle g,x \rangle / (g^n-1, ~x^n, ~gx - q^{-2} xg),$$
with $\Delta(g) = g \otimes g, ~\Delta(x) = g^{-1} \otimes x + x \otimes 1, ~\varepsilon(g) = 1, ~\varepsilon(x) =0, ~S(g) = g^{-1}, ~S(x) = -gx$. Then, we get an  equivalence of monoidal categories:
$$\cC \simeq \lmod{T_n(q^{-2})}.$$

Computations as in \cite{KR93}*{Proposition~7} show that if $n$ is even, then $\mathsf{Sqrt}_\cC(D,\xi_{D}) =\varnothing$; see Proposition~\ref{prop:KRvsShi}. If $n=2m-1$ is odd, then  the distinguished grouplike elements (see \eqref{eq:alpha_H}, \eqref{eq:g_H}) are given by 
$g_T=g$ and $\alpha_T\colon T\to \Bbbk$ with $\alpha_T(g)=q^{-2}$, $\alpha_T(x)=0.$
Thus, $a=g_T^{m}$ and $\zeta=\alpha_T^{m}$ are the unique elements satisfying the equations of Theorem \ref{thm:KR}. Hence, using Proposition
\ref{prop:KRvsShi}, the unique element $(V,\sigma_{V})$ of $\mathsf{Sqrt}_\cC(D,\xi_{D})$ is given by the $1$-dimensional $T$-module $V=\Bbbk v$, where $g\cdot v=q^{2m}$, $x\cdot v=0$, and
$\sigma_{V}(W)\colon V\otimes W\isomorph W^{**}\otimes V$, with $\sigma_{V}(W)(v\otimes w)= (g^m \cdot w)\otimes v$, for $W\in \Ob \cC$, $w\in W$.
Thus, $\mathsf{Sqrt}_\cC(D,\xi_{D})\neq \varnothing$ if and only if $n$ is odd.
In this case,  by Theorem~\ref{thm:Shimizucenter}, we get that $\cZ(\cC)$ is modular with a unique ribbon structure. We further have an equivalence of modular categories
$$\cZ(\cC) \simeq \lmod{\text{Drin}(T_n(q^{-2}))},$$
where modularity of the right-hand side is inherited from that on the left-hand side. We note that $\cC$ is \emph{not} a spherical category in the sense of Definition \ref{def:spherical} since $D\not\cong \one$, cf. \cite{DSS}*{Section 3.5.2} for the case $n=3$. In particular, $\alpha_T$ fails to equal $\varepsilon$ as in Proposition~\ref{prop:Hspherical}.
\end{example}

\begin{example}[The small quantum group $u_q(\mathfrak{sl}_2)$] \label{ex:uqsl2}
For $q$ as in Example \ref{expl:taft}, with $n$ odd, consider the small quantum group $u_q(\mathfrak{sl}_2)$ which is generated by  $k$, $e$, $f$, subject to the relations
$$k^n =1, \quad e^n = f^n = 0, \quad ke = q^2 ek, \quad kf = q^{-2}fk, \quad ef-fe = \frac{k - k^{-1}}{q-q^{-1}},$$
with coproduct and counit determined on generators by  
\begin{gather*}
\Delta(k) = k \otimes k, \qquad \Delta(e) = 1 \otimes e + e \otimes k,  \qquad \Delta(f) = k^{-1} \otimes f + f \otimes 1, \\
\varepsilon(k) = 1,  \qquad \varepsilon(e) = \varepsilon(f) =0.
\end{gather*}
For the categories $\cB_q$ and $\cC$ as in Example~\ref{expl:taft}, the relative center $\cZ_{\cB_q}(\cC)$ is equivalent as a monoidal category to $\lmod{u_q(\fr{sl}_2)}$, see \cite{LW}*{Proposition 6.7(i)}. Thus, $\lmod{u_q(\fr{sl}_2)}$ is a braided category.
We have seen in Example \ref{expl:taft} that since $n$ is odd, $\mathsf{Sqrt}_\cC(D,\xi_{D})$ contains a unique element. Moreover, in this case one checks that the underlying braided category $\cB_q$ is non-degenerate using \cite{EGNO}*{Exercise~8.6.4}. Thus, by Theorem \ref{thm:ZBCmodular}, $\cZ_{\cB_q}(\cC)\simeq \lmod{u_q(\fr{sl}_2)}$ is modular. 

Continuing Example~\ref{ex:decomp1}, we see that there is a decomposition of modular categories
$$\lmod{\Drin(T_n(q^{-2}))}\simeq \cB_q\boxtimes \lmod{u_q(\fr{sl}_2)}.$$
\end{example}

A vast generation of this example, using braided Drinfeld doubles of Nichols algebras of diagonal type, will be given in the following sections.

\subsection{Braided Drinfeld doubles of Nichols algebras of diagonal type}\label{sec:Nichols}
 In this part, we discuss Nichols algebras of diagonal type, a large class of Hopf algebras in braided categories $\lcomod{\Bbbk G}$, for $G$ a finite abelian group. To start, consider the following notation that will be used throughout the rest of the section.
 
\begin{notation}[$G$, $g_i$, $m_i$, $\Lambda$, $e_i$,  ${\bf i}$, $g_{\bf i}$,  $\delta_{\bf i}$, $K$, $r$, $b$, $\cB_{\qs}$, $\gamma_{\bf i}$, $\gamma_i$, $\ov{\gamma}_{\bf i}$, $\ov{\gamma}_i$] 
\label{not:Cq-etc} Fix the  notation below.
\begin{itemize}
    \item Let $G=\langle g_1,\ldots, g_n \rangle$ be a finite abelian group, where $g_i$ has order $m_i$.
    
     \smallskip
     
    \item Take $\Lambda$ to denote the lattice $\mZ_{m_1}\times \cdots \times \mZ_{m_n}$, and let $e_i$ be the $i$-th elementary vector.
    
    \smallskip
    
    \item For ${\bf i}=(i_1,\ldots, i_n)\in \Lambda$, write $g_{\bf i}:=g_1^{i_1}\ldots g_n^{i_n}$ and use additive notation on indices ${\bf i}$, so e.g. $g_{-e_i}=g_{i}^{-1}$.
    
    \smallskip

    \item Take $K$ to be the group algebra $\Bbbk G$.
    
    \smallskip 
    
    \item Denote the basis of $K^*$  dual to $\{g_{\bf i}\}$ by $\{\delta_{\bf i}\}$, so that the pairing of $K^*$ and $K$ is given by $\langle \delta_{\bf i},g_{\bf j}\rangle=\delta_{{\bf i},{\bf j}}$. 
    
    \smallskip
    
    \item Let $\qs= (q_{ij}) \in \Mat_n(\Bbbk)$ with $q_{ij} \neq 0$, and let 
    $\cB_{\qs}$ be the braided category $\lcomod{K}_r$ with dual R-matrix $r$ of $K$ given by $r(g_i \otimes g_j) = q_{ji}$.
    
    \smallskip 
    
    \item Consider the symmetric bilinear form $b$ on $K$ given by $b(g_{\bf i},g_{\bf j}):=r(g_{\bf i},g_{\bf j})r(g_{\bf j},g_{\bf i})$. In particular, $b$ is determined by $b(g_i,g_j)=q_{ij}q_{ji}.$
    
     \smallskip
    
    \item For ${\bf i}=(i_1,\ldots, i_n) \in \Lambda$, take the grouplike elements of $K^*$: 
$\gamma_{\bf i}:=\sum_{\bf j}r(g_{\bf j}\otimes g_{\bf i})\delta_{\bf j}$  and  $\ov{\gamma}_{\bf i}:=\sum_{\bf j}r(g_{\bf i}\otimes g_{\bf j})\delta_{\bf j}$. We write $\gamma_i := \gamma_{e_i}$ and $\ov{\gamma}_i := \ov{\gamma}_{e_i}$. Then $\gamma_{\bf i}=\gamma_i^{i_1}\ldots \gamma_i^{i_n}$ and $\ov{\gamma}_{\bf i}=\ov{\gamma}_i^{i_1}\ldots \ov{\gamma}_i^{i_n}$.
\end{itemize}
\end{notation}

We record a fact about non-degeneracy that will be used several times later.

\begin{lemma}[\cite{EGNO}*{Example 8.13.5}, \cite{DGNO}*{Section 2.11}] \label{lem:Bq-nondeg}
The braided category $\cB_{\qs}$ is non-degenerate if and only if the symmetric pairing $b$ is non-degenerate. \qed
\end{lemma}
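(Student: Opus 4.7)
The plan is to compute the M\"uger center $(\cB_{\qs})'$ directly on simple objects and identify the condition for it to reduce to $\Vect$ with the non-degeneracy of $b$.

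First, since $G$ is finite abelian and $\mathrm{char}\, \Bbbk = 0$, the category $\lcomod{K}$ is semisimple, and every simple object is isomorphic to a one-dimensional comodule $\Bbbk_{\bf i}$ with coaction $v \mapsto g_{\bf i} \otimes v$, indexed by ${\bf i} \in \Lambda$. Every object of $\cB_{\qs}$ decomposes as a direct sum of such simples.

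Second, I would use the explicit formula for the braiding of $\cB_{\qs} = \lcomod{K}_r$ recalled in Section~\ref{sec:Hmod}: on simple components $\Bbbk_{\bf i}$ and $\Bbbk_{\bf j}$, the braiding acts by the scalar $r(g_{\bf j}, g_{\bf i})$ (up to the tensor flip), so that the squared braiding $c_{\Bbbk_{\bf j}, \Bbbk_{\bf i}} c_{\Bbbk_{\bf i}, \Bbbk_{\bf j}}$ acts on $\Bbbk_{\bf i} \otimes \Bbbk_{\bf j}$ by
\[
r(g_{\bf j}, g_{\bf i})\, r(g_{\bf i}, g_{\bf j}) \;=\; b(g_{\bf i}, g_{\bf j}),
\]
by definition of $b$.

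Third, since the M\"uger center $(\cB_{\qs})'$ is a topologizing subcategory of the semisimple category $\cB_{\qs}$, it is determined by which simples it contains. From the computation above, $\Bbbk_{\bf i} \in (\cB_{\qs})'$ if and only if $b(g_{\bf i}, g_{\bf j}) = 1$ for every ${\bf j} \in \Lambda$. Therefore $(\cB_{\qs})' \simeq \Vect$ is equivalent to the statement that the only ${\bf i} \in \Lambda$ with $b(g_{\bf i}, g_{\bf j}) = 1$ for all ${\bf j}$ is ${\bf i} = 0$; that is, the induced homomorphism $G \to \widehat{G}$, $g \mapsto b(g, -)$, is injective, which (extending linearly) is precisely the non-degeneracy of $b$ on $K$.

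The main obstacle is simply bookkeeping: checking that the scalar produced by the double application of the coaction-evaluation formula of Section~\ref{sec:Hmod} really is $r(g_{\bf j}, g_{\bf i})\, r(g_{\bf i}, g_{\bf j})$ rather than some transposed or inverse version, and confirming that the two standard interpretations of ``non-degeneracy'' of $b$ (injectivity of $g \mapsto b(g,-)$ versus non-degeneracy of the $\Bbbk$-bilinear extension to $K$) coincide for a symmetric bicharacter on a finite abelian group in characteristic zero.
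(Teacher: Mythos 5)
Your argument is correct. Note that the paper supplies no proof of this lemma---it simply cites \cite{EGNO}*{Example 8.13.5} and \cite{DGNO}*{Section 2.11}, which record the correspondence between pointed braided fusion categories and pre-metric groups---and your computation is precisely the standard argument underlying those references. Tracing the braiding formula from Section~\ref{sec:Hmod} does give $c_{\Bbbk_{\bf i},\Bbbk_{\bf j}}(v\otimes w)=r(g_{\bf j},g_{\bf i})\,w\otimes v$, hence the double braiding scalar $r(g_{\bf j},g_{\bf i})r(g_{\bf i},g_{\bf j})=b(g_{\bf i},g_{\bf j})$ as you claim; and the identification of non-degeneracy of the $\Bbbk$-bilinear extension of $b$ to $K=\Bbbk G$ with injectivity of $g\mapsto b(g,-)\in\widehat{G}$ holds because, when the latter map is injective, the $|G|$ distinct characters $b(g,-)$ are linearly independent and hence the Gram matrix $(b(g_{\bf i},g_{\bf j}))_{{\bf i},{\bf j}}$ is invertible, while a nontrivial kernel element produces repeated rows.
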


Next, we recall the definition of a Nichols algebra of diagonal type from work of Andruskiewitsch--Schneider \cite{AS}.

\begin{definition}[$\BB(V)$, $\II(V)$]\label{def:Nichols}
Retain the notation above.
Let $V$  be an object in $\lYD{K}$ (with braiding $c$). Recall that the tensor algebra $\TT(V)=\bigoplus_{n\geq 0} V^{\otimes n}$ is a natural Hopf algebra object in $\lYD{K}$, such that all elements $v\in V$ are primitive, i.e., $\Delta(v)=v\otimes 1+ 1\otimes v$. 
\begin{enumerate}
   \item The \emph{Nichols algebra} $\BB(V)$ is the quotient of $\TT(V)$ be the unique largest homogeneously generated Hopf ideal $\II(V)\subseteq \bigoplus_{n>1}V^{\otimes n}$. 
    
    \smallskip
    
   \item We say that $\BB(V)$ is of \emph{diagonal type} if there exists a basis $x_1, \dots, x_n$ of $V$ so that $$c(x_i\otimes x_j)=q_{ij}x_j\otimes x_i.$$
 \end{enumerate}
\end{definition}

There exists a complete classification of finite-dimensional Nichols algebras of diagonal type over a field $\Bbbk$ or characteristic zero \cite{Hec2}. The Nichols algebras $\BB(V)$ have a PBW basis \cite{Kha} and generalized root systems \cite{Hec1}. For finite-dimensional Nichols algebras of diagonal type, relations for the ideal $\II(V)$ were found in \cites{Ang1,Ang2} and are detailed in many examples in \cite{AA}.

\begin{lemma}[$\BB_\qs$, $\BB_\qs^*$] \label{lem:Bq}
Consider the Yetter--Drinfeld module $V$ over $K$ with action $g_i\cdot x_j=q_{ij}x_j$ and coaction $\delta(x_i)= g_i \otimes x_i$. Then:
\begin{enumerate}
    \item[\textnormal{(1)}] The Nichols algebra, $\BB(V) \in \HopfAlg(\lYD{K})$ is of diagonal type, which we denote by $\BB_{\qs}$.
    \smallskip
    
    \item[\textnormal{(2)}] If $\BB_\qs$ is finite-dimensional, then $\BB_\qs$ and $\BB_\qs^*$ are dually paired Hopf algebras in $\cB_{\qs}$. In particular, the pairing $\ev\colon \BB_\qs^*\otimes \BB_\qs\to \Bbbk$ is uniquely induced from the pairing of $V^*$ and $V$.
\end{enumerate}
\end{lemma}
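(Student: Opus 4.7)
For part (1), I will simply compute the Yetter--Drinfeld braiding on $V\otimes V$. Recall from Section~\ref{sec:Hmod} that the braiding on $\lYD{K}$ is $c^{\mathsf{YD}}_{V,V}(v\otimes w)=(v_{(-1)}\cdot w)\otimes v_{(0)}$. Applied to basis vectors using the prescribed coaction $\delta(x_i)=g_i\otimes x_i$ and action $g_i\cdot x_j=q_{ij}x_j$, this gives
$c^{\mathsf{YD}}_{V,V}(x_i\otimes x_j)=(g_i\cdot x_j)\otimes x_i=q_{ij}x_j\otimes x_i.$
Thus the underlying braided vector space of $V$ is of diagonal type with matrix $\qs$, so by Definition~\ref{def:Nichols}(2) the Nichols algebra $\BB(V)=:\BB_\qs$ is of diagonal type.

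For part (2), the first step is to realize $V$ in $\cB_\qs=\lcomod{K}_r$ and check that this is compatible with the Yetter--Drinfeld structure above. Clearly $V$ is a $K$-comodule via $\delta$. Using the formula for the coquasi-triangular braiding recalled in Section~\ref{sec:Hmod}, one computes $c^{\cB_\qs}_{V,V}(x_i\otimes x_j)=r(g_j\otimes g_i)\,x_j\otimes x_i=q_{ij}\,x_j\otimes x_i$, matching $c^{\mathsf{YD}}_{V,V}$. More conceptually, the coquasi-triangular structure supplies a faithful braided tensor embedding $\iota\colon \cB_\qs\hookrightarrow \lYD{K}$ that equips a comodule $(W,\delta_W)$ with action $k\cdot w:=r(w_{(-1)}\otimes k)\,w_{(0)}$, and our $V$ is the $\iota$-image of itself. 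Since the Nichols algebra construction is determined by the underlying braided object, $\BB_\qs$ is naturally a Hopf algebra in $\cB_\qs$. The dual object $V^*\in \cB_\qs$ has coaction $\delta(x_i^*)=g_i^{-1}\otimes x_i^*$ (obtained from rigidity of $\cB_\qs$), and the evaluation $\ev_V\colon V^*\otimes V\to\Bbbk$ is by construction a morphism in $\cB_\qs$.

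For the duality pairing, I will invoke the standard construction for Nichols algebras (see \cite{AS}). The pairing $\ev_V$ extends uniquely to a graded Hopf pairing $\mathfrak{T}(V^*)\otimes\mathfrak{T}(V)\to\Bbbk$ on the tensor Hopf algebras in $\cB_\qs$, by requiring it to be compatible with multiplication on $\mathfrak{T}(V^*)$ and comultiplication on $\mathfrak{T}(V)$ (with all permutations of tensor factors performed via the braiding $c^{\cB_\qs}$). A classical computation identifies the radicals of this pairing on each side as $\mathfrak{I}(V^*)$ and $\mathfrak{I}(V)$ respectively, so the pairing descends to a non-degenerate Hopf pairing $\BB(V^*)\otimes \BB(V)\to\Bbbk$. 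When $\BB_\qs$ is finite-dimensional, non-degeneracy identifies $\BB_\qs^*$ with $\BB(V^*)$ as a Hopf algebra in $\cB_\qs$, and the pairing is the required $\ev$. Uniqueness of the extension follows since $\BB_\qs^*$ (resp.\ $\BB_\qs$) is generated in degree one by $V^*$ (resp.\ $V$).

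The lemma is largely bookkeeping, so I do not expect a serious obstacle. The only nontrivial ingredient is the identification of $\mathfrak{I}(V)$ with the radical of the Hopf pairing, which is a classical fact in the theory of Nichols algebras and can simply be cited.
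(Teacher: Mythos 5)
Your proposal is correct and follows essentially the same route as the paper's proof: part (1) by observing that the Yetter--Drinfeld braiding on $V$ is diagonal with matrix $\qs$, and part (2) by identifying $V$ (and hence $\BB_\qs$) as living in the image of the braided embedding $\cB_\qs \hookrightarrow \lYD{K}$ defined via the dual R-matrix $r$, then extending $\ev_V$ to a unique non-degenerate Hopf pairing $\BB(V^*)\otimes\BB(V)\to\Bbbk$. The only cosmetic difference is that the paper cites \cite{HS}*{Corollary~7.2.8} for the extension-and-radical step rather than sketching it, but the content is the same.
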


\begin{proof}
Part (1) follows from \cite{AS}*{Section~2}. 

For part (2), consider the braided monoidal functor  $\Phi_r\colon \cB_{\qs}\to \lYD{K}$ sending $(V,\delta_V)$ to $(V,a_V, \delta_V)$, where
$a_V(k\otimes v)=r(v^{(-1)}\otimes k)\otimes v^{(0)}$, for all $k \in K$ and $v \in V$. Then the Yetter--Drinfeld module $V$ is the image of the $K$-comodule $(V,\delta)$ under $\Phi_r$ and $\BB_\qs$ is a Hopf algebra in the image of $\Phi_r$. Thus, $\BB_\qs$, forgetting the $K$-action, is a Hopf algebra in $\cB_{\qs}$.

It is well-known that the duality pairing $\ev\colon V^*\otimes V\to \Bbbk$  extends to a unique non-degenerate pairing $\ev\colon \BB(V^*)\otimes \BB(V)\to \Bbbk$ of Hopf algebras in $\lYD{K}$, see e.g. \cite{HS}*{Corollary~7.2.8}. By the above observations, this is a non-degenerate pairing of Hopf algebras in $\cB_{\qs}$.
\end{proof}

\smallskip

Having viewed $\BB_\qs$ and $\BB_\qs^*$ as braided Hopf algebras in $\cB_{\qs}=\lcomod{K}_r$, we are able to compute their braided Drinfeld double over the Hopf algebra $K^*$, cf. Example \ref{expl:br-Drin}; see \cite{L17}*{Section 3.2} for the presentation of general braided Drinfeld doubles used here. We fix the following notation.

\begin{notation}[$x_i$, $y_i$]  For $V$ in Definition~\ref{def:Nichols} and in Lemma~\ref{lem:Bq} above, we
fix dual bases $x_1,\ldots, x_n$ of $V$, and $y_1,\ldots, y_n$ of $V^*$, and denote the resulting generators of $\BB(V)$, respectively, of the dual Nichols algebra $\BB(V^*)$ by the same symbols.
\end{notation}

\begin{proposition}[$\Drin_{K^*}(\BB_\qs^*,\BB_\qs)$]\label{NicholsBD-pres}
Retain the notation above and assume that the braided Hopf algebra $\BB_\qs$ in $\lcomod{K}_r$ from Lemma~\ref{lem:Bq} is finite-dimensional. Then the braided Drinfeld double $\Drin_{K^*}(\BB_\qs^*,\BB_\qs)$  is  a Hopf algebra generated as an algebra by elements $\{\delta_{\bf i}\}_{{\bf i} \in \Lambda}$, $\{x_i\}_{i=1,\ldots, n}$, and $\{ y_i\}_{i=1,\ldots, n}$, subject to the relations $\II(V)$ and $\II(V^*)$, along with
\begin{gather*}
\delta_{\bf i}\delta_{\bf j}=\delta_{{\bf i},{\bf j}}\delta_{\bf i},\qquad 
    \delta_{\bf i}x_j=x_j \delta_{{\bf i}-e_j}, \qquad    \delta_{\bf i}y_j=y_j \delta_{{\bf i}+e_j}, \qquad 
    y_ix_j-q_{ji}^{-1}x_jy_i=(1-\gamma_i\ov{\gamma_i})\delta_{i,j}.
\end{gather*}
Here, it is understood that $\delta_{\bf i}= \delta_{\bf j}$ if  ${\bf i}={\bf j} \in \Lambda$.
The coproduct and counit are determined by 
\begin{gather*}
    \Delta(\delta_{\bf i})=\sum_{{\bf a}+{\bf b}={\bf i}}\delta_{\bf a}\otimes \delta_{\bf b}, \qquad \Delta(x_i)=x_i\otimes 1+\gamma_i\otimes x_i, \qquad \Delta(y_i)=y_i\otimes 1+\ov{\gamma}_i\otimes y_i,\\
    \varepsilon(\delta_{\bf i})=\delta_{{\bf i},0}, \qquad \varepsilon(x_i)=\varepsilon(y_i)=0.
\end{gather*}
The antipode is determined by 
\begin{gather*}
S(\delta_{\bf i})= \delta_{\bf -i},\qquad    S(x_i)=-\gamma_i^{-1}x_i, \qquad  S(y_i)=-\ov{\gamma}_i^{-1}y_i. 
\end{gather*} 
The Hopf algebra $\Drin_{K^*}(\BB_\qs^*,\BB_\qs)$ is quasi-triangular with R-matrix given by
$$R_{\Drin}=\sum_{\alpha}\sum_{\bf i} \delta_{\bf i}y_{\alpha}\otimes x_\alpha \gamma_{\bf i},$$
where $\alpha$ indexes a basis $\{x_\alpha\}$ of $\BB_\qs$ with dual basis  $\{y_\alpha\}$ of $\BB_\qs^*$ with respect to the paring $\ev$.
\end{proposition}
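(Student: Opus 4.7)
The plan is to apply the general construction of braided Drinfeld doubles from \cite{L17}*{Section~3.2} (as recalled in Example~\ref{expl:br-Drin}) to the specific input data $(K^*, \BB_\qs^*, \BB_\qs, \ev)$. Note that since $G$ is abelian, $K=\Bbbk G$ is a cocommutative and commutative coquasi-triangular Hopf algebra, so the dual Hopf algebra $K^*$ is quasi-triangular with R-matrix $R_{K^*}=\sum_{{\bf i},{\bf j}}r(g_{\bf i}\otimes g_{\bf j})\delta_{\bf i}\otimes \delta_{\bf j}$. The equivalence $\cB_{\qs}=\lcomod{K}_r\simeq \lmod{K^*}$ (using finite-dimensionality of $K$) transports $\BB_\qs, \BB_\qs^*$ to dually paired Hopf algebras in $\lmod{K^*}$ by Lemma~\ref{lem:Bq}. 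Under this equivalence, the coaction $\delta(x_i)=g_i\otimes x_i$ translates to the $K^*$-action $\delta_{\bf j}\cdot x_i=\delta_{{\bf j},e_i}x_i$, or equivalently, to the grouplike action $\gamma_{\bf j}\cdot x_i=r(g_i\otimes g_{\bf j})x_i$; similarly, $\gamma_{\bf j}\cdot y_i=r(g_{\bf j}\otimes g_i)^{-1}y_i$.

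Next, I would verify each family of relations. The relations $\delta_{\bf i}\delta_{\bf j}=\delta_{{\bf i},{\bf j}}\delta_{\bf i}$ and the coproduct on $\delta_{\bf i}$ come from the Hopf algebra structure of $K^*$, and $\II(V), \II(V^*)$ are inherited from the Nichols algebras. The commutation $\delta_{\bf i}x_j=x_j\delta_{{\bf i}-e_j}$ (and analogously for $y_j$) is the smash product relation $k\cdot h = k_{(1)}\,h\,S(k_{(2)})$ specialized to $k=\delta_{\bf i}$, $h=x_j$, using the action above and $\Delta(\delta_{\bf i})=\sum_{{\bf a}+{\bf b}={\bf i}}\delta_{\bf a}\otimes \delta_{\bf b}$. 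The coproducts $\Delta(x_i)=x_i\otimes 1+\gamma_i\otimes x_i$ and $\Delta(y_i)=y_i\otimes 1+\ov{\gamma}_i\otimes y_i$ are the standard Radford biproduct formulas for bosonized primitive elements, where the grouplikes $\gamma_i, \ov{\gamma}_i\in K^*$ record the braiding data. The antipode is then determined on generators by the standard computation $m(S\otimes\ide)\Delta=u\varepsilon$.

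The core calculation is the cross relation $y_ix_j-q_{ji}^{-1}x_jy_i=(1-\gamma_i\ov{\gamma_i})\delta_{i,j}$. This arises from the braided commutator defining the double from \cite{L17}*{Equation~(3.15) and Proposition~3.13}, which on primitive generators reads (schematically)
\[
y_ix_j - \ev(y_i^{(-1)}\triangleright x_j^{(-1)})\,x_j^{(0)}y_i^{(0)} \;=\; \ev(y_i\otimes x_j)\,\one - \ev(\cdots)\,\gamma_i\ov{\gamma}_i\,\ev(y_i\otimes x_j),
\]
where the braiding $c(y_i\otimes x_j)=q_{ji}^{-1}x_j\otimes y_i$ in $\cB_{\qs}$ produces the scalar $q_{ji}^{-1}$, and $\ev(y_i\otimes x_j)=\delta_{i,j}$ produces the right-hand side. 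All other cross terms between higher-degree generators follow by imposing that these generators are primitive modulo $K^*$ and using the pairing $\ev$ of Lemma~\ref{lem:Bq}(2). Finally, the R-matrix formula $R_{\Drin}=\sum_{\alpha,{\bf i}}\delta_{\bf i}y_\alpha\otimes x_\alpha\gamma_{\bf i}$ is obtained by combining $R_{K^*}$ above with the canonical element $\sum_\alpha y_\alpha\otimes x_\alpha\in \BB_\qs^*\otimes \BB_\qs$ associated to the pairing $\ev$; cf.\ the general formula in \cite{L17}*{Proposition~3.13}.

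The step I expect to require the most care is matching conventions: the scalar $q_{ji}^{-1}$ versus $q_{ij}$ in the $yx$-relation depends on which side the braiding is applied and on the chosen convention for $r$ versus its conjugate, and similarly the direction of the grouplikes $\gamma_i$ vs.\ $\ov{\gamma}_i$ in the coproducts and R-matrix must be tracked consistently. These are bookkeeping issues rather than conceptual obstacles, and once the conventions of Notation~\ref{not:Cq-etc} are aligned with those of \cite{L17}, every relation listed in the statement falls out of the general presentation.
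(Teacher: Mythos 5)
Your proof takes essentially the same approach as the paper's: transport $\BB_\qs, \BB_\qs^*$ across the equivalence $\lcomod{K}_r \simeq \lmod{K^*}$ (using that $G$ abelian makes $(K^*)^{\cop}=K^*$), then specialize the general presentation from \cite{L17}*{Section~3.2} to identify the actions, relations, coproducts, antipode, and R-matrix.

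One small correction on the bookkeeping you yourself flagged as the delicate step: the R-matrix for $K^*$ should be $\sum_{{\bf i},{\bf j}}\, r(g_{\bf j}\otimes g_{\bf i})\,\delta_{\bf i}\otimes\delta_{\bf j}\;=\;\sum_{\bf i}\delta_{\bf i}\otimes\gamma_{\bf i}$, not $\sum_{{\bf i},{\bf j}}\, r(g_{\bf i}\otimes g_{\bf j})\,\delta_{\bf i}\otimes\delta_{\bf j}$ as you wrote; the arguments of $r$ are swapped. This follows from the paper's definition $r^*=(r\otimes \ide_{K^*\otimes K^*})(\ide_K\otimes\coev_K\otimes\ide_{K^*})\coev_K$ and is the choice consistent with the braiding convention $c(x_i\otimes x_j)=q_{ij}\,x_j\otimes x_i$ (recall $r(g_i\otimes g_j)=q_{ji}$, so with your ordering the braiding would come out as $q_{ji}$ instead). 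Since you explicitly cautioned that $r$ vs.\ its transpose needed to be tracked, this is exactly the convention slip you anticipated, not a conceptual gap.
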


\begin{proof}
First, note that for $(K,r)$ coquasi-triangular, $((K^*)^{\cop}, r^*)$ is a quasi-triangular Hopf algebra, where $(K^*)^\cop=(K,m,u,\Delta^{\oop},
\varepsilon,S^{-1})$ is the co-opposite Hopf algebra with R-matrix given by the dual of $r$, i.e.
$$R:=r^*=(r\otimes \ide_{K^*\otimes K^*})(\ide_{K}\otimes \coev_K\otimes\ide_{K^*})\coev_K.$$
The functor 
$$\Phi\colon \lcomod{K}_r\longrightarrow \lmod{(K^*)^{\cop}}_{R}, \quad (V,\delta)\longmapsto (V,a^*_V), \quad a^*_V:=(\ev_K\otimes \ide_V)(\ide_{K^*}\otimes \delta),$$
defines an equivalence of braided tensor categories. In the case $K=\Bbbk G$, $(K^*)^{\cop}=K^*$ and, thus, $\BB_\qs^*$ and $\BB_\qs$ can be regarded as dually paired Hopf algebras in $\lmod{K^*}_{R}$.

The result now follows from specifying the presentation from \cite{L17}*{Section 3.2} to the case $\Drin_{K^*}(\BB_\qs^*,\BB_\qs)$. For this, observe that $R=r^*$ for $K^*$ is given by  $R^*=\sum_{{\bf i}}\delta_{\bf i}\otimes \gamma_{\bf i}\in K^*\otimes K^*$
and that the action and coaction of $K^*$ on $V$ are given by 
\begin{align*}
\delta_{\bf i}\cdot x_j=\delta_{{\bf i},e_j}x_j,  \qquad \delta(x_j)=\gamma_{j}\otimes x_j, \qquad \gamma_{i}\cdot x_j=q_{ij}x_j, \qquad  \ov{\gamma}_{i}\cdot x_j=q_{ji}x_j.
\end{align*}

\vspace{-.25in}

\qedhere
\end{proof}

\begin{remark}[$k_{\bf i},k_i, G'$]\label{rem:ki}
Note that $\Drin_{K^*}(\BB_\qs^*,\BB_\qs)$ is a $\mZ$-graded Hopf algebra where $\deg \delta_{\bf i}=0$, for ${\bf i}\in \Lambda$, $\deg x_i=1$ and $\deg y_i=-1$, for $i=1,\ldots,n$. It has a triangular decomposition on $\BB_\qs^*\otimes K^*\otimes \BB_\qs$. Modules over this Hopf algebra can be described as a relative monoidal center, cf. Example \ref{expl:br-Drin}.
For ${\bf i}\in \Lambda$, we denote $k_{\bf i}:=\gamma_{\bf i}\ov{\gamma}_{\bf i}$ and $k_i:=\gamma_{i}\ov{\gamma}_{i}$.
When the braided category $\cB_{\qs}$ is non-degenerate [Lemma~\ref{lem:Bq-nondeg}], $K^*$ is isomorphic to the group algebra $\Bbbk G'$, where $G'=\langle k_1,\ldots,k_n \rangle$ is isomorphic to $G$. Thus, in this case, $\Drin_{K^*}(\BB_\qs^*,\BB_\qs)$ has a triangular decomposition $\BB_\qs\otimes \Bbbk G' \otimes \BB_\qs^*$. We note neither $r$ nor $r^\oop$ are necessarily non-degenerate pairings, so $\langle \gamma_1,\ldots,\gamma_n \rangle$ and $\langle \ov{\gamma}_1,\ldots,\ov{\gamma}_n \rangle$ are, in general, proper subgroups of $G'$ (see, e.g., Example \ref{expl-new} below).
\end{remark}

The next result, relating braided Drinfeld doubles and relative centers,  will be of use later.

\begin{proposition}[{\cite{L18}*{Section 4.2}}]\label{prop:Drincenter}
Retain Notation \ref{not:Cq-etc} and set $\cC=\lmod{\BB_\qs}(\cB_{\qs})$.  Then there exists an equivalence of braided monoidal categories
$$\cZ_{\cB_{\qs}}(\cC)\isomorph \lmod{\Drin_{K^*}(\BB_\qs^*,\BB_\qs)}.$$

\vspace{-.25in}

\qed
\end{proposition}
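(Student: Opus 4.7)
The plan is to reduce the claim to Example~\ref{expl:br-Drin}, which already handles the case where the ambient braided category is the representation category of a quasi-triangular Hopf algebra. First, I would transport everything from the comodule picture to the module picture using the braided equivalence
$$\Phi\colon \cB_{\qs} = \lcomod{K}_r \;\overset{\sim}{\longrightarrow}\; \lmod{(K^*)^{\cop}}_{R}, \qquad R := r^*,$$
from the proof of Proposition~\ref{NicholsBD-pres}. Since $K = \Bbbk G$ is cocommutative, $(K^*)^{\cop} = K^*$. Under $\Phi$, the braided Hopf algebra $\BB_\qs \in \HopfAlg(\cB_{\qs})$ transports to a Hopf algebra (still denoted $\BB_\qs$) in $\lmod{K^*}_R$, and bosonization identifies
$$\cC = \lmod{\BB_\qs}(\cB_{\qs}) \;\simeq\; \lmod{\BB_\qs}(\lmod{K^*}_R) \;\simeq\; \lmod{\BB_\qs \rtimes K^*}.$$

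Next, I would invoke Example~\ref{expl:br-Drin} with ambient braided category $\lmod{K^*}$ and Hopf algebra $H := \BB_\qs$ (so $H^* = \BB_\qs^*$), yielding braided equivalences
$$\cZ_{\lmod{K^*}}(\lmod{\BB_\qs \rtimes K^*}) \;\simeq\; \lYD{\BB_\qs}(\lmod{K^*}) \;\simeq\; \lmod{\Drin_{K^*}(\BB_\qs, \BB_\qs^*)}.$$
A direct inspection of the presentation given in Proposition~\ref{NicholsBD-pres} shows that interchanging the roles of the dually paired pair $(\BB_\qs^*, \BB_\qs)$ produces an isomorphic Hopf algebra, so $\Drin_{K^*}(\BB_\qs, \BB_\qs^*) \cong \Drin_{K^*}(\BB_\qs^*, \BB_\qs)$. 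Combining these with the first step, and tracking how the subcategory of $\cZ(\cC)$ centralizing the image of $\cB_{\qs}$ corresponds under $\Phi$ to the subcategory of $\cZ(\lmod{\BB_\qs \rtimes K^*})$ centralizing the image of $\lmod{K^*}$, then gives the desired braided equivalence.

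The main obstacle is the compatibility of all the half-braidings through these three equivalences. Specifically, one must verify (i) that $\Phi$ carries half-braidings defining objects of $\cZ_{\cB_{\qs}}(\cC)$ in the sense of Definition~\ref{def:relcenter} to half-braidings defining objects of $\cZ_{\lmod{K^*}}(\lmod{\BB_\qs \rtimes K^*})$, preserving the centralizing condition; and (ii) that on the Yetter--Drinfeld side, the actions of the generators $\delta_{\bf i}$, $x_i$, and $y_i$ from Proposition~\ref{NicholsBD-pres} correspond, respectively, to the $K^*$-action, the $\BB_\qs$-action, and the $\BB_\qs$-coaction (encoded by the half-braiding with the image of $\BB_\qs$) on the underlying object. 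Both checks are mechanical given the explicit presentation of $\Drin_{K^*}(\BB_\qs^*, \BB_\qs)$ and the standard description of $\lYD{\BB_\qs}(\lmod{K^*})$ as modules over a braided Drinfeld double.
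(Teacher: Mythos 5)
The paper gives no proof of this proposition at all — the statement is immediately followed by $\qed$ and is a direct citation of \cite{L18}*{Section 4.2}. So there is no ``paper's own proof'' to compare against; what you supply is a genuine reconstruction, and it follows the route the paper implicitly intends: transport to the module picture via $\Phi$, identify $\cC$ with $\lmod{\BB_\qs \rtimes K^*}$ by bosonization, and invoke the general statement of Example~\ref{expl:br-Drin}. Be aware, though, that Example~\ref{expl:br-Drin} is itself only cited from \cite{L18}*{Proposition 3.36} and \cite{Maj99}, so your argument trades one external citation for another rather than establishing the equivalence from scratch — which is acceptable given what the paper itself does.

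The one place where your write-up papers over a real issue is the assertion that ``direct inspection of the presentation'' in Proposition~\ref{NicholsBD-pres} yields $\Drin_{K^*}(\BB_\qs, \BB_\qs^*) \cong \Drin_{K^*}(\BB_\qs^*, \BB_\qs)$. Genuinely swapping the roles of $V$ and $V^*$ replaces $\qs$ by its transpose and produces a different Nichols algebra unless $\qs$ is symmetric, so this is not a mechanical check and you should not claim it is. The actual resolution is notational rather than mathematical: the paper uses the two orderings of arguments interchangeably for the \emph{same} Hopf algebra. Example~\ref{expl:br-Drin} explicitly refers the reader to Proposition~\ref{NicholsBD-pres} ``for a presentation in the case that $H$ is a Nichols algebra of diagonal type,'' and Example~\ref{ex:uqg} applies Proposition~\ref{cor:UBqmod} — which is stated for $\Drin_{K^*}(\BB_\qs^*, \BB_\qs)$ — to conclude modularity of $\lmod{\Drin_{K^*}(\BB_\qs, \BB_\qs^*)}$. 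So $\Drin_K(H, H^*)$ in the convention of Example~\ref{expl:br-Drin} and $\Drin_{K^*}(\BB_\qs^*, \BB_\qs)$ in Proposition~\ref{NicholsBD-pres} denote the same object (following \cite{L17}*{Section 3.2}), and your extra ``isomorphism step'' is attempting to prove something that holds by convention, not by an algebra swap. Everything else — the two compatibility checks you flag as ``mechanical'' — is correctly identified as the remaining bookkeeping.
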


\subsection{Modularity of braided Drinfeld doubles of Nichols algebras of diagonal type} \label{sec:braided-mod}

In this section, we present an application of Theorem \ref{thm:ZBCmodular} by providing sufficient conditions for categories of finite-dimensional modules over  $\Drin_{K^*}(\BB_\qs^*,\BB_\qs)$, i.e. over braided Drinfeld doubles of finite-dimensional Nichols algebras of diagonal type, to be modular.

\begin{notation}[$\ell$, ${\bf x}_\ell$, ${\bf i}_\ell$]
\label{not:topdegree}
Continuing Notation~\ref{not:Cq-etc} assume that the Nichols algebra $\BB_\qs$ is finite-dimensional and let $\ell$ be the the top $\mZ$-degree of $\BB_\qs$. Note that $(\BB_\qs)_\ell$ is one-dimensional \cite{AS}*{Lemma~1.12}. We choose a non-zero element ${\bf x}_\ell$ in $(\BB_\qs)_\ell$ and denote its $G$-degree by ${\bf i}_\ell$. 
\end{notation}

\begin{lemma}[{\cite{AA}*{Section~2.12}}]
\label{lem:posroots}
We have that ${\bf i}_\ell=\sum_{\beta\in \Delta_{\qs}^+}(m_\beta-1)\beta \in \Lambda$, 
where $\Delta_{\qs}^+$ is the set of positive roots of the Nichols algebra $\BB_\qs$ and $m_\beta$ is the order of the root of unity  $r(g_\beta,g_\beta)$. 
\qed
\end{lemma}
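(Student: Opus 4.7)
The plan is to deduce the formula for $\mathbf{i}_\ell$ from Kharchenko's PBW theorem for Nichols algebras of diagonal type, together with the fact that the top $\mathbb Z$-degree subspace of $\mathfrak{B}_\qs$ is one-dimensional.

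First, I would recall Kharchenko's PBW theorem \cite{Kha}: the finite-dimensional Nichols algebra $\mathfrak{B}_\qs$ of diagonal type admits a basis consisting of ordered monomials of the form $x_{\beta_1}^{n_1}\cdots x_{\beta_N}^{n_N}$, where $\beta_1,\ldots,\beta_N$ is an enumeration of the positive roots $\Delta_\qs^+$ (with respect to a fixed convex order), each PBW generator $x_\beta$ is homogeneous of $\Lambda$-degree $\beta$ (and hence of $\mathbb Z$-degree equal to the height of $\beta$), and the exponents satisfy $0 \le n_j < m_{\beta_j}$ with $m_{\beta_j}$ equal to the order of the diagonal scalar $r(g_{\beta_j},g_{\beta_j})$ (see \cite{Hec1, AA}).

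Next, I would observe that since each $x_\beta$ raises the $\mathbb Z$-degree by $\height(\beta) \ge 1$, the maximum $\mathbb Z$-degree attained by any PBW monomial is
\begin{equation*}
\ell = \sum_{\beta \in \Delta_\qs^+}(m_\beta - 1)\,\height(\beta),
\end{equation*}
and this maximum is achieved by a unique ordered PBW monomial, namely
\begin{equation*}
{\bf x}_\ell \;:=\; x_{\beta_1}^{m_{\beta_1}-1}\cdots x_{\beta_N}^{m_{\beta_N}-1}.
\end{equation*}
Since the top $\mathbb Z$-degree piece $(\mathfrak{B}_\qs)_\ell$ is one-dimensional \cite{AS}*{Lemma~1.12}, ${\bf x}_\ell$ spans it up to scalar and can be taken as the chosen generator of Notation~\ref{not:topdegree}.

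Finally, $\Lambda$-degrees add under multiplication (the PBW generators are $\Lambda$-homogeneous since they arise as iterated braided commutators of the $x_i$), so the $\Lambda$-degree of ${\bf x}_\ell$ is
\begin{equation*}
\mathbf{i}_\ell \;=\; \sum_{\beta \in \Delta_\qs^+}(m_\beta-1)\,\beta,
\end{equation*}
as claimed. The only subtle point is justifying the identification of the nilpotency indices $m_{\beta}$ of the PBW generators with the orders of $r(g_\beta,g_\beta)$; this is standard for Nichols algebras of diagonal type (see \cite{AA}*{Section~2.12} and the references therein), and is the step where some care is needed to quote the literature correctly.
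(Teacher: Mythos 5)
Your argument is correct and is essentially the standard one; note that the paper gives no proof at all here, merely citing \cite{AA}*{Section~2.12}, so your PBW argument fills in the details that the paper delegates to the literature. The chain of reasoning — Kharchenko's PBW basis with $\Lambda$-homogeneous generators $x_\beta$ of degree $\beta$ and exponent bounds $m_\beta=\ord(r(g_\beta,g_\beta))$ (valid in characteristic zero, which is assumed throughout Section~5), so that the unique maximal-$\mathbb Z$-degree PBW monomial spans $(\BB_\qs)_\ell$ and has $\Lambda$-degree $\sum_\beta(m_\beta-1)\beta$ — is exactly what one finds behind the cited statement. One small observation: the one-dimensionality of $(\BB_\qs)_\ell$ actually falls out of your own uniqueness argument, so invoking \cite{AS}*{Lemma~1.12} is a harmless redundancy rather than an independent ingredient. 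Your flagged caveat about identifying the PBW height $m_\beta$ with the order of $q_{\beta\beta}$ is indeed the only place where a careful citation is needed, and you have located the right sources.
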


\begin{lemma} \label{lem:grouplikes}
Recall \eqref{eq:alpha_H}, \eqref{eq:g_H},
Notation~\ref{not:topdegree}, and the notation of Section~\ref{sec:Nichols}. The distinguished grouplike element for $H:=\BB_\qs \rtimes K^*$ and for $H^*$, respectively, are 
\begin{align*}\label{grouplikes}
    g_H=1\otimes \gamma_{\bf i_\ell},\qquad \alpha_H(x\otimes \delta_{\bf i})=\varepsilon(x)\delta_{{\bf i},-{\bf i}_\ell}.
\end{align*}
\end{lemma}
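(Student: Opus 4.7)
The plan is to derive both formulas from their defining properties in terms of integrals: the distinguished grouplike $\alpha_H \in G(H^*)$ is characterized by $\Lambda h = \alpha_H(h)\Lambda$ for a non-zero left integral $\Lambda$ of $H$, while $g_H \in G(H)$ is extracted from Radford's $S^4$-formula once $\alpha_H$ is known.

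The first step is to identify a non-zero left integral of $H = \BB_\qs \rtimes K^*$. Writing elements of $H$ as $x \otimes \delta_{\bf i}$ and using the bosonization product $(r \otimes a)(s \otimes b) = r(a_{(1)} \cdot s) \otimes a_{(2)}b$, together with the $K^*$-action $\delta_{\bf j} \cdot v = v$ if $v$ is homogeneous of $G$-degree $g_{\bf j}$ and $0$ otherwise, I would verify that
\[
\Lambda \;:=\; {\bf x}_\ell \otimes \delta_{-{\bf i}_\ell}
\]
is a left integral. The verification splits into two checks: $(1 \otimes \delta_{\bf j})\Lambda = \varepsilon(\delta_{\bf j})\Lambda$ follows from $\Delta(\delta_{-{\bf i}_\ell}) = \sum_{{\bf a}+{\bf b}=-{\bf i}_\ell}\delta_{\bf a} \otimes \delta_{\bf b}$ together with the fact that the action picks out the degree ${\bf i}_\ell$ component of ${\bf x}_\ell$; and $(x_i \otimes 1)\Lambda = 0$ follows because $x_i {\bf x}_\ell$ lies in $\mZ$-degree $\ell+1$ of $\BB_\qs$, which vanishes by maximality of $\ell$.

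Given the integral $\Lambda$, I would compute $\Lambda(x \otimes \delta_{\bf i})$ for general $x \in \BB_\qs$ and $\delta_{\bf i} \in K^*$. Using the same product rule and the coproduct of $\delta_{-{\bf i}_\ell}$, one finds
\[
\Lambda(x \otimes \delta_{\bf i}) \;=\; {\bf x}_\ell(\delta_{-{\bf i}_\ell - {\bf i}} \cdot x) \otimes \delta_{\bf i}.
\]
Because ${\bf x}_\ell$ already sits in top $\mZ$-degree, ${\bf x}_\ell \cdot y$ can only be non-zero when $y$ is in $\mZ$-degree $0$, and the degree-zero component of $\delta_{-{\bf i}_\ell - {\bf i}} \cdot x$ is $\varepsilon(x)\,\delta_{{\bf i}, -{\bf i}_\ell} \cdot 1$. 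This gives $\alpha_H(x \otimes \delta_{\bf i}) = \varepsilon(x)\,\delta_{{\bf i}, -{\bf i}_\ell}$, as claimed.

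For $g_H$, I would invoke Radford's $S^4$-formula $S^4(h) = \alpha_H^{-1}(h_{(1)})\,g_H h_{(2)}g_H^{-1}\,\alpha_H(h_{(3)})$ on the algebra generators and verify that $g_H = 1 \otimes \gamma_{{\bf i}_\ell}$ is the unique grouplike in $K^* \subset H$ satisfying it. On $\delta_{\bf j} \in K^*$ both sides equal $\delta_{\bf j}$ trivially. On the generators $x_i$, the coproduct $\Delta(x_i) = x_i \otimes 1 + \gamma_i \otimes x_i$ and $\alpha_H(x_i) = 0$ collapse the right-hand side to $\alpha_H^{-1}(\gamma_i)\,g_H x_i g_H^{-1}$; with $g_H = \gamma_{{\bf i}_\ell}$ and $\alpha_H^{-1}(\gamma_i) = r(g_{{\bf i}_\ell}, g_i)$ one gets $b(g_{{\bf i}_\ell}, g_i)\,x_i$. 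The left-hand side, computed from $S(x_i) = -\gamma_i^{-1}x_i$, is $q_{ii}^{-2}\,x_i$. Hence the whole statement reduces to the identity
\[
b(g_{{\bf i}_\ell}, g_i) \;=\; q_{ii}^{-2}.
\]
Using Lemma~\ref{lem:posroots} to write ${\bf i}_\ell = \sum_{\beta \in \Delta_\qs^+}(m_\beta-1)\beta$, this becomes $\prod_{\beta \in \Delta_\qs^+} b(g_\beta, g_i)^{m_\beta - 1} = q_{ii}^{-2}$. The main obstacle is verifying this last identity; the rank-one case $\BB_\qs = \Bbbk[x]/(x^N)$ reduces immediately to $q^{2(N-1)} = q^{-2}$ via $q^{2N}=1$, and in general it follows from the structural identity $b(g_\beta,g_i)^{m_\beta} = 1$ for every positive root $\beta$ (reflecting that $r(g_\beta, g_\beta)$ has order $m_\beta$), combined with the analogue of the $2\rho$-identity for the generalized root system of a Nichols algebra of diagonal type \cite{Hec1}. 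This last combinatorial step is the crux of the argument and is where the structure theory of Nichols algebras of diagonal type is essential.
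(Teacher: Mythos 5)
Your computation of $\alpha_H$ is correct and is essentially the same as the paper's: both construct the left integral $\Lambda = {\bf x}_\ell \otimes \delta_{-{\bf i}_\ell}$ (the paper writes this as the product $\delta_0 x_\ell$) and read off $\alpha_H$ from right multiplication. Where you diverge is in the derivation of $g_H$: the paper never touches Radford's $S^4$-formula, and instead computes $g_H$ directly from the defining property \eqref{eq:g_H}, $p\lambda = \ev(p\otimes g_H)\lambda$, applied to an explicit right integral $\lambda = x_\ell^*\sum_{\bf i}\delta_{\bf i}^*$ of $H^*$. That route is self-contained and sidesteps any combinatorial identity, whereas yours does not.

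The gap in your route is serious. Your argument reduces to the identity $b(g_{{\bf i}_\ell},g_i)=q_{ii}^{-2}$, but you do not prove it, and the sketch you offer is flawed. The proposed ``structural identity'' $b(g_\beta,g_i)^{m_\beta}=1$ does not follow from $r(g_\beta,g_\beta)$ having order $m_\beta$: in Example~\ref{expl-new} with $\beta$ the simple root $\alpha_1$ one has $q_{11}=-1$, so $m_\beta = 2$, yet $b(g_1,g_2)=q_{12}q_{21}=q$ is a primitive $2n$-th root of unity, so $b(g_1,g_2)^{2}\neq 1$ for $n>1$. (The target identity $b(g_{{\bf i}_\ell},g_i)=q_{ii}^{-2}$ is true, but the paper obtains it in Remark~\ref{rmk:UBqmod}(1) as a \emph{consequence} of this very lemma together with Radford's $S^4$-formula; invoking it here would be circular unless you supply an independent proof.) There is also a smaller logical gap worth flagging: verifying that a candidate grouplike satisfies the $S^4$-formula does not by itself identify it as the distinguished grouplike, since the formula determines $g_H$ only up to a central grouplike. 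This is repairable here---any central grouplike of $H=\BB_\qs\rtimes K^*$ lies in $G(K^*)=\widehat G$ and acts by conjugation on $x_i$ through its value at $g_i$, so centrality forces it to be trivial on the generators of $G$---but you would need to say so. The cleaner and non-circular fix is to follow the paper and compute $g_H$ from the right integral of $H^*$ via \eqref{eq:g_H}.
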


\begin{proof}
We use techniques from \cite{Bur}*{Section 4} by first understanding integrals of $H$. These elements can be built from integrals of $\BB_\qs$ and of $K^*$ as follows. Take a left integral of $\BB_\qs$, i.e., an element $x\in \BB_\qs$ such that $hx=\varepsilon(h)x$ for any $h\in \BB_\qs$. Then for any left integral $k$ of $K^*$, we get that $\Lambda:= (1 \otimes k)(x\otimes 1)= (k_{(1)}\cdot x) \otimes k_{(2)}$ is a left integral of $H =\BB_\qs \rtimes K^*$ \cite{Bur}*{Section 4.6}.

Since $(\BB_\qs)_0=\Bbbk 1$, it follows that $1$ is the only grouplike element of $\BB_\qs$. By self-duality of $\BB_\qs$, we conclude that the distinguished grouplike elements of $\BB_\qs$ and its dual are
$g=1\in \BB_\qs$ and $\alpha=\varepsilon\in \BB_\qs^*$, respectively. Thus, $\Lambda=\delta_{0}x_{\ell}$ is a left integral for $H$ and we compute that 
$$\Lambda x_j=0=\varepsilon(x_j)=\alpha_H(x_j\otimes 1)\Lambda, \qquad \Lambda \delta_{\bf i}=\delta_0\delta_{{{\bf i}+\bf i}_\ell}x_{\ell}=\delta_{{\bf i},-{\bf i}_\ell}\Lambda=\alpha_H(1\otimes \delta_{\bf i})\Lambda,$$
verifying  the claimed formula for $\alpha_H$ on generators. 

To find $g_H$ we observe that the elements $\delta_{\bf i}^*\in H^*$ and $x_j^*\in H^*$ satisfy the relation 
\begin{align*}
    x_j^*\delta_{\bf i}^*&=r(g_{\bf i},g_j)\delta_{\bf i}^*x_j^*, &\forall j=1,\ldots, n, \;{\bf i}\in \Lambda.
\end{align*}
A right integral for $H^*$ is given by $\lambda=x_{\ell}^*\sum_{\bf i}\delta_{\bf i}^*.$ Hence, we compute on generators
\begin{gather*}
    x_j^* \lambda = 0 = \varepsilon(x_j^*)\lambda = \ev(x_j^*, 1\otimes \gamma_{{\bf i}_\ell})\lambda, \qquad \text{for all $j=1,\ldots, n$,}\\ \delta_{\bf j}^*\lambda = \delta_{\bf j}^*\sum_{\bf i}x_{\ell}^*\delta_{\bf i}^*=\sum_{\bf i}r^{-1}(g_{\bf j},g_{-{\bf i}_\ell})x_{\ell}^*\delta_{\bf j}^*\delta_{\bf i}^*=r(g_{\bf j},g_{{\bf i}_\ell})\lambda=\ev(\delta_{\bf j}^*, 1\otimes \gamma_{{\bf i}_\ell})\lambda, \qquad \text{for all ${\bf j}\in \Lambda$}.
\end{gather*}
This computation verifies the claimed formula for $g_H$.
 \end{proof}

See \cite{AA}*{Proposition 2.42} for similar computations for $\BB_\qs\rtimes \Bbbk G$.  
Next, we derive the following conditions for $\lmod{\Drin_{K^*}(\BB_\qs^*,\BB_\qs)}$ to be modular.

\begin{proposition}\label{cor:UBqmod} Recall Notation~\ref{not:Cq-etc} and \ref{not:topdegree}, and Remark \ref{rem:ki}.
 The braided tensor categories  $$\lmod{\Drin_{K^*}(\BB_\qs^*,\BB_\qs)}\simeq \cZ_{\cB_{\qs}}(\lmod{\BB_\qs}(\cB_{\qs}))$$ are modular when
 \begin{itemize}
     \item[\textnormal{(i)}] the symmetric bilinear form $b$ on $K$ is non-degenerate, and
     \item[\textnormal{(ii)}] there exist ${\bf j},{\bf a} \in \Lambda$ such that $2{\bf j}={\bf i}_\ell$, $2{\bf a}={\bf i}_\ell$, $b(g_i,g_{\bf a})^2=r(g_i,g_{{\bf i}_\ell})$, and $r(g_{\bf j},g_i) b(g_i,g_{\bf a})=q_{ii}^{-1}$ for all $i=1,\ldots, n$.
\end{itemize}
\end{proposition}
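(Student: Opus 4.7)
The plan is to verify the hypotheses of Theorem~\ref{thm:ZBCmodular} with $\cB = \cB_{\qs}$ and $\cC = \lmod{\BB_\qs}(\cB_{\qs})$, so that the resulting relative center is identified with $\lmod{\Drin_{K^*}(\BB_\qs^*, \BB_\qs)}$ via Proposition~\ref{prop:Drincenter}. Three hypotheses are essentially free: Example~\ref{exmpl:br-Hopf} supplies the $\cB_{\qs}$-central structure on $\cC$ and shows that the image of the functor $\ov{\cB_{\qs}} \to \cZ(\cC)$ is topologizing, while non-degeneracy of $\cB_{\qs}$ is precisely condition~(i) via Lemma~\ref{lem:Bq-nondeg}. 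The technical core is to show $\mathsf{Sqrt}_\cC(D,\xi_{D}) \neq \varnothing$, which by Proposition~\ref{prop:KRvsShi} amounts to exhibiting a Kauffman--Radford pair $(\zeta, a)$ as in Theorem~\ref{thm:KR} for the finite-dimensional Hopf algebra $H := \BB_\qs \rtimes K^*$ underlying $\cC$.

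From Lemma~\ref{lem:grouplikes} the distinguished grouplikes are $g_H = 1\otimes\gamma_{{\bf i}_\ell}$ and $\alpha_H(x\otimes\delta_{\bf i}) = \varepsilon(x)\delta_{{\bf i}, -{\bf i}_\ell}$. Since $\BB_\qs$ has only the trivial grouplike, both $G(H)$ and $G(H^*)$ identify with characters of $G$; for ${\bf m}\in\Lambda$ I write $\hat g_{\bf m}$ for the corresponding algebra map $H\to\Bbbk$ that kills $\BB_\qs^+$ and evaluates $K^*$ at $g_{\bf m}$. Using ${\bf j}$ and ${\bf a}$ from condition~(ii), the candidate pair is
\[
\zeta := \hat g_{-{\bf j}}, \qquad a := 1 \otimes k_{\bf a},
\]
where $k_{\bf a} = \gamma_{\bf a}\ov{\gamma}_{\bf a}\in K^*$ is the character $g_{\bf k} \mapsto b(g_{\bf k}, g_{\bf a})$ from Remark~\ref{rem:ki}. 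Then $\zeta^2 = \alpha_H$ is immediate from $2{\bf j}={\bf i}_\ell$. For $a^2 = g_H$, I expand $b$ bimultiplicatively in the first equation of~(ii) to read off $r(g_{{\bf i}_\ell}, g_i) = 1$ for all $i$, i.e.\ $\ov{\gamma}_{{\bf i}_\ell} = 1$; together with $2{\bf a} = {\bf i}_\ell$ this yields $a^2 = \gamma_{{\bf i}_\ell}\ov{\gamma}_{{\bf i}_\ell} = \gamma_{{\bf i}_\ell} = g_H$.

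It remains to check the Kauffman--Radford identity $S^2(h) = \zeta^{-1}(h_{(1)}) \, a\, h_{(2)} \, a^{-1}\, \zeta(h_{(3)})$ on algebra generators. For $h = \delta_{\bf k}$ both sides equal $\delta_{\bf k}$: $a$ commutes with $K^*$, and the characters $\zeta^{-1}$ and $\zeta$ pin ${\bf a}_1 = {\bf j}$ and ${\bf a}_3 = -{\bf j}$ in the triple coproduct $\sum\delta_{{\bf a}_1}\otimes\delta_{{\bf a}_2}\otimes\delta_{{\bf a}_3}$. For $h = x_i$, the formula $S(x_i) = -\gamma_i^{-1}x_i$ together with the commutation $\gamma_i x_i = q_{ii}\,x_i \gamma_i$ gives $S^2(x_i) = q_{ii}^{-1}x_i$, while iterating $\Delta(x_i) = x_i\otimes 1 + \gamma_i\otimes x_i$ and using $\zeta(x_i) = \zeta^{-1}(x_i) = 0$ kills two of the three summands on the right, leaving $\zeta^{-1}(\gamma_i)\cdot (a x_i a^{-1}) = r(g_{\bf j}, g_i)\,b(g_i, g_{\bf a})\,x_i$. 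The two sides coincide exactly by the second equation of condition~(ii), completing the construction of the pair.

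The main obstacle is locating the correct grouplike $a$: the naive guess $a = 1\otimes \gamma_{\bf a}$ produces the unsymmetric relation $r(g_{\bf j}, g_i)\,r(g_i, g_{\bf a}) = q_{ii}^{-1}$ and an orphaned requirement $\ov{\gamma}_{{\bf i}_\ell}=1$. Passing instead to $k_{\bf a} = \gamma_{\bf a}\ov{\gamma}_{\bf a}$ simultaneously produces the bilinear form $b$ in the relation and packages the vanishing $\ov{\gamma}_{{\bf i}_\ell}=1$ into the first clause of condition~(ii); once this substitution is identified, the remaining verifications are routine bookkeeping with bosonization formulas. Combining the constructed pair with Proposition~\ref{prop:KRvsShi} and then applying Theorem~\ref{thm:ZBCmodular} delivers the modularity of $\cZ_{\cB_{\qs}}(\cC) \simeq \lmod{\Drin_{K^*}(\BB_\qs^*,\BB_\qs)}$.
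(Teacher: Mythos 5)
Your proposal is correct and follows essentially the same route as the paper: invoke Theorem~\ref{thm:ZBCmodular} with $\cC=\lmod{\BB_\qs}(\cB_\qs)$, use Lemma~\ref{lem:Bq-nondeg} for hypothesis~(i), and exhibit the Kauffman--Radford pair $(\zeta,a)=(\hat g_{-\mathbf{j}},1\otimes k_{\mathbf a})$ via Lemma~\ref{lem:grouplikes} and Proposition~\ref{prop:KRvsShi} for hypothesis~(ii). The only cosmetic difference is that you route the check $a^2=g_H$ through the intermediate observation $r(g_{\mathbf{i}_\ell},g_i)=1$ (hence $\ov{\gamma}_{\mathbf{i}_\ell}=1$), whereas the paper computes $k_{\mathbf a}^2=\sum_{\mathbf i}b(g_{\mathbf i},g_{\mathbf a})^2\delta_{\mathbf i}=\gamma_{\mathbf{i}_\ell}$ directly; both rely on the same bimultiplicativity and condition~(ii).
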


\begin{proof}
To show that $\cZ_{ \cB_{\qs}}(\lmod{\BB_\qs}(\cB_{\qs}))$ is modular,  it suffices to check that (a) the braided finite tensor category $\cB_{\qs}$ is non-degenerate and that (b) the set ${\sf Sqrt}_{\lmod{\BB_\qs}(\cB_{\qs})}(D, \xi_{D})$ is non-empty, by Theorem~\ref{thm:ZBCmodular}. Then, the equivalent category, $\lmod{\Drin_{K^*}(\BB_\qs^*,\BB_\qs)}$ [Proposition~\ref{prop:Drincenter}], is also modular. 
Now (a) follows from (i) using Lemma \ref{lem:Bq-nondeg}.

Now we show that (ii) implies (b). Take $H:=\BB_\qs\rtimes K^*$, a finite-dimensional Hopf algebra with $\lmod{H} \simeq \lmod{\BB_\qs}(\cB_{\qs})$. By~(ii), we can define
\begin{align*}
    a=1\otimes k_{\bf a},\qquad \zeta(x\otimes \delta_{\bf i})=\varepsilon(x)\delta_{{\bf i},-{\bf j}},
\end{align*} and compute using Proposition \ref{NicholsBD-pres} that
\begin{gather*}
a^2=k_{\bf a}^2=\sum_{{\bf i},{\bf l}\in \Lambda}b(g_{\bf i},g_{\bf a})b(g_{\bf l},g_{\bf a})\delta_{\bf i}\delta_{\bf l}=\sum_{{\bf i}\in \Lambda}b(g_{\bf i},g_{\bf a})^2\delta_{\bf i}=\sum_{{\bf i}\in \Lambda}r(g_{\bf i},g_{{\bf i}_\ell})\delta_{\bf i}=\gamma_{{\bf i}_\ell}=g_H, \\\zeta^2(\delta_{\bf i})=(\zeta\otimes \zeta)(\Delta(\delta_{\bf i}))=\sum_{{\bf k}+{\bf l}={\bf i}}\zeta(\delta_{\bf k})\zeta(\delta_{\bf l})=\sum_{{\bf k}+{\bf l}={\bf i}}\delta_{{\bf k},-{\bf j}}\delta_{{\bf l},-{\bf j}}=\delta_{{\bf i},-{\bf i}_\ell}=\alpha_H(\delta_{\bf i}),\\ \zeta^2(x_i)=\zeta(x_i)\zeta(1)+\zeta(\gamma_i) \zeta(x_i)=0=\varepsilon(x_i)=\alpha_H(x_i),
\end{gather*}
for all $i=1,\ldots, n$ and ${\bf i}\in \Lambda$.
Here, $\alpha_H$ and $g_H$ are the distinguished grouplike elements of $H^*$ and $H$, respectively, of  Lemma~\ref{lem:grouplikes}. For these elements, again using Proposition \ref{NicholsBD-pres}, we see that 
\begin{gather*}
S^2(x_i) = \gamma_i^{-1}x_i\gamma_i=q_{ii}^{-1}x_i,\\
\zeta^{-1}((x_i)_{(1)})a(x_i)_{(2)}a^{-1}\zeta((x_i)_{(3)})=\zeta^{-1}(\gamma_i)ax_ia^{-1}\zeta(1)=
r(g_{\bf j},g_i) b(g_i,g_{\bf a}) x_i,
\end{gather*}
for all $i$. Here, $\gamma_i$ is from Notation \ref{not:Cq-etc} and  $\zeta^{-1}(\delta_{\bf i})=\delta_{{\bf i},{\bf j}}$.  Using that $r(g_{\bf j},g_i) b(g_i,g_{\bf a})=q_{ii}^{-1}$ from~(ii), we conclude that condition~\eqref{eq:S2}
holds for $h=x_i$. This equation is evident for $h\in G$ since $G$ is an abelian group and hence holds for all $h\in H$ using that $S^2$ is an algebra morphism. 
Thus, applying Proposition~\ref{prop:KRvsShi} with the above elements $a$ and $\zeta$ yields (b). 
\end{proof}

\begin{remark}\label{rmk:UBqmod}
 \begin{enumerate}
     \item 
  As a consequence of Radford's $S^4$-formula \cite{Rad76} for the Hopf algebra $H = \BB_\qs\rtimes K^*$, we obtain that the values $\qs=(q_{ij})$ satisfy
\begin{align*}
    r(g_{{\bf i}_\ell},g_i) r(g_i,g_{{\bf i}_\ell})=q_{ii}^{-2}.
\end{align*}
 \item Similar conditions as in Proposition \ref{cor:UBqmod}(ii) were already derived in \cite{AA}*{Proposition~2.42} to determine when $\Drin(\BB_\qs\rtimes \Bbbk G)$ is a ribbon Hopf algebra.
 
 \smallskip
 
 \item Using Proposition \ref{prop:Hspherical}, we derive necessary and sufficient conditions for $\cC=\lmod{\BB_\qs}(\cB_\qs)$ to be spherical. The monoidal category
$$\lmod{\BB_\qs}(\cB_\qs)\simeq \lmod{\BB_\qs\rtimes K^*}$$
is spherical if and only if ${\bf i}_\ell=0$ and there exist ${\bf b}, {\bf c}\in \Lambda$ such that for $a:=\gamma_{\bf b}\ov{\gamma}_{\bf c}$,
$$
    a^2=1, 
    \qquad r(g_i,g_{\bf b})r(g_{\bf c},g_i)=q_{ii}^{-1} \;\; \text{for all $i=1,\ldots, n$}.
$$
In this case, $\mathsf{SPiv}(H)$ is given by these elements $a$.
See Example \ref{expl-new} for a class of examples of spherical categories obtained this way.
 \end{enumerate}
 \end{remark}

\subsection{Small quantum groups and other examples} \label{ex:smallqg}
In this subsection, we include some specific examples and demonstrate how Theorem \ref{thm:ZBCmodular} and Proposition~\ref{cor:UBqmod} lead to examples of non-semisimple modular categories. 

\begin{example}[The small quantum group $u_q(\fr{g})$] \label{ex:uqg}
 Take $q$ a root of unity of odd order $l \geq 3$  and let $\mathfrak{g}$ be the semisimple Lie algebra of rank~$t$, associated to the irreducible symmetrizable Cartan matrix $(a_{ij})_{i,j=1}^{t}$. We choose coprime integers $d_i = 1,2,3$ so that $(d_i a_{ij})$ is a symmetric matrix. 
 Associated to this data, one defines a finite-dimensional Hopf algebra $u_q(\fr{g})$, the \emph{small quantum group} (or \emph{Frobenius--Lusztig  kernel}) as, e.g., in \cite{Ro}*{Section~3.2}.\footnote{When $\fr{g}$ is of type $G_2$, assume $l$ is coprime to $3$;  $d_i=3$ only appears in this case.} The Hopf algebras $u_q(\mathfrak{g})$ generated by group-like elements $k_i^{\pm 1}$, $(k_i,1)$-skew primitive elements $e_i$, and $(1,k_i^{-1})$-skew primitive elements $f_i$ for $i = 1,\dots t$,  subject to relations:
\begin{gather*}
k_i^l = 1, \quad \quad e_i^l = 0, \quad \quad f_i^l = 0,\\
k_ie_j = q^{d_i a_{ij}} e_j k_i, \quad \quad
k_if_j = q^{-d_i a_{ij}} f_j k_i, \quad \quad
e_if_j - f_j e_i = \delta_{i,j}(k_i - k_i^{-1})(q^{d_i} - q^{-d_i})^{-1},\\
\sum_{m=0}^{1-a_{ij}}(-1)^m \binom{1-a_{ij}}{m}_{q^{d_i}}\; e_i^{1-a_{ij}-m} e_j e_i^m = 0, \quad \quad \sum_{m=0}^{1-a_{ij}}(-1)^m \binom{1-a_{ij}}{m}_{q^{d_i}} \; f_i^{1-a_{ij}-m} f_j f_i^m = 0,
\end{gather*}
for the abelian group $K := \langle k_1, \dots, k_{t}\rangle \cong \mathbb{Z}_l^{\times t}$. 

The above datum also defines a Nichols algebra of diagonal type by setting
$q_{ij}:=q^{d_ia_{ij}}$. The braiding given by $\qs$ is of \emph{Cartan type}, i.e. satisfies 
$q_{ij}q_{ji}=q_{ii}^{a_{ij}},$ for all $i,j=1,\ldots,t$. Moreover, the associated Nichols algebra $\BB_\qs$ is isomorphic to $u_q(\fr{n}_+)$, the positive part of the small quantum group associated to $\fr{g}$, and the braided Drinfeld double $\Drin_{K^*}(\BB_\qs,\BB_\qs^*)$ is isomorphic to $u_q(\fr{g})$, both via
$$\gamma_i=\ov{\gamma}_i \mapsto k_i, \quad x_i \mapsto e_i, \quad y_i\mapsto k_if_i(q^{-d_i}-q^{d_i}).$$
See, e.g., \cite{Lusztig}, \cite{AS}*{Theorem 4.3}, and references therein, for the isomorphism of $\BB_\qs$ and $u_q(\fr{n}_+)$, along with \cite{Som}*{Section 5.10}, \cite{Maj99}*{Proposition~4.3}, \cite{L18}*{Theorem~4.9} for the isomorphisms of the braided Drinfeld doubles.

\smallskip

We obtain that the category $\lmod{\Drin_{K^*}(\BB_\qs,\BB_\qs^*)} \simeq \lmod{u_q(\fr{g})}$ is modular by applying Proposition \ref{cor:UBqmod} as follows. First,  the pairing $r\colon G\times G\to \Bbbk$ obtained from $\qs$ as in Notation \ref{not:Cq-etc} is non-degenerate using a computation as in \cite{Ro}*{proof of Proposition 3.5},  assuming that the determinant of $(d_ia_{ij})$ is coprime to $l$.
This implies that $G$ is isomorphic to the group $ \langle\gamma_1,\ldots,\gamma_r \rangle\subseteq \Bbbk G^*$. Further, $r$ is symmetric and the associated bilinear form $b$ is given as its square. Thus, as all $q^{d_ia_{ij}}$ are primitive $l$-th roots of unity, with $l$ odd, the same holds for $q^{2d_ia_{ij}}$ and $b$ is also non-degenerate. 
Therefore, Proposition \ref{cor:UBqmod}(i) holds.
Moreover, the proof of \cite{Bur}*{Theorem~5.4} contains a computation that shows that  Proposition \ref{cor:UBqmod}(ii) holds for this class of examples.\footnote{In fact, Burciu denotes $\delta=\zeta, h=a, \chi_i(g)=r(g_i,g)$ and verifies the required equation $\delta^{-1}(g_i)\chi(h)=\chi_i(g_i)^{-1}$ using the Lie theoretic computation that, writing the $j$-th positive root as $\beta_j=\sum_{s=1}^t c_{js}\alpha_s$, we have $\sum_j\sum_s a_{is}c_{js}=2$.} 
Therefore, Proposition \ref{cor:UBqmod} implies that $\lmod{\Drin_{K^*}(\BB_\qs,\BB_\qs^*)}$ is a modular tensor category. 

\smallskip

Continuing Example~\ref{ex:decomp1}, we have a decomposition of modular categories:
$$\lmod{\Drin(u_q(\fr{b}_+))}\simeq \cB_{\qs}\boxtimes \lmod{u_q(\fr{g})},$$
where $u_q({\fr{b}}_+)$ is the positive Borel part of $u_q(\fr{g})$ generated by the $e_i$ and $k_i$.
\end{example}

\medskip

Finally, we produce an   example of a relative monoidal center that gives a modular category that is \emph{not} of the form $\lmod{u_q(\fr{g})}$. It consists of modules over a more general type of quantum group, namely, modules over the braided Drinfeld double of a Nichols algebra that is \emph{not} of Cartan type.

\begin{example}\label{expl-new}
Let $q \in \Bbbk$ be a primitive $2n$-th root of unity, for $n\geq 1$ an odd integer. We denote by $G$ the abelian group $\langle g_1,g_2\rangle=\mZ_{2n} \times \mZ_{2n}$. 
Consider the Nichols algebra $\BB_\qs$ of diagonal type determined by $\qs=(q_{ij})$, with
$$q_{11}=q_{22}=-1,\qquad q_{12}=1, \qquad q_{21}=q,$$
as in Section \ref{sec:Nichols}. This Nichols algebra appears in the classification of \cite{Hec2}*{Table 1, Row 2}. It has by \cite{Kha} a PBW basis given by the set
$\left\{x_1^{a_1}x_2^{a_2}x_{12}^{a_{12}}\; \big|\; 0\leq a_1,a_2< 2, \;0\leq a_{12}< 2n\right\},$
where $x_{12}:=x_1x_2-x_2x_1$, and is thus  $8n$-dimensional. Note that $\BB_\qs$  is generated by $x_1,x_2$ subject to the relations 
$$x_1^2=0,\qquad x_2^2=0, \qquad x_{12}^n=0,$$
see \cite{AA}*{Section 5.1.11}, and is one of the Nichols algebras of super type $A(1|1)$.

In this example, the symmetric bilinear form $b$ from Notation \ref{not:Cq-etc} is given by $b(g_i,g_j) = q_{ji}q_{ij}$, so
$$b(g_1,g_1^{c_1}g_2^{c_2})=b(g_1,g_1)^{c_1}b(g_1,g_2)^{c_2}=q^{c_2},\quad b(g_2,g_1^{c_1}g_2^{c_2})=b(g_2,g_1)^{c_1}b(g_2,g_2)^{c_2}=q^{c_1},~~ \text{for all $c_1,c_2$.}$$
 Hence, $g_1^{c_1}g_2^{c_2}$ is in the radical of $b$ if and only if $c_1,c_2=0 \mod 2n$ since $q$ is a primitive $2n$-th root of unity. Thus, $b$
is non-degenerate and Proposition~\ref{cor:UBqmod}(i) holds by Lemma \ref{lem:Bq-nondeg}.

\smallskip

The Hopf algebra $\Drin_{K^*}(\BB_\qs,\BB_\qs^*)$ may be presented as the Hopf algebra generated by $x_i, y_i,$ and $k_i:=\gamma_i\ov{\gamma}_i$ for $i=1,2$, subject to relations, for $i,j=1,2$, $i\neq j$,
\begin{gather*}
    k_i x_i=x_i k_i, \qquad k_i y_i=y_i k_i,\qquad k_ix_j=qx_j k_i, \qquad k_iy_j=q^{-1}y_j k_i, \qquad x_iy_j+y_jx_i=\delta_{i,j}(1-k_i),\\
x_i^2=y_i^2=0,\qquad k_i^{2n}=1,\qquad  (x_1x_2-x_2x_1)^{2n}=(y_2y_1-y_1y_2)^{2n}=0,\\
    \Delta(x_1)=x_1\otimes 1+k_2^n\otimes x_1, \qquad \Delta(x_2)=x_2\otimes 1+ k_1^nk_2\otimes x_2,\\
    \Delta(y_1)=y_1\otimes 1+k_1k_2^n\otimes y_1, \qquad \Delta(y_2)=y_2\otimes 1+ k_1^n\otimes y_2.
\end{gather*}
Here, we use that $\gamma_1=k_2^n$,\; $\gamma_2=k_1^nk_2$,\; $\ov{\gamma}_1=k_1k_2^n$,\; and $\ov{\gamma}_2=k_1^n$. Note that $\langle \gamma_1,\gamma_2 \rangle$ and $\langle \ov{\gamma}_1,\ov{\gamma}_2 \rangle$ are both proper subgroups of $\langle k_1,k_2 \rangle$.

\smallskip

The top $\mZ$-degree element $x_{{\bf i}_\ell}$ is $x_1x_2x_{12}^{2n-1}$ and has $G$-degree ${\bf i}_\ell=(2n,2n)=0\in \Lambda=\mZ_{2n}\times \mZ_{2n}$. There are four pairs ${\bf j}=(j_1,j_2)$, ${\bf a}=(a_1,a_2)\in \Lambda$ satisfying the conditions from Proposition~\ref{cor:UBqmod}(ii):
\begin{center}
(1)\; ${\bf j}=(0,0)$, ${\bf a}=(n,n)$; \qquad \qquad
(2)\; ${\bf j}=(n,0)$, ${\bf a}=(n,0)$;\\
\,(3)\; ${\bf j}=(0,n)$, ${\bf a}=(0,0)$; \qquad \qquad
\,(4)\; ${\bf j}=(n,n)$, ${\bf a}=(0,n)$;
\end{center}
It is clear that in all cases (1)--(4), $2{\bf j}=0={\bf i}_\ell$ and that $k_{\bf a}=k_1^{a_1}k_2^{a_2}$ squares to $g_H=\gamma_0=1$, so that $b(g_i,g_{\bf a})^2=r(g_i,g_{{\bf i}_\ell})=1$ for all $i=1,2$. The remaining condition of  Proposition \ref{cor:UBqmod}(ii) is verified using explicit computation. 
In Case (1), for instance, we compute that
\begin{align*}
   r(g_{\bf j},g_1)b(g_1,g_{\bf a})= r(g_{(0,0)},g_1)b(g_1,g_{(n,n)})&=q_{11}^{2n}(q_{12}q_{21})^n=(-1)^{2n}q^n=-1=q_{11}^{-1},
\end{align*}
using $q^n=-1$. Similarly,
\begin{align*}
r(g_{\bf j},g_1)b(g_1, g_{\bf a}) =r(g_{(0,0)},g_2) b(g_2,g_{(n,n)})&=q_{22}^{2n}(q_{21}q_{12})^n=(-1)^{2n}q^n=-1=q_{22}^{-1}.
\end{align*}
Thus, the set $\mathsf{Sqrt}_{\cC}(D,\xi_D)$ contains four distinct elements, which yields four different ribbon structures on  $\lmod{\Drin_{K^*}(\BB_\qs,\BB_\qs^*)}$ by the proof of Theorem~\ref{thm:ZBCmodular} (using Theorem~\ref{thm:Shimizuribbon}). In each case, this category is modular by 
Proposition~\ref{cor:UBqmod}.

\smallskip

Moreover, to study the sphericality of the category $\cC:=\lmod{\BB_\qs}(\cB_\qs)$, we apply Remark~\ref{rmk:UBqmod}(3), and show that only  Case~(1) yields a spherical structure. For this, we set ${\bf b}=(1,0)$, ${\bf c}=(0,1)$, and get that $a=\gamma_{\bf b}\ov{\gamma}_{\bf c}=\gamma_1\ov{\gamma}_2=k_1^nk_2^n$. Then $a^2=k_1^{2n}k_2^{2n}=1$ and the equations $r(g_i,g_{\bf b})r(g_{\bf c},g_i)=q_{ii}^{-1}$, for $i=1,2$ follow similarly to above, thus satisfying the conditions in Remark~\ref{rmk:UBqmod}(3). Note that $a$ here is the same as the one obtained in Case (1); thus, in this case, $\cC$ is spherical in the sense of Definition~\ref{def:spherical}.
Finally, Cases (2)--(4) do not yield spherical structures on $\cC$. Indeed, by Proposition~\ref{prop:Hspherical}, having $a \in \mathsf{SPiv}(H)$ is equivalent to $(\zeta, a)$ belonging to the set of Theorem~\ref{thm:KR} with $\zeta = \varepsilon$. The pairs $({\bf j}, {\bf a})$ above correspond to pairs $(\zeta, a)$ in Theorem~\ref{thm:KR}, with $a = k_{\bf a}$ and $\zeta$  obtained from ${\bf j}$ via $\zeta(x\otimes \delta_{\bf i})=\varepsilon(x)\delta_{{\bf i},{\bf j}}$. To get that $\zeta = \varepsilon$, we need that ${\bf j} = 0$ which only occurs in Case~(1). 
\end{example}


\section*{Acknowledgements} 

The authors would like to thank the referees for their detailed suggestions that greatly improved the exposition of this work. The authors also thank Iv\'an Angiono, Alexei Davydov, J\"urgen Fuchs, and Christoph Schweigert for interesting exchanges related to this ongoing project, 
Kenichi Shimizu for helpful exchanges about his work \cite{Shi2} discussed in Section~\ref{sec:centerribbon}.

R.~Laugwitz was partially supported by an AMS-Simons travel grant and a Nottingham Research Fellowship. C. Walton was partially supported by a research fellowship from the Alfred P. Sloan foundation, and by the US National Science Foundation grant \#DMS-1903192. Part of the work for this project was carried out during a visit of R.~L. to UIUC; the hospitality of the hosting institution is gratefully acknowledged.

\bigskip

\bibliography{lm-qea-bib}
\bibliographystyle{amsrefs}

\end{document}